\newtheorem{lemma}{Lemma}[section]
\newtheorem{theorem}[lemma]{Theorem}
\newtheorem{corollary}[lemma]{Corollary}
\newtheorem{proposition}[lemma]{Proposition}
\theoremstyle{definition}
\newtheorem{definition}[lemma]{Definition}
\theoremstyle{remark}
\newtheorem{remark}[lemma]{Remark}
\newcommand{\C}{\mathbb{C}}
\newcommand{\N}{\mathbb{N}}
\newcommand{\R}{\mathbb{R}}
\newcommand{\cC}{\mathcal{C}}
\newcommand{\cH}{\mathcal{H}}
\newcommand{\cM}{\mathcal{M}}
\newcommand{\cR}{\mathcal{R}}
\DeclareMathOperator{\re}{Re}
\DeclareMathOperator{\im}{Im}
\DeclareMathOperator{\Int}{int}
\DeclareMathOperator{\per}{Per}
\renewcommand{\epsilon}{\varepsilon}
\renewcommand{\phi}{\varphi}
\renewcommand{\theta}{\vartheta}
\DeclareMathOperator{\Deg}{deg}
\DeclareMathOperator{\Diff}{Diff}
\DeclareMathOperator{\Poly}{Poly}
\title[Discontinuity of straightening]{Discontinuity of straightening in anti-holomorphic dynamics: II\\}
\author[H.~Inou]{Hiroyuki Inou}
\address{Department of Mathematics, Kyoto University, Kyoto 606-8502, Japan}
\email{inou@math.kyoto-u.ac.jp}
\author[S.~Mukherjee]{Sabyasachi Mukherjee}
\address{School of Mathematics, Tata Institute of Fundamental Research, 1 Homi Bhabha Road, Mumbai 400005, India}
\email{sabya@math.tifr.res.in} 
\date{\today}
\begin{document}

\begin{abstract}
In \cite{M3}, Milnor found Tricorn-like sets in the parameter space of real cubic polynomials. We give a rigorous definition of these Tricorn-like sets as suitable renormalization loci, and show that the dynamically natural straightening map from such a Tricorn-like set to the original Tricorn is discontinuous. We also prove some rigidity theorems for polynomial parabolic germs, which state that one can recover unicritical holomorphic and anti-holomorphic polynomials from their parabolic germs.

\end{abstract}

\maketitle

\setcounter{tocdepth}{1}

\tableofcontents

\section{Introduction}

The Tricorn, which is the connectedness locus of quadratic anti-holomorphic polynomials, is the anti-holomorphic counterpart of the Mandelbrot set. 

\begin{definition}[Multicorns]
The \emph{multicorn} of degree $d$ is defined as $$\mathcal{M}^{\ast}_d = \lbrace c \in \mathbb{C} : K(f_c)\ \mathrm{is\ connected}\rbrace,$$ where $K(f_c)$ is the filled Julia set of the unicritical anti-holomorphic polynomial $f_c(z)=\overline{z}^d+c$. The multicorn of degree $2$ is called the \emph{Tricorn}.
\end{definition} 

While it follows from classical works of Douady and Hubbard that baby Mandelbrot sets are homeomorphic to the original Mandelbrot set via dynamically natural straightening maps \cite{DH2}, it was shown in \cite{IM4} that straightening maps from Tricorn-like sets appearing in the Tricorn to the original Tricorn are discontinuous at infinitely many parameters. 

The dynamics of quadratic anti-holomorphic polynomials and its connectedness locus, the Tricorn, was first studied in \cite{CHRS}, and their numerical experiments showed major structural differences between the Mandelbrot set and the Tricorn. Nakane proved that the Tricorn is connected, in analogy to Douady and Hubbard's classical proof of connectedness of the Mandelbrot set \cite{Na1}. Later, Nakane and Schleicher, in \cite{NS}, studied the structure of hyperbolic components of the multicorns, and Hubbard and Schleicher \cite{HS} proved that the multicorns are not pathwise connected. Recently, in an attempt to explore the topological aspects of the parameter spaces of unicritical anti-holomorphic polynomials, the combinatorics of external dynamical rays of such maps were studied in \cite{Sa} in terms of orbit portraits, and this was used in \cite{MNS} where the bifurcation phenomena and the structure of the boundaries of hyperbolic components were described. The authors showed in \cite{IM} that many parameter rays of the multicorns non-trivially accumulate on persistently parabolic regions. For a brief survey of anti-holomorphic dynamics and associated parameter spaces (in particular, the Tricorn), we refer the readers to \cite[\S 2]{LLMM2}.

It was Milnor who first identified multicorns as prototypical objects in parameter spaces of \emph{real} rational maps (here, a rational map if called \emph{real} if it commutes with an anti-holomorphic involution of the Riemann sphere) \cite{M3,M4}. In particular, he found Tricorn-like sets in the connectedness locus of real cubic polynomials. One of the main goals of the current paper is to explain the appearance of Tricorn-like sets in the real cubic locus, and to prove that straightening maps from these Tricorn-like sets to the original Tricorn is always discontinuous. 

Let us now briefly illustrate how quadratic anti-polynomial-like behavior can be observed in the dynamical plane of a real cubic polynomial (see \cite[Definition~5.1]{IM4} for the definition of anti-quadratic-like maps). According to \cite{M3}, a hyperbolic component $H$ in the parameter space of cubic polynomials is said to be \emph{bitransitive} if each polynomial $p$ in $H$ has a unique attracting cycle (in $\C$) such that the two distinct critical points of $p$ lie in two different components of the immediate basin of the attracting cycle. In particular, associated to each bitransitive component $H$ there are two positive integers $n_1, n_2$ such that if $p$ is the center of $H$ with (distinct) critical points $c_1$ and $c_2$, then $p^{\circ n_1}(c_1)=c_2$ and $p^{\circ n_2}(c_2)=c_1$. Now let 
$$
p(z) = -z^3-3a_0^2z+b_0,\ a_0\geq 0, b_0 \in \mathbb{R},
$$ 
be a real cubic polynomial that is the center of a bitransitive hyperbolic component. The two critical points $c_1:=ia_0$ and $c_2:=-ia_0$ of $p$ are complex conjugate, and have complex conjugate forward orbits. Thus, the assumption that $p$ is the center of a bitransitive hyperbolic component implies that there exists an $n \in \mathbb{N}$ such that $p^{\circ n}(c_1) = c_2,\ p^{\circ n}(c_2) = c_1$ (compare Figure~\ref{real cubic}). Suppose $U$ is a neighborhood of the closure of the Fatou component containing $c_1$ such that $p^{\circ 2n}: U\rightarrow p^{\circ 2n}(U)$ is polynomial-like of degree $4$. Then we have, $\iota(\overline{U}) \subset p^{\circ n}(U)$ (where $\overline{U}$ is the topological closure of $U$, and $\iota$ is the complex conjugation map), i.e.,\ $\overline{U} \subset (\iota \circ p^{\circ n})(U)$. Therefore $\iota \circ p^{\circ n}:U\rightarrow (\iota \circ p^{\circ n})(U)$ is a proper anti-holomorphic map of degree $2$, hence an anti-polynomial-like map of degree $2$ (with a connected filled Julia set) defined on $U$. An anti-holomorphic version of the straightening theorem \cite[Theorem~5.3]{IM4} now yields a quadratic anti-holomorphic map (with a connected filled Julia set) that is hybrid equivalent to $(\iota \circ p^{\circ n})\vert_U$. One can continue to perform this renormalization procedure as the real cubic polynomial $p$ moves in the parameter space, and this defines a map from a suitable region in the parameter plane of real cubic polynomials to the Tricorn. We will define these Tricorn-like sets rigorously as suitable renormalization loci $\mathcal{R}(a_0,b_0)$, and will define the dynamically natural `straightening map' from $\mathcal{R}(a_0,b_0)$ to the Tricorn $\mathcal{M}_2^*$ in Section~\ref{proof_main_thm}.  

\begin{theorem}[Discontinuity of Straightening in Real Cubics]\label{Straightening_discontinuity_2}
The straightening map $\chi_{a_0,b_0}: \mathcal{R}(a_0,b_0) \to \mathcal{M}_2^*$ is discontinuous (at infinitely many explicit parameters).
\end{theorem}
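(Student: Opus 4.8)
The plan is to argue by contradiction at an explicit infinite family of parabolic parameters of odd renormalization period, adapting to the new ambient space the strategy of \cite{IM4}. The argument rests on three ingredients: (i) a description of the local bifurcation structure of $\mathcal{R}(a_0,b_0)$ near such parameters; (ii) the rigidity theorems for polynomial parabolic germs announced in the abstract; and (iii) the non-landing (``wiggling'') of umbilical cords of period-doubled hyperbolic components, now taking place in the real cubic parameter space. First I would transport to $\mathcal{R}(a_0,b_0)$ the bifurcation theory of the multicorns: each hyperbolic component of $\mathcal{R}(a_0,b_0)$ of odd renormalization period $k$ has a boundary built from finitely many parabolic arcs and cusps; each parabolic arc carries a real-analytic critical \'Ecalle height coordinate; and $\chi_{a_0,b_0}$ restricted to such an arc is a homeomorphism onto the corresponding parabolic arc of $\mathcal{M}_2^*$ that preserves the critical \'Ecalle height. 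Thus $\chi_{a_0,b_0}$ is continuous along the parabolic part of the boundary, and any discontinuity has to be produced by approaching a parabolic parameter $\lambda^*$ transversally to its arc --- in particular along the umbilical cord of a period-$2k$ child component, which accumulates on a nondegenerate sub-arc rather than landing.

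For $\lambda\in\mathcal{R}(a_0,b_0)$ write $p_\lambda$ for the corresponding real cubic and $R_\lambda=(\iota\circ p_\lambda^{\circ n})\vert_U$ for its anti-quadratic-like renormalization, so that $\chi_{a_0,b_0}(\lambda)$ is by definition the hybrid class of $R_\lambda$. Fix a parabolic parameter $\lambda^*$ of the kind above, set $c^*=\chi_{a_0,b_0}(\lambda^*)$, and choose a sequence $\lambda_j\to\lambda^*$ along the wiggling umbilical cord. Since $p_{\lambda_j}\to p_{\lambda^*}$, the maps $R_{\lambda_j}$ converge geometrically, but their straightenings $c_j:=\chi_{a_0,b_0}(\lambda_j)$ need \emph{not} converge to $c^*$: by the theory of parabolic implosion, the accumulation set of $(c_j)$ is governed by the set of \'Ecalle-cylinder phases with which the cord enters a neighbourhood of the parabolic germ of $R_{\lambda^*}$. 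This is where the parabolic germ rigidity theorems enter: they ensure that these anti-quadratic-\emph{like} maps implode exactly as genuine anti-quadratic polynomials do, so that the limiting behaviour of $(c_j)$ is controlled by those phases in the usual way and a jump of the critical \'Ecalle height through the implosion translates into a genuine displacement of $\lim_j c_j$ along the parabolic arc of $\mathcal{M}_2^*$. Because the umbilical cord wiggles as $\lambda_j\to\lambda^*$, these phases fail to converge, the critical \'Ecalle height of the implosion limit jumps away from that of $c^*$, and $(c_j)$ accumulates on a nondegenerate arc --- in particular on points $c^\dagger\neq c^*$.

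Consequently $\chi_{a_0,b_0}$ is discontinuous at $\lambda^*$, and since there are infinitely many parabolic arcs in $\mathcal{R}(a_0,b_0)$ carrying period-doubling cords, this produces the asserted infinite family of explicit discontinuity parameters. I expect the main obstacle to be ingredient (iii): establishing the wiggling of the umbilical cords \emph{inside the real cubic parameter space}, which was not needed in \cite{IM4}, where source and target were both Tricorns. This requires analyzing the parameter rays of the real cubic connectedness locus near $\mathcal{R}(a_0,b_0)$, transporting the Hubbard--Schleicher-type obstruction to landing through the renormalization operator, and --- since the maps $R_\lambda$ are anti-quadratic-like maps whose domains degenerate as $\lambda$ approaches the parabolic locus, rather than honest polynomials --- keeping careful control of the attendant parabolic implosion; the parabolic germ rigidity results are precisely what legitimize this passage from ``like'' maps to polynomials.
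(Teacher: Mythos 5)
Your proposal correctly identifies the engine of the proof (wiggling of umbilical cords for the Tricorn-like sets in the real cubic locus) and correctly flags it as the hard part, but the step that converts wiggling into discontinuity is not the one the paper uses, and as you state it it has a genuine gap. The paper argues by contradiction in the \emph{opposite} direction: assuming $\chi_{a_0,b_0}$ continuous, injectivity (from \cite[Theorem~B]{IK}) together with compactness of $\mathcal{R}(a_0,b_0)$ (for $n>1$; for $n=1$ the locus is \emph{not} compact and a separate argument produces a compact $K$ containing $\overline{H'}\cup\chi_{a_0,b_0}^{-1}(\gamma^{\mathrm{hyp}})$) makes $\chi_{a_0,b_0}$ a homeomorphism onto a closed image; by Proposition~\ref{prop:real cubic onto hyp} and density of real hyperbolicity \cite{GS,L4} that image contains the real interval $(-1.75,-1.25]$ (or a rotate), which is a genuinely \emph{landing} umbilical cord at the critical \'Ecalle height~$0$ parameter of a real period-$3$ component of $\mathcal{M}_2^*$; its preimage would then be a landing path at the corresponding point of $\partial H'$, contradicting Theorem~\ref{real_germ}. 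Your version instead fixes $\lambda_j\to\lambda^*$ and tries to show $\chi(\lambda_j)\not\to\chi(\lambda^*)$ by asserting that the accumulation set of $c_j$ is ``governed by \'Ecalle-cylinder phases'' and that wiggling forces these phases to diverge. Nothing in the wiggling theorem gives you that phase information, it is not explained what curve in $\mathcal{R}(a_0,b_0)$ you are following (the only natural candidate is $\chi^{-1}$ of the landing cord, which presupposes the paper's set-up), and controlling the straightening map along a sequence approaching a parabolic parameter is exactly the delicate phenomenon at issue — it cannot be invoked as a known mechanism. You also omit the essential preliminary choice of a component $H'$ \emph{disjoint from the line} $\{b=0\}$; cords on the symmetry line do land, so without this the contradiction evaporates.

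Separately, the parabolic germ rigidity theorems (Theorems~\ref{Parabolic_Germs_Determine_Roots_Co_Roots}--\ref{Real_Germs_Real_Parameters_Anti}) play no role in the proof of Theorem~\ref{Straightening_discontinuity_2}; they are independent results, and they certainly do not ``ensure that anti-quadratic-like maps implode as genuine anti-quadratic polynomials do.'' What the wiggling theorem actually uses is: (i) a perturbation/implosion argument showing that landing of the cord forces the repelling equator into the filled Julia set, whence the parabolic germs of $g_{\widetilde a,\widetilde b}^{\circ 6n}$ at $\widetilde z$ and $\widetilde z^*$ are conformally conjugate by a map respecting the two dynamically marked critical orbits (Lemma~\ref{umbilical_lands_germs_conjugate}); (ii) extension of this germ conjugacy to a conjugacy of polynomial-like restrictions (Lemma~\ref{umbilical_cubic}); and (iii) Inou's theorem on extending local analytic conjugacies \cite{I2} together with the Ritt--Engstrom decomposition theory, which force $\widetilde b=0$, contradicting the choice of $H'$. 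Your sketch of ingredient (iii) (``parameter rays near $\mathcal{R}(a_0,b_0)$'' and ``transporting the Hubbard--Schleicher obstruction through the renormalization operator'') does not contain these steps, so the central technical theorem remains unproved in your outline.
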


The study of straightening maps in anti-holomorphic dynamics was initiated in \cite{I1,IM4}. In \cite[Theorem~1.1]{IM4}, the authors proved discontinuity of straightening maps for Tricorn-like sets contained in the Tricorn by demonstrating `wiggling of umbilical cords' for non-real odd-periodic hyperbolic components of the Tricorn (compare \cite[Theorem~1.2]{IM4}). 

The ideas that go into the proof of Theorem~\ref{Straightening_discontinuity_2} are similar to those used to prove \cite[Theorem~1.1, Theorem~1.2]{IM4}. More precisely, the proof of discontinuity is carried out by showing that the straightening map from a Tricorn-like set in the real cubic locus to the original Tricorn sends certain `wiggly' curves to landing curves. Indeed, there exist hyperbolic components $H$ of the Tricorn such that $H$ intersects the real line, and the `umbilical cord' of $H$ lands on the root parabolic arc on $\partial H$. In other words, such a component can be connected to the period $1$ hyperbolic component by a path. On the other hand, using parabolic implosion arguments and analytic continuation of local analytic conjugacies, we show that the \emph{non-real} umbilical cords for the Tricorn-like sets (in the real cubic locus) do not land at a single point. Discontinuity of straightening maps now follows from the observation that (the inverse of) straightening maps send suitable landing umbilical cords to wiggly umbilical cords. 

For topological properties of straightening maps in more general polynomial parameter spaces and the associated discontinuity phenomena, we encourage the readers to consult \cite{IK,I}. The main difference between the discontinuity results proved in \cite{I} and the current paper is that the proof of discontinuity appearing in \cite{I} strictly uses complex two-dimensional bifurcations in the parameter space, and hence cannot be applied to prove discontinuity of straightening maps in real two-dimensional parameter spaces (such as the parameter space of real cubic polynomials). On the other hand, the present proof employs a one-dimensional parabolic perturbation argument to prove `wiggling of umbilical cords' (explained in the previous paragraph), which leads to discontinuity of straightening maps.

\begin{figure}[ht!]
\begin{minipage}{0.45\linewidth}
\centering{\includegraphics[width=0.96\linewidth]{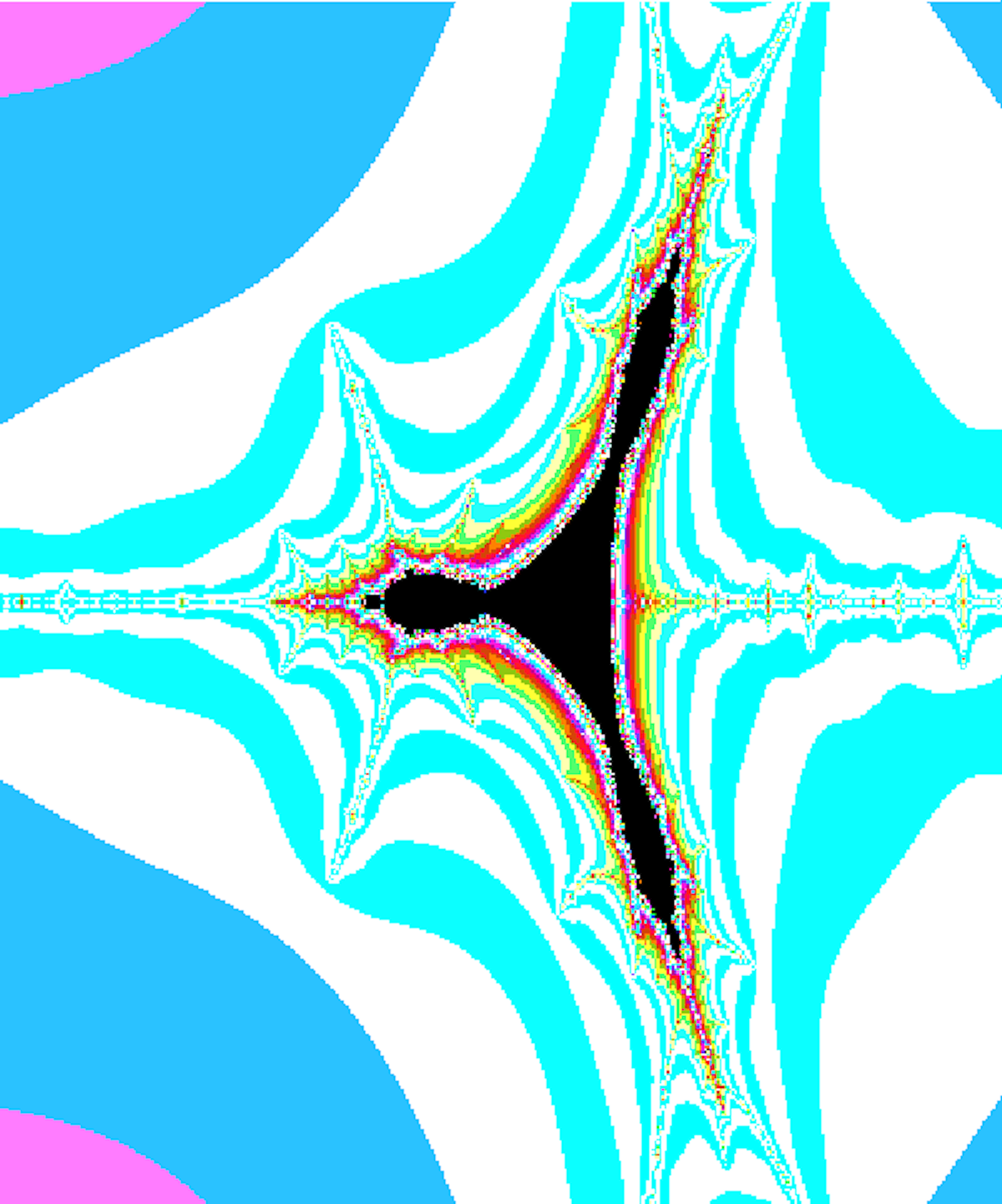}}
\end{minipage}
\begin{minipage}{0.54\linewidth} 
\centering{\includegraphics[width=0.96\linewidth]{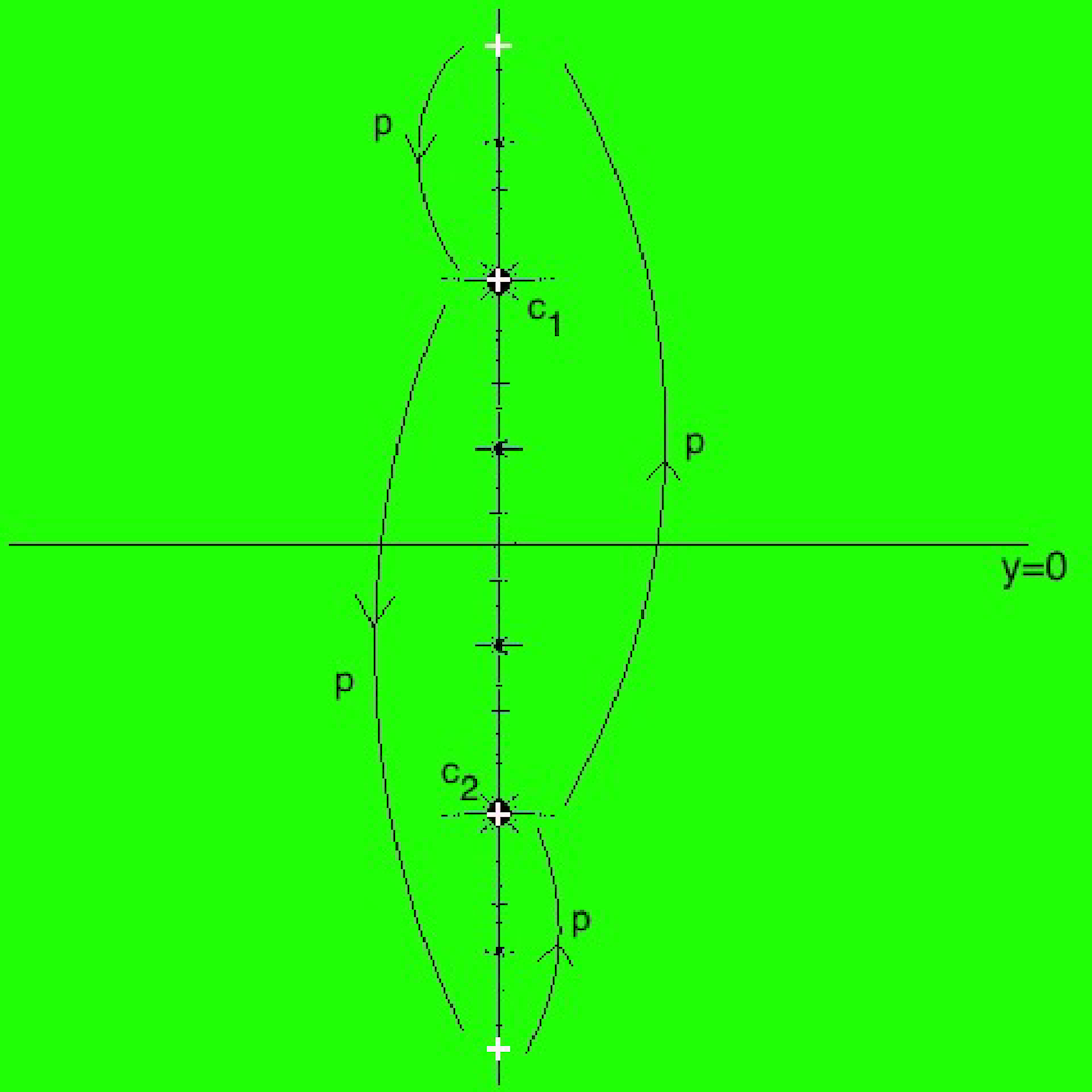}}
\end{minipage} 
\caption{Left: A Tricorn-like set in the parameter plane of real cubic polynomials. Right: The dynamical plane of the center of a bitransitive hyperbolic component in the parameter space of real cubic polynomials with real-symmetric critical points.} 
\label{real cubic} 
\end{figure}

One of the key steps in the proof of `wiggling of umbilical cords' in this paper as well as in \cite{IM4} is to extend a carefully constructed local conjugacy between parabolic germs (of two polynomials) to a semi-local conjugacy (i.e.,\ conjugacy between polynomial-like maps), which allows us to conclude that the corresponding polynomials are affinely conjugate. In general, we believe that two polynomial parabolic germs can be conformally conjugate only if the two polynomials are polynomial semi-conjugates of a common polynomial. Motivated by these considerations, we prove some rigidity principles for unicritical holomorphic and anti-holomorphic polynomials with parabolic cycles. These questions are of independent interest in the theory of parabolic germs. 

Let $\mathcal{M}_d^{\mathrm{par}}$ be the set of parabolic parameters of the multibrot set $\mathcal{M}_d$, which is the connectedness locus of unicritical holomorphic polynomials $p_c(z)=z^d+c$ (see \cite{EMS} for background on the multibrot sets). For a polynomial $p_c$ with a parabolic cycle, we define the \emph{characteristic Fatou component} of $p_c$ as the unique Fatou component of $p_c$ containing the critical value $c$. The \emph{characteristic parabolic point} of $p_c$ is defined as the unique parabolic point on the boundary of the characteristic Fatou component. We prove the following theorem.

\begin{theorem}[Parabolic Germs Determine Roots and Co-roots of Multibrot Sets]\label{Parabolic_Germs_Determine_Roots_Co_Roots}
For $i=1, 2$, let $c_i\in \mathcal{M}_{d_i}^{\mathrm{par}}$, $z_i$ be the characteristic parabolic point of $p_{c_i}(z)=z^{d_i}+c_i$, and $n_i$ be the period of the characteristic Fatou component of $p_{c_i}$. If the restrictions $p_{c_1}^{\circ n_1}\vert_{N_{z_1}}$ and $p_{c_2}^{\circ n_2}\vert_{N_{z_2}}$ (where $N_{z_i}$ is a sufficiently small neighborhood of $z_i$) are conformally conjugate, then $d_1=d_2$, and $p_{c_1}$ and $p_{c_2}$ are affinely conjugate.
\end{theorem}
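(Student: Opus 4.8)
The plan is to exploit the structure of the parabolic germ at a characteristic parabolic point: its holomorphic fixed-point index, the number of attracting petals, and the asymptotic normalization of Fatou coordinates. First I would record that $p_{c_i}^{\circ n_i}\vert_{N_{z_i}}$ is a parabolic germ of the form $z \mapsto z + a z^{q_i+1} + \cdots$, where $q_i$ is the number of attracting petals. Since the germs are conformally conjugate, $q_1 = q_2 =: q$. Because $z_i$ is the \emph{characteristic} parabolic point (on the boundary of the critical-value Fatou component) and $p_{c_i}$ is unicritical of degree $d_i$, the number of attracting petals at the characteristic parabolic point is constrained by $d_i$: on a parabolic cycle of a unicritical polynomial there is a single cycle of attracting petals absorbing the unique critical orbit. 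Known combinatorics of multibrot parabolics (from \cite{EMS}) tie the petal number at the characteristic parabolic point to whether $c_i$ is a root or a co-root and to $d_i$. Concretely, a co-root germ has $q = d_i - 1$ petals (so $d_i = q+1$ is forced), while a root germ has $q = 1$ petal regardless of $d_i$; the holomorphic index and the transit-time (residue) invariant of the Écalle–Voronin data then pin down $d_i$ in the root case.

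The key steps, in order, would be: (1) From conformal conjugacy, equate the parabolic multiplicity/petal number $q$ and the holomorphic fixed-point index $\iota(p_{c_i}^{\circ n_i}, z_i)$ of the two germs. (2) Show that these germ invariants, together with the fact that $z_i$ is characteristic and $p_{c_i}$ unicritical, force $d_1 = d_2 =: d$. In the co-root case this is immediate from the petal count $q = d-1$; in the root case one argues that the characteristic Fatou component is fixed under $p_{c_i}^{\circ n_i}$ with a single petal, and the degree of $p_{c_i}^{\circ n_i}$ restricted to that component is $d$, so $d$ is recovered from the local degree data encoded in how the Fatou coordinate of the characteristic component extends — equivalently from the critical value of $p_{c_i}$ being the unique critical value inside the petal, whose "depth" (number of iterates to reach a fundamental domain) together with the index determines $d$. (3) Once $d_1 = d_2 = d$, invoke the semi-local extension principle alluded to in the introduction: a conformal conjugacy between the parabolic germs $p_{c_1}^{\circ n_1}\vert_{N_{z_1}}$ and $p_{c_2}^{\circ n_2}\vert_{N_{z_2}}$ extends, by analytic continuation along the immediate parabolic basin (pulling back through the dynamics, using that both maps are degree-$d$ polynomial-like on the relevant renormalization domains with connected filled Julia sets), to a hybrid equivalence between the renormalizations $p_{c_i}^{\circ n_i}$ near their characteristic cycles. (4) Rigidity of such renormalizations (the filled Julia set has no interior beyond the basin, or the escaping dynamics plus the conjugacy of germs forces the external class to match) then upgrades the hybrid equivalence to an affine conjugacy between $p_{c_1}$ and $p_{c_2}$; since $p_{c_1}^{\circ n_1}$ and $p_{c_2}^{\circ n_2}$ are conjugate as germs at corresponding parabolic points, the periods $n_1$ and $n_2$ of the characteristic components match, and the affine map conjugating the renormalizations conjugates $p_{c_1}$ to $p_{c_2}$ globally because unicritical polynomials are determined by any such iterate-conjugacy.

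I expect the main obstacle to be step (3): extending the \emph{germ} conjugacy to a \emph{semi-local} (polynomial-like) conjugacy. The delicate point is that a conformal conjugacy near a parabolic fixed point need not a priori extend along the attracting petals in a way compatible with the global dynamics — one must check that the analytic continuation obtained by pulling back through $p_{c_1}^{\circ n_1}$ and $p_{c_2}^{\circ n_2}$ is single-valued and stays within the domains of definition. This is precisely where unicriticality is essential: there is a unique critical point in the characteristic Fatou component, so the pull-back process is unobstructed (no monodromy from multiple critical values), and the extension propagates to the whole immediate basin and then, via the Julia-set dynamics, to a neighborhood of the filled Julia set. A secondary subtlety is the root-versus-co-root dichotomy in step (2): one must handle separately the case where $z_i$ is a root (the characteristic component is fixed with one petal and sits "below" a satellite bifurcation) versus a co-root (the component and its petals are permuted, giving $d-1$ petals at the characteristic point), since the recovery of $d$ from the germ uses different invariants in the two cases.
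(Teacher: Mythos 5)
There are genuine gaps at both ends of your argument. First, your mechanism for recovering the degree does not work. You claim a co-root germ has $q=d_i-1$ petals; in fact every co-root and every root of a \emph{primitive} component has exactly \emph{one} attracting petal at the characteristic parabolic point, regardless of $d_i$ (it is only satellite roots that have $q=n_i/k_i>1$ petals, where $k_i$ is the period of the parabolic \emph{orbit}), so your dichotomy is backwards and the petal count carries no information about $d_i$ in the main case. The holomorphic index and the r\'esidu it\'eratif likewise do not determine the degree of a global polynomial realizing the germ. The paper recovers $d_i$ from the \'Ecalle--Voronin data in a different way: conjugate germs have the same horn map germ, hence the same \emph{extended} horn map $h^+$ (the maximal analytic continuation), and by Buff--Epstein \cite[Proposition~4]{BE} this $h^+$ is a ramified covering with a unique critical value of ramification index $d_i-1$; equating these indices gives $d_1=d_2$. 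Second, your step (3) omits a necessary normalization: an arbitrary germ conjugacy has no reason to map the critical orbit of $p_{c_1}$ to that of $p_{c_2}$, and without that the pull-back extension to the basin fails. One must first post-compose the conjugacy with a suitable iterate so that, in matched Fatou coordinates, $\psi^{\mathrm{att}}_{c_1}(c_1)=\psi^{\mathrm{att}}_{c_2}(c_2)$; only then does the extension argument (as in Lemma~\ref{umbilical_cubic}) go through. Moreover, in the satellite case $p_{c_i}^{\circ n_i}$ is \emph{not} polynomial-like on a neighborhood of the closure of the characteristic component (several petals and their basins touch at $z_i$), so one must pass to a different polynomial-like restriction of the return map $p_{c_i}^{\circ k_i}$ whose domain contains the whole flower; your proposal does not address this.

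The most serious gap is your step (4). A conformal (or hybrid) conjugacy between polynomial-like \emph{restrictions} of two polynomials does not imply affine conjugacy of the polynomials --- this is exactly the situation of renormalization, where infinitely many non-conjugate polynomials admit hybrid-equivalent polynomial-like restrictions. The passage from the semi-local conjugacy to a global affine conjugacy is the algebraic heart of the proof and cannot be waved through as ``rigidity of renormalizations.'' The paper invokes Inou's theorem on extending local analytic conjugacies \cite[Theorem~1]{I2} to produce polynomial semi-conjugacies $p_{c_1}^{\circ k_1}\circ h_1=h_1\circ h$ and $p_{c_2}^{\circ k_2}\circ h_2=h_2\circ h$, and then runs the Ritt--Engstrom decomposition machinery on these relations, using that unicritical non-postcritically-finite polynomials have no finite critical orbit and that the prime factors of $z^d+c$ under composition are power maps and unicritical maps, to force $\deg h_i=1$ up to the stated symmetries. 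Without this ingredient your argument does not reach the conclusion.
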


It is worth mentioning that in the case when $d_1=d_2$, the conclusion of Theorem~\ref{Parabolic_Germs_Determine_Roots_Co_Roots} also follows from \cite[Theorem~1.1]{LoMu}. However, the proof of this result given in \cite{LoMu} uses the language of \emph{parabolic-like maps} (thereby establishing a rigidity result for suitable parabolic-like maps), while the proof given in the current paper only employs the more classical machinery of polynomial-like maps. 

To formulate the anti-holomorphic analogue of the previous theorem, let us define $\Omega_d^{\mathrm{odd}}:=\{ c\in \mathbb{C}: f_c(z)=\overline{z}^d+c$ has a parabolic cycle of \emph{odd} period with a single petal$\}$, and $\Omega_d^{\mathrm{even}}:=\{ c\in \mathbb{C}: f_c(z)=\overline{z}^d+c$ has a parabolic cycle of \emph{even} period$\}$. For $c_1, c_2 \in \Omega_d^{\mathrm{odd}}\cup\Omega_d^{\mathrm{even}}$, we write $c_1 \sim c_2$ if $\overline{z}^d+c_1$ and $\overline{z}^d+c_2$ are affinely conjugate; i.e.,\ if $c_2/c_1$ is a $(d+1)$-st root of unity. We denote the set of equivalence classes under this equivalence relation by $\Omega_d^{\mathrm{odd}}\cup\Omega_d^{\mathrm{even}}/\mathord\sim$. By abusing notation, we will identify $c_i$ with its equivalence class in $\Omega_d^{\mathrm{odd}}\cup\Omega_d^{\mathrm{even}}/\mathord\sim$. The first obstruction to recovering $f_c$ from its parabolic germ comes from the following observation: if $c\in \Omega_d^{\mathrm{odd}}$ has a parabolic cycle of odd period $k$, then the characteristic parabolic germs of $f_c^{\circ 2k}$ and $f_{c^*}^{\circ 2k}$ are conformally conjugate by the map $\iota\circ f_{c}^{\circ k}$ (here, and in the sequel, $z^*$ will stand for the complex conjugate of the complex number $z$). The next theorem shows that this is, in fact, the only obstruction.

\begin{theorem}[Recovering Anti-polynomials from Their Parabolic Germs]\label{recovering_Anti-polynomials}
For $i=1, 2$, let $c_i \in\left(\Omega_{d_i}^{\mathrm{odd}}\cup\Omega_{d_i}^{\mathrm{even}}/\mathord\sim\right)$, $z_i$ be the characteristic parabolic point of $f_{c_i}$, and $n_i$ be the period of the characteristic Fatou component of $f_{c_i}$ under $f_{c_i}^{\circ 2}$. If the parabolic germs $f_{c_1}^{\circ 2n_1}$ and $f_{c_2}^{\circ 2n_2}$ around $z_1$ and $z_2$ (respectively) are conformally conjugate, then $d_1=d_2 = d$ (say), and one of the following is true.
\begin{enumerate}
 
\item $c_1, c_2\in \Omega_{d}^{\mathrm{even}}$, and $c_1=c_2$ in $\Omega_d^{\mathrm{even}}/\mathord\sim$.
 
\item $c_1, c_2\in \Omega_{d}^{\mathrm{odd}}$, and $c_2\in \lbrace c_1, c_1^*\rbrace$ in $\Omega_d^{\mathrm{odd}}/\mathord\sim$.

\end{enumerate}
\end{theorem}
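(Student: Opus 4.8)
The plan is to follow the strategy behind Theorem~\ref{Parabolic_Germs_Determine_Roots_Co_Roots}: promote the given conjugacy of parabolic germs to a semi-local conjugacy (a conjugacy of polynomial-like maps) and then appeal to rigidity of unicritical (anti-)polynomial-like maps; the additional work is anti-holomorphic bookkeeping. The structural observations to start from are that $g_i:=f_{c_i}^{\circ 2n_i}$ is a \emph{holomorphic} germ with a parabolic fixed point of multiplier $1$ at $z_i$, that it is the first-return map of the characteristic Fatou component $U_i$ of $f_{c_i}$ under the holomorphic iterate $f_{c_i}^{\circ 2}$, and that, viewed as a polynomial-like map on a neighbourhood of $\overline{U_i}$, it is unicritical of degree $d_i$ with critical value $c_i$. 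When $c_i\in\Omega_{d_i}^{\mathrm{odd}}$ has a parabolic cycle of odd period $k_i$, one has $n_i=k_i$ and $g_i=\sigma_i^{\circ 2}$, where $\sigma_i:=f_{c_i}^{\circ k_i}$ is the anti-holomorphic first-return map of $U_i$, an anti-holomorphic parabolic germ at $z_i$; since a polynomial parabolic germ is never embeddable in a holomorphic flow, the holomorphic centraliser of $g_i$ equals $\{g_i^{\circ m}:m\in\Z\}$ and $\sigma_i$ is the \emph{unique} anti-holomorphic square root of the germ $g_i$.

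First I would normalise and extend the given conjugacy $\phi$. A conformal conjugacy of parabolic germs preserves attracting and repelling petals and induces an isomorphism of \'Ecalle--Voronin data; after post-composing with an iterate of $g_2$ I may assume $\phi$ is a translation in Fatou coordinates. Because $g_i$ is the unicritical degree-$d_i$ return map of $U_i$ and the critical points correspond, $\phi$ extends conformally over $U_i$ without branching obstruction, and then, by propagating along the (finite) $f_{c_i}^{\circ 2}$-orbit of $z_i$ and pulling back along the grand orbit of $U_i$, to a conformal conjugacy $\Phi$ between polynomial-like restrictions of $f_{c_1}^{\circ 2}$ and $f_{c_2}^{\circ 2}$ defined on neighbourhoods of $K(f_{c_i})$. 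Consistency of the propagation around the $f_{c_i}^{\circ 2}$-cycle of $z_i$ forces $n_1=n_2$, and comparing polynomial-like degrees forces $d_1^2=d_2^2$, hence $d_1=d_2=:d$.

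The remaining task is to recover the anti-holomorphic maps themselves. The composition $S:=\Phi\circ f_{c_1}\circ\Phi^{-1}$ is anti-holomorphic of polynomial-like degree $d$, commutes with $f_{c_2}^{\circ 2}$, and is an anti-holomorphic square root of the polynomial $f_{c_2}^{\circ 2}(z)=(z^d+c_2^{*})^d+c_2$. In the even case one checks, using a uniqueness statement (up to affine maps) for the degree-$d$ decomposition of $f_{c_2}^{\circ 2}$, valid since $c_2\neq 0$, that the only anti-holomorphic square root is $f_{c_2}$ itself; hence $\Phi\circ f_{c_1}=f_{c_2}\circ\Phi$, so that $f_{c_1}$ and $f_{c_2}$ are conformally conjugate as unicritical anti-polynomial-like maps with connected filled Julia set, and rigidity of such maps (via anti-holomorphic straightening, cf.\ \cite[Theorem~5.3]{IM4}) makes $f_{c_1}$ and $f_{c_2}$ affinely conjugate, i.e.\ $c_1=c_2$ in $\Omega_d^{\mathrm{even}}/\mathord\sim$: this is alternative~(1). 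In the odd case the canonical extension of $\phi$ instead respects the anti-holomorphic square root, $\phi\circ\sigma_1\circ\phi^{-1}=\sigma_2$, and recovering $f_{c_i}$ from $\sigma_i=f_{c_i}^{\circ k_i}$ together with the data on the repelling petal of $g_i$ carries a two-fold indeterminacy coming from the conjugation symmetry $f_{c^{*}}=\iota\circ f_c\circ\iota$ --- the model being the conjugacy $\iota\circ f_{c_1}^{\circ k_1}$ between the characteristic germs of $f_{c_1}^{\circ 2k_1}$ and $f_{c_1^{*}}^{\circ 2k_1}$ recalled in the Introduction --- so that $f_{c_1}$ ends up affinely conjugate either to $f_{c_2}$ or to $f_{c_2^{*}}$, whence $c_2\in\{c_1,c_1^{*}\}$ in $\Omega_d^{\mathrm{odd}}/\mathord\sim$: this is alternative~(2).

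I expect the delicate points to be: (i) verifying that the normalised germ conjugacy genuinely propagates to a conjugacy of polynomial-like maps on full neighbourhoods of $K(f_{c_i})$, so that the renormalisation is compared together with all the remaining dynamics rather than in isolation; and (ii) showing that in the odd case the conjugation symmetry $\iota$ is the \emph{only} surviving indeterminacy. Both rest on having the centraliser of the parabolic germ $g_i$ as small as possible, which is exactly the non-flow-embeddability of polynomial parabolic germs; this is what forces $\phi$ to respect the canonical anti-holomorphic square root $\sigma_i$ and thereby limits the ambiguity to $\{c_1,c_1^{*}\}$. Reconciling the even case, where no ambiguity survives, with the odd case, where one does, is the heart of the argument and the place where the most care is needed.
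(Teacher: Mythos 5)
Your overall strategy (promote the germ conjugacy to a conjugacy of polynomial-like maps, then invoke rigidity) is the right one, and you correctly sense that the $\{c_1,c_1^*\}$ ambiguity in the odd case must come from the symmetry $\iota\circ f_{c}^{\circ k}$. But there is a genuine gap at the crucial step: you assert that ``the critical points correspond'' so that $\phi$ ``extends conformally over $U_i$ without branching obstruction.'' Nothing in your normalisation (post-composing with an iterate of $g_2$ to make $\phi$ a translation in Fatou coordinates) establishes this. A conjugacy of parabolic germs is a purely local object and a priori carries no information about post-critical orbits; indeed the paper emphasizes that the ever-present local conjugacy $\iota\circ f_{c}^{\circ k}$ between the characteristic germs of $f_c^{\circ 2k}$ and $f_{c^*}^{\circ 2k}$ \emph{exchanges} the two dynamically distinct critical orbits and therefore never extends to the basin. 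The paper's mechanism for recovering the critical data from the germ is the extended horn map $h^+$: since its domain is the maximal domain of analyticity, $h^+$ is determined by the germ, and by \cite[Proposition~4]{BE} its singular values are exactly $\Pi(\psi^{\mathrm{att}}(c))$ (one value in the even case, the pair $\{\Pi(it),\Pi(1/2-it)\}$ in the odd case). This is what pins down $d_1=d_2$ (via ramification indices), separates the even and odd cases (one versus two singular values), and in the odd case forces the critical \'Ecalle heights to agree after possibly replacing $\phi$ by $\iota\circ f_{c_2}^{\circ n_2}\circ\phi$ --- which is precisely where the dichotomy $c_2\in\{c_1,c_1^*\}$ is proved rather than merely motivated. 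This tool is entirely absent from your proposal, and without it the extension over $U_i$ is unjustified. Note also that in the odd case the return map $f_{c_i}^{\circ 2k_i}\vert_{U_{c_i}}$ is \emph{not} unicritical of degree $d_i$: it has degree $d_i^2$ with two dynamically distinct critical orbits, which is exactly why the \'Ecalle height matters.

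A second, independent problem is your claim that the conjugacy propagates ``along the grand orbit of $U_i$'' to polynomial-like restrictions of $f_{c_i}^{\circ 2}$ on full neighbourhoods of $K(f_{c_i})$. This is far stronger than what the extension argument of Lemma~\ref{umbilical_cubic} yields (a conjugacy on a neighbourhood of $\overline{U_{c_i}}$ only), and you give no argument for it; pulling back along the grand orbit meets critical points and there is no reason the extension stays single-valued or injective globally. The paper does not need this: from the semi-local conjugacy near $\overline{U_{c_i}}$ it invokes \cite[Theorem~1]{I2} to produce polynomial semi-conjugacies (which in particular give $n_1=n_2$ by comparing degrees) and then runs the Ritt--Engstrom decomposition to conclude affine conjugacy of the global maps. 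Your proposed endgame via a unique anti-holomorphic square root of $f_{c_2}^{\circ 2}$ is an appealing alternative, but as written it rests both on the unestablished global extension and on an unproved uniqueness claim for the square root; the centraliser of a non-embeddable parabolic germ need not reduce to the iterates of the germ (it can contain roots), so that step also needs an actual argument.
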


A parabolic germ $g$ at $0$ is said to be \emph{real-symmetric} if in some conformal coordinates, we have $$\overline{g(\overline{z})}=g(z);$$ i.e.,\ if all the coefficients in its power series expansion are real after a local conformal change of coordinates. We prove that the tangent-to-identity parabolic germ of a unicritical parabolic polynomial is real-symmetric if and only if the polynomial commutes with an anti-holomorphic involution of the plane.

\begin{theorem}[Real-symmetric Germs Only for Real Polynomials]
\label{Real_Germs_Real_Parameters}
Let $c$ be in $\mathcal{M}_d^{\mathrm{par}}$, $z_c$ be the characteristic parabolic point of $p_c$, $U_c$ be the characteristic Fatou component of $p_c$, and $n$ be the period of the component $U_c$. If the parabolic germ of $p_c^{\circ n}$ at $z_c$ is real-symmetric, then $p_c$ commutes with an anti-holomorphic involution of the plane.
\end{theorem}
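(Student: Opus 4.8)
The plan is to reduce the statement to Theorem~\ref{Parabolic_Germs_Determine_Roots_Co_Roots} by exploiting the ambient anti-holomorphic involution $\iota(z)=\overline z$. First I would unwind real-symmetry at the level of germs. If $g:=\phi\circ p_c^{\circ n}\circ\phi^{-1}$ has real power-series coefficients at $0$ in a conformal chart $\phi$ normalized so that $\phi(z_c)=0$, then $g$ commutes with $\iota$ as a germ at $0$, and hence $\sigma:=\phi^{-1}\circ\iota\circ\phi$ is a biholomorphic anti-holomorphic germ at $z_c$ with $\sigma(z_c)=z_c$, $\sigma\circ\sigma=\mathrm{id}$, and $\sigma\circ p_c^{\circ n}\circ\sigma^{-1}=p_c^{\circ n}$ near $z_c$. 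Thus real-symmetry of the characteristic parabolic germ is equivalent to the existence of a local anti-holomorphic involution at $z_c$ conjugating $p_c^{\circ n}$ to itself.

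Next I would use $\iota$ itself. Since $\iota\circ p_c\circ\iota^{-1}=p_{\overline c}$, the map $\iota$ conjugates the dynamics of $p_c$ to that of $p_{\overline c}$, sends the critical point and critical value of $p_c$ to those of $p_{\overline c}$, hence carries the characteristic Fatou component $U_c$ to that of $p_{\overline c}$, carries $z_c$ to $\overline{z_c}$ (the characteristic parabolic point of $p_{\overline c}$), and preserves the period $n$; in particular $\overline c\in\mathcal{M}_d^{\mathrm{par}}$. Composing, $\psi:=\iota\circ\sigma$ is a biholomorphic (holomorphic) germ from a neighbourhood of $z_c$ onto a neighbourhood of $\overline{z_c}$ satisfying $\psi\circ p_c^{\circ n}=p_{\overline c}^{\circ n}\circ\psi$, so the characteristic parabolic germs of $p_c^{\circ n}$ and $p_{\overline c}^{\circ n}$ are conformally conjugate.

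Now I would invoke Theorem~\ref{Parabolic_Germs_Determine_Roots_Co_Roots} with $c_1=c$, $c_2=\overline c$, $d_1=d_2=d$ to conclude that $p_c$ and $p_{\overline c}$ are affinely conjugate, say $A\circ p_c\circ A^{-1}=p_{\overline c}$ for an affine map $A$. Since $\iota\circ p_{\overline c}\circ\iota^{-1}=p_c$, the anti-holomorphic map $\tau:=\iota\circ A$ then satisfies $\tau\circ p_c\circ\tau^{-1}=p_c$. Finally $\tau^{\circ 2}$ is a holomorphic affine self-conjugacy of $p_c$; because $c\neq 0$ (the map $z^d$ has no parabolic cycle, so $0\notin\mathcal{M}_d^{\mathrm{par}}$) the affine automorphism group of $p_c$ is trivial, whence $\tau^{\circ 2}=\mathrm{id}$ and $\tau$ is an anti-holomorphic involution of the plane commuting with $p_c$.

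I do not expect a serious obstacle here, because all of the genuine analytic content — the passage from a conformal conjugacy of parabolic germs to an affine conjugacy of the polynomials, via polynomial-like maps — is already packaged inside Theorem~\ref{Parabolic_Germs_Determine_Roots_Co_Roots}. The only points requiring care are bookkeeping ones: making precise that having real coefficients in \emph{some} conformal chart is exactly the same as admitting a local anti-holomorphic involution fixing $z_c$ (including checking $\sigma$ is genuinely defined and invertible on a neighbourhood, not merely formally), and verifying that $\iota$ transports ``characteristic data to characteristic data'' so that the parameters $z_i$ and $n_i$ in the hypotheses of Theorem~\ref{Parabolic_Germs_Determine_Roots_Co_Roots} are correctly matched up for $p_{\overline c}$. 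Everything else is formal composition of conjugacies.
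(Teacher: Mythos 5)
Your proposal is correct and follows essentially the same route as the paper: both use the real-symmetry chart together with $\iota$ to produce a conformal conjugacy between the characteristic parabolic germs of $p_c^{\circ n}$ and $p_{\overline{c}}^{\circ n}$ (your $\psi=\iota\circ\sigma$ is the paper's $\eta=\left(\iota\circ\alpha\circ\iota\right)^{-1}\circ\alpha$), and then invoke Theorem~\ref{Parabolic_Germs_Determine_Roots_Co_Roots}. The only cosmetic difference is at the end, where the paper computes $c^*=\omega^j c$ explicitly and exhibits the involution $\zeta\mapsto\omega^{-j}\zeta^*$, while you deduce that $\tau=\iota\circ A$ is an involution from the triviality of the affine centralizer of $p_c$ for $c\neq 0$; both arguments are valid.
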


On the anti-holomorphic side, we have the following result.

\begin{theorem}[Real-symmetric Germs Only for Real Anti-polynomials]
\label{Real_Germs_Real_Parameters_Anti}
Let $c$ be in $\Omega_{d}^{\mathrm{odd}}\cup \Omega_{d}^{\mathrm{even}}$, $z_c$ be the characteristic parabolic point of $f_c$, $U_c$ be the characteristic Fatou component of $f_c$, and $n$ be the period of the component $U_c$ under $f_c^{\circ 2}$. If the parabolic germ of $f_c^{\circ 2n}$ at $z_c$ is real-symmetric, then $f_c$ commutes with an anti-holomorphic involution of the plane.
\end{theorem}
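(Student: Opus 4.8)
The plan is to convert the real-symmetry of the germ into an anti-holomorphic involution commuting with $g:=f_c^{\circ 2n}$ near $z_c$, to dispose of the even case directly via Theorem~\ref{recovering_Anti-polynomials}, and to handle the odd case by globalizing that involution through the polynomial-like extension technique. Unwinding the definition, there is a germ of an anti-holomorphic involution $\sigma$ at $z_c$ with $\sigma\circ g=g\circ\sigma$ and $\sigma(z_c)=z_c$. Writing $\iota$ for complex conjugation, one has $\iota\circ f_c\circ\iota=f_{\overline c}$, so that $\overline{z_c}$ is the characteristic parabolic point of $f_{\overline c}$ and $\phi:=\iota\circ\sigma$ is a germ of a conformal map conjugating the parabolic germ of $f_c^{\circ 2n}$ at $z_c$ to that of $f_{\overline c}^{\circ 2n}$ at $\overline{z_c}$; moreover $\overline c$ lies in the same family as $c$ (i.e.\ in $\Omega_d^{\mathrm{even}}$, resp.\ $\Omega_d^{\mathrm{odd}}$), since $\iota$ is an anti-holomorphic conjugacy preserving periods and petal numbers. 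In the even case, the first alternative of Theorem~\ref{recovering_Anti-polynomials} applied to $(c_1,c_2)=(c,\overline c)$ gives $\overline c=\zeta c$ for some $(d+1)$-st root of unity $\zeta$; then $\alpha:=c/\overline c$ is a $(d+1)$-st root of unity with $\alpha\overline c=c$, and these are exactly the conditions making $z\mapsto\alpha\overline z$ an anti-holomorphic involution commuting with $\overline z^d+c=f_c$. This settles the theorem when $c\in\Omega_d^{\mathrm{even}}$.

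In the odd case $n$ coincides with the period $k$ of the parabolic cycle, so — by the discussion preceding Theorem~\ref{recovering_Anti-polynomials} — the two parabolic germs in question are \emph{always} conjugated by $\iota\circ f_c^{\circ k}$, and the second alternative of that theorem returns only the tautology $\overline c\in\{c,\overline c\}$; one must therefore exploit that $\sigma$ is an \emph{involution}. The germ $\rho:=(f_c^{\circ k})^{-1}\circ\sigma=(\iota\circ f_c^{\circ k})^{-1}\circ\phi$ is a conformal automorphism of $g$ at $z_c$, tangent to the identity there (any conformal germ commuting with a simple parabolic germ and fixing its fixed point has derivative $1$), and $\rho$ is \emph{not} an iterate of $g$: otherwise $\sigma=f_c^{\circ k}\circ g^{\circ m}=f_c^{\circ k(2m+1)}$ would be an odd iterate of $f_c^{\circ k}$, whence $\sigma^{\circ 2}=g^{\circ(2m+1)}\neq\mathrm{id}$, contradicting $\sigma^{\circ 2}=\mathrm{id}$.

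This nontrivial automorphism is precisely the extra input needed to feed $\phi$ into the extension argument underlying Theorem~\ref{recovering_Anti-polynomials}: extend $\sigma$ over the closure of the immediate parabolic basin using the attracting and repelling Fatou coordinates at $z_c$ (and the fact that $\sigma$ commutes with the first-return map $g=f_c^{\circ 2n}$), then around the cycle of characteristic Fatou components by transport under $f_c^{\circ 2}$ (consistent around the cycle precisely because $\sigma$ commutes with $g$), and finally promote the result — as in the wiggling-of-umbilical-cords argument of \cite{IM4} — to a conjugacy between polynomial-like restrictions of $f_c^{\circ 2}$, hence to an anti-holomorphic automorphism $\widetilde\sigma$ of the polynomial $f_c^{\circ 2}$. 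Such a $\widetilde\sigma$ is affine, and since it restricts to $\sigma$ near $z_c$, the holomorphic map $\widetilde\sigma^{\circ 2}$ commutes with $f_c^{\circ 2}$ and equals $\mathrm{id}$ near $z_c$, so $\widetilde\sigma^{\circ 2}=\mathrm{id}$. Now put $h:=\widetilde\sigma\circ f_c\circ\widetilde\sigma^{-1}$, an anti-holomorphic, unicritical polynomial of degree $d$ with $h^{\circ 2}=f_c^{\circ 2}$. Because $c\neq 0$ (as $f_0(z)=\overline z^d$ has no parabolic cycle), the two critical values $c$ and $\overline c^{\,d}+c$ of $f_c^{\circ 2}$ are distinct and are distinguished by the number of critical points of $f_c^{\circ 2}$ mapping onto them ($d$ and $1$, respectively); comparing with the analogous count for $h^{\circ 2}$ forces the critical point of $h$ to be $0$ (the critical point of $f_c$) and $h(0)=c$, so $h(z)=a\overline z^d+c$ for some $a\neq 0$, and $h^{\circ 2}=f_c^{\circ 2}$ then forces $a=1$. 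Hence $h=f_c$, i.e.\ $\widetilde\sigma$ commutes with $f_c$, so $f_c$ commutes with the anti-holomorphic involution $\widetilde\sigma$, as claimed.

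The only non-formal step is the globalization in the odd case: upgrading the germ-level involution at the characteristic parabolic point to an anti-holomorphic involution of the whole plane that is compatible with the global dynamics. This is exactly the passage from a local conjugacy to a semi-local conjugacy (of polynomial-like maps) to an affine conjugacy that forms the technical heart of Theorem~\ref{recovering_Anti-polynomials} and of the argument in \cite{IM4}; it relies on Fatou-coordinate bookkeeping and parabolic implosion, and it is the unicritical structure of $f_c$ — a single critical orbit threading the parabolic cycle — that rigidifies the basin enough to guarantee that the extension exists and is affine (rather than, say, a rotation of the basin). Everything else is bookkeeping with roots of unity, critical orbits, and the relevant definitions.
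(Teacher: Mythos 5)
Your even-period case is correct and follows the paper's route: apply Theorem~\ref{recovering_Anti-polynomials} to the pair $(c,c^*)$ and translate $c^*=\omega^jc$ into the involution $\zeta\mapsto\omega^{-j}\zeta^*$. Your endgame in the odd case (passing from an affine anti-holomorphic involution commuting with $f_c^{\circ 2}$ to one commuting with $f_c$ by counting critical preimages of the two critical values) is also sound and is a genuinely different finish from the paper, which instead reduces to Case~1 of the proof of Theorem~\ref{recovering_Anti-polynomials} to get $c^*=\omega^jc$ directly.

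However, there is a genuine gap at the heart of the odd case: the globalization of $\sigma$. To extend the germ-level involution $\sigma$ (equivalently, the conjugacy $\phi=\iota\circ\sigma$ between the germs of $f_{c}^{\circ 2n}$ and $f_{c^*}^{\circ 2n}$) over the closure of $U_c$ by lifting through the branched covering $\psi^{\mathrm{att}}_c:U_c\to\C$, the map must preserve the ramification data, i.e., it must match up the \emph{two} dynamically distinct infinite critical orbits of $f_c^{\circ 2n}\vert_{U_c}$ (the orbit of $c$ and the orbit of $f_c^{\circ n}(c)$, sitting at heights $t$ and $-t$ in the Fatou cylinder, where $t$ is the critical {\'E}calle height). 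Your appeal to ``a single critical orbit threading the parabolic cycle'' is misleading here: an anti-holomorphic involution germ commuting with the return map acts as $\zeta\mapsto\overline{\zeta}+i\beta$ in the attracting Fatou coordinate, and such a map preserves the critical-value set of the extended Fatou coordinate only if $t=0$. So the extension you invoke exists precisely when $t=0$ --- and that is the nontrivial fact the paper must prove \emph{first}, using Lemma~\ref{extended_horns_symmetric} (the $w\mapsto1/\overline{w}$ symmetry between the upper and lower extended horn maps forces $\Pi(\psi^{\mathrm{att}}_c(c))=1/\overline{\Pi(\psi^{\mathrm{att}}_c(c))}$, hence $t=0$). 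Your construction of the nontrivial tangent-to-identity centralizer element $\rho$ is a nice observation, but you never use it to establish the critical-orbit compatibility; as written, the proposal assumes the extendability it needs. Once $t=0$ is supplied, the rest of your argument goes through.
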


Let us now outline the organization of the paper. Section~\ref{Tricorns_Real_Cubics} is devoted to a study of Tricorn-like sets in the real cubic locus. After preparing the necessary background on Tricorn-like sets in Subsections~\ref{sec_per_one} and~\ref{sec_bitran}, we prove Theorem~\ref{Straightening_discontinuity_2} (which asserts that the straightening map from any Tricorn-like set in the real cubic locus to the original Tricorn is discontinuous) in Subsection~\ref{proof_main_thm}. The rest of the paper concerns local-global principles for polynomial parabolic germs. In Section~\ref{cauliflower_recover}, we first recall some known facts about extended horn maps, and give a proof of Theorem~\ref{Parabolic_Germs_Determine_Roots_Co_Roots} using the mapping properties of extended horn maps. In this section, we also prove Theorem~\ref{recovering_Anti-polynomials} to the effect that one can recover the parabolic parameters of the multicorns, up to some natural rotational and reflection symmetries, from their parabolic germs. Finally, in Section~\ref{real-symmetric_parabolic_germs}, we prove Theorems~\ref{Real_Germs_Real_Parameters} and~\ref{Real_Germs_Real_Parameters_Anti} which state that the parabolic germ of a unicritical holomorphic polynomial (respectively anti-polynomial) is conformally conjugate to a real-symmetric parabolic germ if and only if the polynomial (respectively anti-polynomial) commutes with a global anti-holomorphic involution whose axis of symmetry passes through the parabolic point. 
\bigskip

\noindent\textbf{Acknowledgements.} The first author would like to express his gratitude for the support of JSPS KAKENHI Grant Number 26400115. The second author gratefully acknowledges the support of Deutsche Forschungsgemeinschaft DFG, the Institute for Mathematical Sciences at Stony Brook University, and an endowment from Infosys Foundation during parts of the work on this project.

\section{Tricorns in Real Cubics}\label{Tricorns_Real_Cubics}
\textbf{A standing convention:} In the rest of the paper, we will denote the complex conjugate of a complex number $z$ either by $\overline{z}$ or by $z^*$. The complex conjugation map will be denoted by $\iota$, i.e.,\ $\iota(z)=z^*$. The image of a set $U$ under complex conjugation will be denoted as $\iota(U)$, and the topological closure of $U$ will be denoted by $\overline{U}$.

In this section, we will discuss some topological properties of Tricorn-like sets, and umbilical cords in the family of real cubic polynomials. We will work with the family:
$$
\mathcal{G} = \{ g_{a,b}(z)= -z^3 - 3a^2z+b,\ a \geq 0, b\in \mathbb{R}\}.
$$

Milnor \cite{M3} numerically found that the connectedness locus of this family contains Tricorn-like sets. We will rigorously define Tricorn-like sets in this family (via straightening of suitable anti-polynomial-like maps), and show that the corresponding straightening maps from these Tricorn-like sets to the original Tricorn are discontinuous. 

The map $g_{a,b}$ commutes with complex conjugation $\iota$; i.e.,\ $g_{a,b}$ has a reflection symmetry with respect to the real line.  Observe that $g_{a,b}$ is conjugate to the monic centered polynomial $h_{a,b}(z)= z^3-3a^2z+bi$ by the affine map $z\mapsto iz$, and $h_{a,b}$ has a reflection symmetry with respect to the imaginary axis (i.e.,\ $h_{a,b}(-\iota(z))=-\iota(h_{a,b}(z))$). We will, however, work with the real form\footnote{This parametrization has the advantage that the critical orbits are complex conjugate.} $g_{a,b}$, and will normalize the B{\"o}ttcher coordinate $\phi_{a,b}$ of $g_{a,b}$ (at $\infty$) such that $\phi_{a,b}(z)/z \to -i$ as $z \to \infty$. Roughly speaking, the invariant dynamical rays $R_{(a,b)}(0)$ and $R_{(a,b)}(1/2)$ tend to $+i\infty$ and $-i\infty$ respectively as the potential tends to infinity. By symmetry with respect to the real line, the $2$-periodic rays $R_{(a,b)}(1/4)$ and $R_{(a,b)}(3/4)$ are contained in the real line. Also note that $-g_{a,b}(-z)=g_{a,-b}(z)$. Nonetheless, to define straightening maps consistently, we need to distinguish $g_{a,b}$ and $g_{a,-b}$ as they have different rational laminations (with respect to our normalized B{\"o}ttcher coordinates). We will denote the connectedness locus of $\mathcal{G}$ by $\mathcal{C}(\mathcal{G})$.

\begin{remark}
The parameter space of the family $\widetilde{\mathcal{G}}= \{ \widetilde{g}_{a,b}(z)= z^3 + 3a^2z+b,\ a \geq 0, b\in \mathbb{R}\}$ of real cubic polynomials also contains Tricorn-like sets. However, the definition of Tricorn-like sets, the proof of umbilical cord wiggling, and discontinuity of straightening for the family $\widetilde{\mathcal{G}}$ are completely analogous to those for the family $\mathcal{G}$. Hence we work out the details only for the family $\mathcal{G}$.
\end{remark}

\subsection{The Hyperbolic Component of Period One}\label{sec_per_one}
Before studying renormalizations, we give an explicit description of the hyperbolic component of period one of $\mathcal{G}$.

Let,
\[
 \per_p(\lambda)=\{(a,b): \ g_{a,b}\mbox{ has a periodic point of period $p$ with multiplier $\lambda$}\}.
\]

It is easy to see that each $g_{a,b}$ in our family has exactly one real fixed point $x$, and exactly two non-real fixed points. 

\begin{lemma}[Indifferent Fixed Points]\label{indiff_fixed}
 If $g_{a,b}$ has an indifferent fixed point,
 then it is real and its multiplier is $-1$.
\end{lemma}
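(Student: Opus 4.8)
The plan is to analyze the fixed-point equation and the multiplier directly, exploiting the reflection symmetry of $g_{a,b}$ with respect to the real line. First I would observe, as noted just before the statement, that $g_{a,b}$ has exactly one real fixed point $x$ and a complex-conjugate pair of non-real fixed points $z_0, \overline{z_0}$. Since $g_{a,b}$ commutes with $\iota$, we have $g_{a,b}'(\overline{z_0}) = \overline{g_{a,b}'(z_0)}$, so the two non-real fixed points have complex-conjugate (hence equal-modulus) multipliers; in particular if one of them is indifferent so is the other. So the first job is to rule out the non-real pair being indifferent, and the second is to pin down the multiplier at the real fixed point.

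For the non-real pair: I would write down the explicit relation between the multipliers of the three fixed points. For a cubic polynomial, if $\zeta_1,\zeta_2,\zeta_3$ are the fixed points (counted with multiplicity) and none is multiple, the standard holomorphic fixed-point formula gives $\sum_j \frac{1}{1-g_{a,b}'(\zeta_j)} = 1$ (the index/Fatou relation for a degree-$3$ map, which has $3+1=4$ fixed points on $\widehat{\C}$, one of which is $\infty$ with $1/(1-g') $-contribution $0$ in the appropriate sense — I would be careful here and instead use the cleaner polynomial version $\sum 1/(1 - g_{a,b}'(\zeta_j)) = 0$ summing over the finite fixed points when $\infty$ is super-attracting). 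Combining this with: (a) $g_{a,b}'(x)$ real, (b) $g_{a,b}'(\overline{z_0}) = \overline{g_{a,b}'(z_0)}$, and (c) the assumed modulus-$1$ condition on $g_{a,b}'(z_0)$, should force a contradiction — roughly, $\frac{1}{1-\lambda} + \frac{1}{1-\overline\lambda}$ with $|\lambda|=1$ equals $1$ (a real number), forcing $\frac{1}{1-g_{a,b}'(x)} = -1$, i.e. $g_{a,b}'(x) = 2$, which is incompatible with $x$ being the attracting/repelling real fixed point in the relevant regime — or, more robustly, I would just check directly that the non-real fixed points of $g_{a,b}$ always lie in the repelling regime by an explicit computation with the resolvent of $z = -z^3 - 3a^2 z + b$. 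The cleanest route may in fact be purely algebraic: solve for the non-real fixed points, compute $g_{a,b}'$ there as a function of $(a,b)$, and show $|g_{a,b}'(z_0)| > 1$ identically, so only the real fixed point can be indifferent.

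Once attention is restricted to the real fixed point $x$, its multiplier $\mu = g_{a,b}'(x) = -3x^2 - 3a^2$ is automatically $\le -3a^2 \le 0$, being manifestly real and non-positive. An indifferent real multiplier that is $\le 0$ must be $-1$ (the only point of the unit circle that is real and $\le 0$), and this is exactly the conclusion. Actually this observation handles the real case immediately and almost trivially: $g_{a,b}'(x) = -3(x^2 + a^2) \in (-\infty, 0]$, so if $|g_{a,b}'(x)| = 1$ then $g_{a,b}'(x) = -1$. So the entire content of the lemma reduces to the assertion that an indifferent fixed point cannot be one of the non-real ones.

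The main obstacle, therefore, is the non-real case: showing the complex-conjugate pair of fixed points is always repelling. I would attack this by parametrizing — the fixed-point equation $g_{a,b}(z) = z$ reads $z^3 + (3a^2+1)z - b = 0$ (up to sign; let me be careful with the $-z^3$), whose roots are $x$ and $z_0, \overline{z_0}$ with $x + 2\re(z_0) = 0$ and $x \cdot |z_0|^2 = \pm b$, and the multiplier at $z_0$ is $-3(z_0^2 + a^2)$. Writing $z_0 = -x/2 + i t$ with $t > 0$, one computes $|g_{a,b}'(z_0)|^2 = 9\,|z_0^2+a^2|^2$ as an explicit polynomial in $x, t, a$, and the third symmetric relation among the roots ties $t$ to $x$ and $a$. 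Substituting, I expect $|g_{a,b}'(z_0)|^2 - 1$ to come out as a manifestly positive expression (a sum of squares or a product of positive terms) using $a \ge 0$; if a genuinely degenerate locus appears I would cross-check against the known structure of $\operatorname{Per}_1(\lambda)$ curves. I would budget most of the write-up effort for making this computation clean, perhaps by first normalizing via the affine conjugacy to $h_{a,b}(z) = z^3 - 3a^2 z + bi$ if that makes the algebra more symmetric.
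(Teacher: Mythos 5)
Your proposal is correct, but it proves the lemma by a genuinely different route than the paper. The paper's argument is dynamical: it first rules out multiplier $1$ via the discriminant of $g_{a,b}(z)-z$, then assumes a non-real indifferent fixed point and derives a contradiction from external rays --- the fixed rays at angles $0,1/2$ must land at the real fixed point $x$, the period-$2$ rays at angles $1/4,3/4$ lie on $\mathbb{R}$ and also land at $x$ (after showing $K(g_{a,b})\cap\mathbb{R}=\{x\}$ using the count of non-repelling cycles), and a periodic point cannot receive periodic rays of two different periods. Your approach replaces all of this with the fixed-point index formula or with direct algebra, and both of your routes do close. For the index route: the three finite fixed points are simple (a non-real multiple root of the cubic would force four roots by conjugation, and $g_{a,b}'(x)=-3(x^2+a^2)\le 0\ne 1$), so $\sum 1/(1-\lambda_j)=0$; if $|\lambda|=1$, $\lambda\ne 1$, the conjugate pair contributes $2\re\bigl(1/(1-\lambda)\bigr)=1$, forcing $g_{a,b}'(x)=2$, which contradicts $g_{a,b}'(x)\le 0$ (you should cite this inequality here rather than the vaguer ``relevant regime''). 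For the algebraic route, the computation you anticipate does come out manifestly positive: with $z_0=-x/2+it$, the relations among the roots of $z^3+(3a^2+1)z-b$ give $t^2=\tfrac{3}{4}x^2+3a^2+1$, whence $z_0^2+a^2=-(\tfrac{1}{2}x^2+2a^2+1)-ixt$ and $|g_{a,b}'(z_0)|^2=9\bigl[(\tfrac{1}{2}x^2+2a^2+1)^2+x^2t^2\bigr]\ge 9$, so the non-real pair is always repelling with multiplier modulus at least $3$. Your treatment of the real fixed point ($g_{a,b}'(x)=-3(x^2+a^2)\le 0$, hence $=-1$ if indifferent) is also cleaner than the paper's discriminant computation. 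What the paper's ray-landing argument buys is independence from explicit coefficient computations; what yours buys is elementarity and a sharper quantitative conclusion (the non-real fixed points are uniformly strongly repelling).
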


\begin{proof}
 First observe that there is no parabolic fixed point of
 multiplier one.
 In fact, fixed points are the roots of
 \[
 g_{a,b}(z)-z=-z^3-(3a^2+1)z+b.
 \]
 If a fixed point is parabolic of multiplier one, then the discriminant of $g_{a,b}(z)-z$ would vanish: i.e.,\
 \[
 27b^2=-4(3a^2+1)^3.
 \]
 Clearly, there is no real $(a,b)$ satisfying this equation, and hence $g_{a,b}$ cannot have a parabolic fixed point of multiplier $1$.

 Assume an indifferent fixed point $y$ is not real. Then there is also a symmetric (non-real) indifferent fixed point $\overline{y}$. Since we have already seen that the common multiplier of $y$ and $\overline{y}$ is not equal to $1$, the invariant external rays (i.e.,\ of angles $0$ and $1/2$) cannot land at $y$ and $\overline{y}$. Therefore, those rays must land at the other fixed point $x$ on the real axis.
 
 The critical points $\pm ai$ of $g_{a,b}$ are on the imaginary axis.
 Therefore, $g_{a,b}$ is monotone decreasing and
 $g_{a,b}^{\circ 2}$ is monotone increasing on $\R$.
 If $K(g_{a,b}) \cap \R$ contains an interval,
 then there must be a non-repelling fixed point for $g_{a,b}^{\circ 2}$ on $\mathbb{R}$. This is
 impossible because we already have two non-repelling cycles (in fact, fixed points).
 Therefore, we have $K(g_{a,b}) \cap \R=\{x\}$.
 By symmetry, the external rays at angles $1/4$ and $3/4$ are contained in 
 the real line (these rays have period $2$). Hence they both land at $x$.
 Therefore, $x$ is the landing point of periodic external rays of different
 periods, which is a contradiction.

 Therefore, any indifferent fixed point must be real, and since its multiplier is also real and not equal to one, 
 it is equal to $-1$.
\end{proof}

\begin{figure}[ht!]
\begin{minipage}{0.5\linewidth}
\begin{center}
\includegraphics[width=0.96\linewidth]{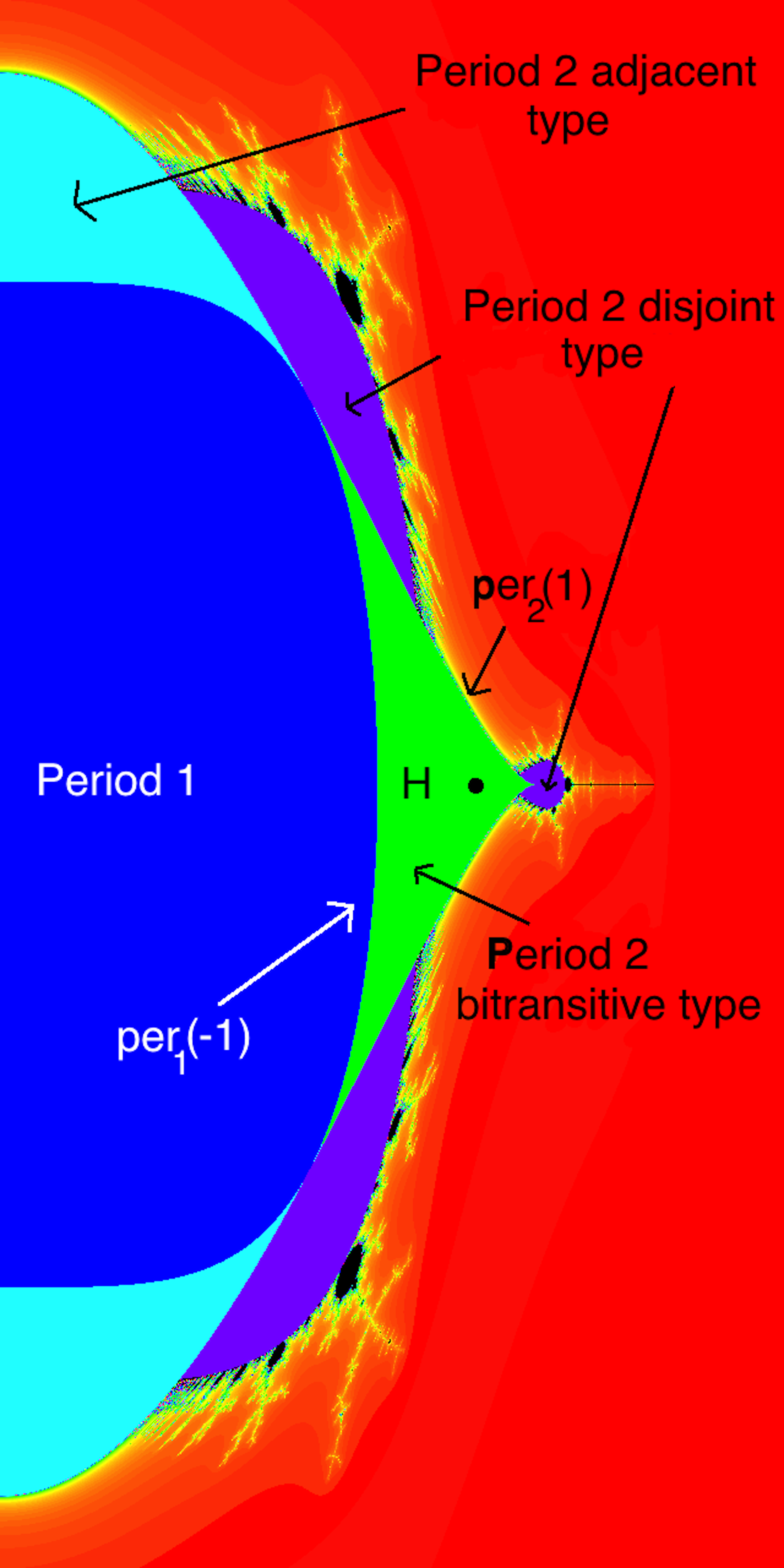}
\end{center}
\end{minipage}
\begin{minipage}{0.49\linewidth}
\includegraphics[width=0.96\linewidth]{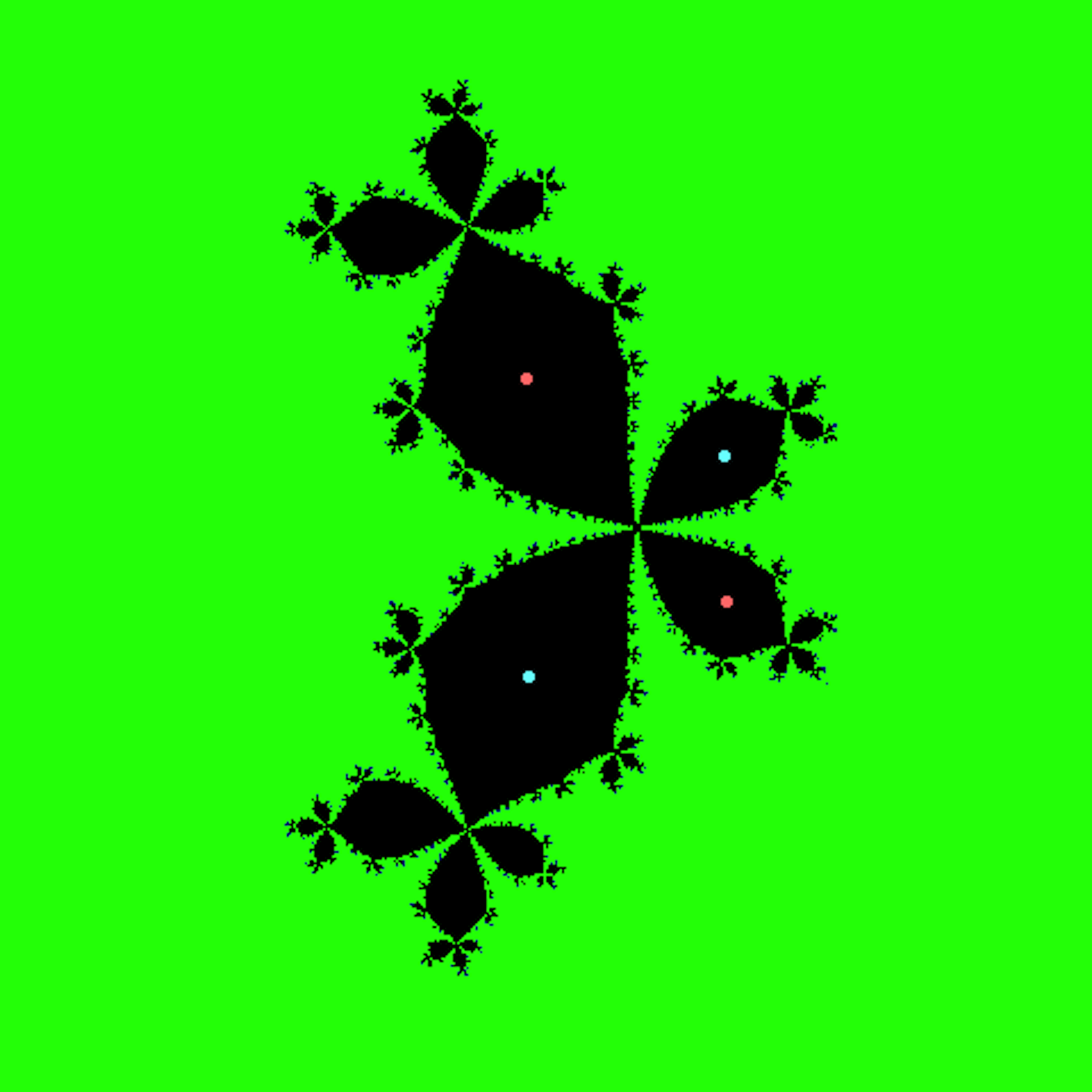}
\includegraphics[width=0.96\linewidth]{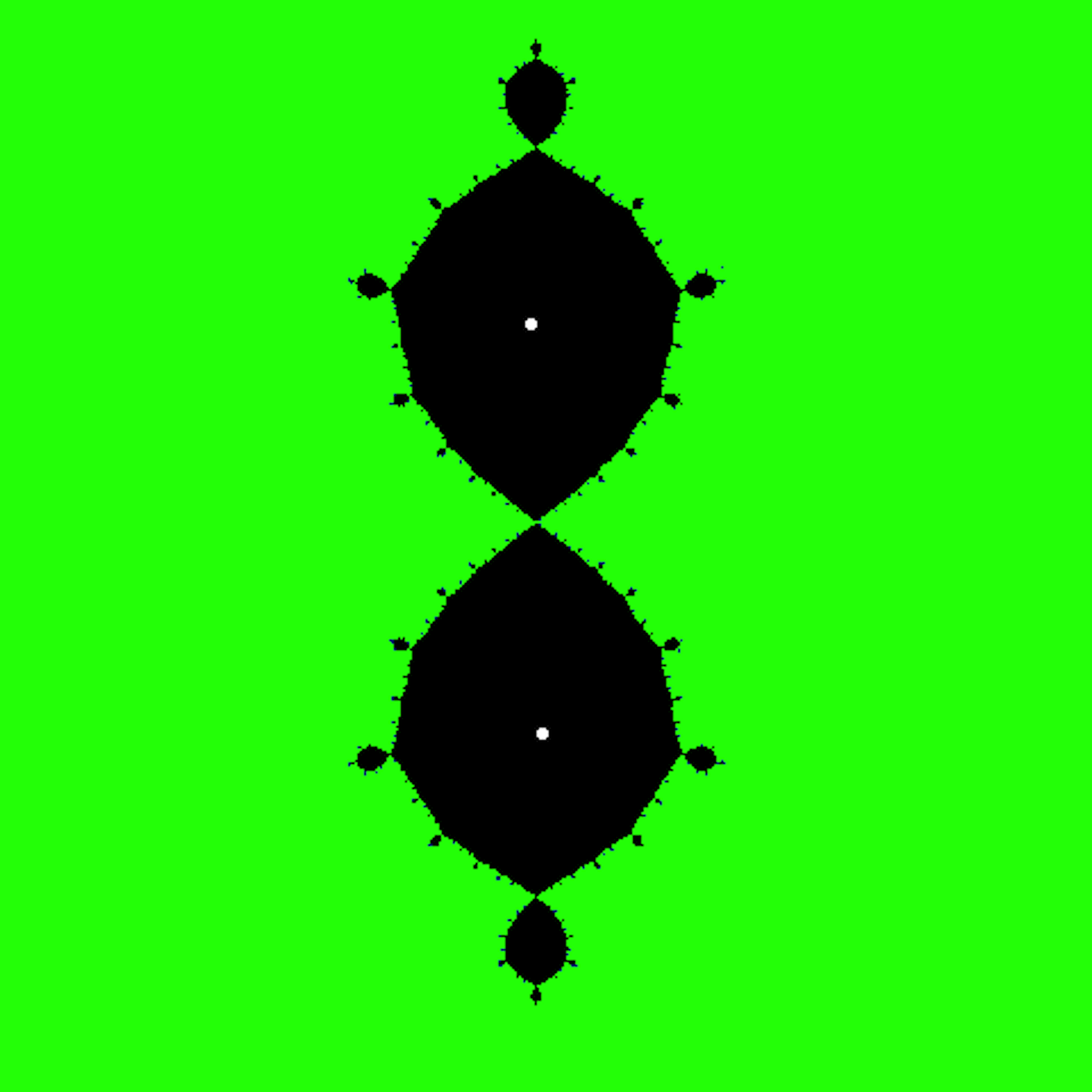}
\end{minipage}
\caption{Left: A cartoon of the parameter space of the family $\mathcal{G}$ highlighting the period $1$ and period $2$ components. For $(a_0,b_0)=(\frac{1}{\sqrt{2}},0)$, the corresponding renormalization locus $\mathcal{R}(a_0, b_0)$ fails to be compact precisely along a sub-arc of $\per_1(-1)\cup\per_2(1)$. Top right: The filled Julia set of the center of a disjoint-type hyperbolic component of period $2$ with the two distinct super-attracting $2$-cycles marked. Bottom right: The filled Julia set of the center $(\frac{1}{\sqrt{2}},0)$ of the unique bitransitive hyperbolic component of period $2$ with its super-attracting $2$-cycle marked. The two periodic bounded Fatou components touch at the origin, so $g_{\frac{1}{\sqrt{2}},0}$ is not primitive.}
\label{cubic_para}
\end{figure}

Therefore the set of parameters with indifferent fixed points
is equal to $\per_1(-1)$:
\begin{equation}
\begin{split}
 \per_1(-1) &= \{(a,b)\in \mathbb{R}_{\geq 0}\times \mathbb{R}: \textrm{Res}_z\left(g_{a,b}(z)-z,\ g_{a,b}'(z)+1\right)=0\} \\
 &= \{(a,b)\in \mathbb{R}_{\geq 0}\times \mathbb{R}: 4(3a^2-1)(3a^2+2)^2 + 27b^2=0\}. 
 \end{split}
  \label{eqn:per_1(-1)}
\end{equation}
Here, $\textrm{Res}_z(P(z),Q(z))$ stands for the resultant (of the two polynomials $P(z)$ and $Q(z)$), which is a polynomial in the coefficients of $P$ and $Q$ that vanishes if and only if $P$ and $Q$ have a common root.

Let $\Phi_1(a)=-4(3a^2-1)(3a^2+2)^2$.
Therefore, the (unique) hyperbolic component with attracting fixed point
is defined by
\[
 \cH_1 = \{(a,b)\in \mathbb{R}^2 :\ a \in [0,\textstyle{\frac{1}{\sqrt{3}}}),\ 27b^2 < \Phi_1(a)\}.
\]

We end this subsection by noting that there is only one bitransitive hyperbolic component of period $2$ in the family $\mathcal{G}$.

\begin{proposition}\label{bitran_center}
There is exactly one bitransitive hyperbolic component of period $2$ in the family $\mathcal{G}$, and the center of this component is $(1/\sqrt{2},0)$.
\end{proposition}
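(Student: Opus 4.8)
The plan is to directly translate the bitransitivity condition for period $2$ into a system of polynomial equations in $(a,b)$ and show it has a unique solution with $a\geq 0$, $b\in\mathbb{R}$, namely $(1/\sqrt{2},0)$. Recall that the center of a bitransitive component of period $2$ is a postcritically finite parameter whose two critical points $c_1=ia$ and $c_2=-ia$ form a superattracting cycle of period $2$: concretely, $g_{a,b}(ia)=-ia$ and $g_{a,b}(-ia)=ia$. First I would compute $g_{a,b}(ia)=-(ia)^3-3a^2(ia)+b = ia^3-3ia^3+b = b-2ia^3$. Setting this equal to $-ia$ forces the real and imaginary parts to vanish separately: $b=0$ and $2a^3=a$, i.e. $a(2a^2-1)=0$. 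Since the critical points must be distinct for a bitransitive component (the two critical points lie in two different Fatou components), we need $a\neq 0$, hence $a^2=1/2$ and $a=1/\sqrt{2}$. One then checks that with $b=0$, $a=1/\sqrt2$ the second condition $g_{a,b}(-ia)=ia$ is automatically satisfied by the conjugation symmetry $g_{a,b}\circ\iota = \iota\circ g_{a,b}$ (or by a direct identical computation with the sign of $i$ reversed). So $(1/\sqrt2,0)$ is the unique candidate, and it is genuinely a superattracting $2$-cycle through both critical points, hence the center of a bitransitive period $2$ component.

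The remaining point is to rule out the possibility that some other period $2$ bitransitive component exists whose center does not have the two critical points forming the $2$-cycle itself, but rather has each critical point eventually mapping (after possibly different numbers of iterations) into a common period $2$ cycle of bounded Fatou components — i.e. to check that "bitransitive of period $2$" really does force $n_1=n_2=1$ in the notation of the introduction. Here the key is that the immediate basin of the attracting $2$-cycle consists of exactly two Fatou components $V_0, V_1$ with $g_{a,b}(V_0)=V_1$, $g_{a,b}(V_1)=V_0$, and bitransitivity means $c_1\in V_0$, $c_2\in V_1$ (up to relabeling). Each of $V_0,V_1$ contains a critical point of $g_{a,b}^{\circ 2}$; since $g_{a,b}$ has only the two critical points $\pm ia$, and each $V_j$ must contain at least one of them for $g_{a,b}^{\circ 2}\colon V_j\to V_j$ to be a proper map of degree $\geq 2$, we get $c_1\in V_0$, $c_2\in V_1$ exactly (no critical point to spare). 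At the center, the attracting cycle is superattracting, so the cycle points coincide with the critical points; thus $ia$ and $-ia$ are the two points of the period $2$ cycle. This returns us to the equations solved above.

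The step I expect to be the main (though still mild) obstacle is the bookkeeping in the previous paragraph: being careful that the definition of "bitransitive" (each polynomial in $H$ has a unique attracting cycle in $\mathbb{C}$ with the two critical points in two different components of the immediate basin) genuinely pins down the combinatorics so that the center is characterized by $\{g_{a,b}(ia),g_{a,b}(-ia)\}$ being a $2$-cycle through the critical points — in particular that one cannot have, say, $c_1$ and $c_2$ in the same component, or the cycle of basin components having length $>2$ while the Fatou component cycle has length $2$. Once that combinatorial reduction is in place, the computation is the three-line resultant-free calculation above: $b=0$ and $2a^2=1$, with $a=0$ excluded because the critical points must be distinct. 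I would also remark that at $(1/\sqrt2,0)$ the two periodic Fatou components touch at the origin (as noted in Figure~\ref{cubic_para}), so this component is non-primitive, but this does not affect the counting argument. Thus there is exactly one bitransitive hyperbolic component of period $2$, with center $(1/\sqrt2,0)$, as claimed. \qed
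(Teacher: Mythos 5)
Your proposal is correct and follows essentially the same route as the paper: the paper's proof is exactly the computation $g_{a_0,b_0}(ia_0)=-ia_0 \implies b_0+i(a_0-2a_0^3)=0$, forcing $b_0=0$ and $a_0=1/\sqrt{2}$ since $a_0>0$, followed by the remark that each hyperbolic component is determined by its center. Your additional paragraph justifying why the center of a period-$2$ bitransitive component must have the two critical points themselves forming the $2$-cycle is left implicit in the paper (it follows from the description of bitransitive centers in the introduction), so you are if anything more careful than the printed argument.
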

\begin{proof}
Let $(a_0,b_0)$ be the center of a bitransitive hyperbolic component of period $2$ in the family $\mathcal{G}$. Since $g_{a_0,b_0}$ must have two distinct critical points, we have that $a_0>0$. Moreover, it follows from our assumption that $$g_{a_0,b_0}(ia_0)=-ia_0\ \implies\ b_0+i(a_0-2a_0^3)=0.$$ Since $b_0$ is real, and $a_0$ is positive, it now follows that $(a_0,b_0)=(1/\sqrt{2},0)$. Since each hyperbolic component has a center, the result now follows.
\end{proof}

\begin{remark}
The unique bitransitive hyperbolic component of period $2$ touches the unique period $1$ hyperbolic component along a part of $\per_1(-1)$. There are three disjoint-type hyperbolic components of period $2$ that `bifurcate' from the unique period $2$ bitransitive hyperbolic component across sub-arcs of $\per_2(1)$. Furthermore, there are two adjacent-type hyperbolic components each of which touches the unique period $1$ hyperbolic component along a sub-arc of $\per_1(-1)$, and a disjoint-type hyperbolic component of period $2$ along a sub-arc of $\per_2(1)$ (see Figure~\ref{cubic_para}). The proofs of these facts are highly algebraic, and we omit them here. 
\end{remark}

\subsection{Centers of Bitransitive Components}\label{sec_bitran}
Since we will be concerned with renormalizations based at bitransitive hyperbolic components of the family $\mathcal{G}$, we need to take a closer look at the dynamics of the centers of such components. Throughout this subsection, we assume that $(a_0,b_0)$ is the center of a bitransitive hyperbolic component of period $2n$ (necessarily even due to the symmetry with respect to the real line); i.e.,\ $g_{a_0,b_0}^{\circ 2n}(\pm ia_0)=\pm ia_0$. Let $k$ be the smallest positive integer such that $g_{a_0,b_0}^{\circ k}(ia_0)=-ia_0$. Then $\iota\circ g_{a_0,b_0}^{\circ k}(ia_0)=ia_0$; i.e.,\ $g_{a_0,b_0}^{\circ k}\circ\iota(ia_0)=ia_0$, and $g_{a_0,b_0}^{\circ k}(-ia_0)=ia_0$. Therefore, $g_{a_0,b_0}^{\circ 2k}(ia_0)=ia_0$. This implies that $k=n$.

\begin{lemma}\label{real_fixed}
$K(g_{a_0,b_0})$ intersects $\mathbb{R}$  at a single point, which is the unique real fixed point $x$ of $g_{a_0,b_0}$.
\end{lemma}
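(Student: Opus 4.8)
The statement to prove is Lemma~\ref{real_fixed}: for the center $(a_0,b_0)$ of a bitransitive hyperbolic component of period $2n$ in the family $\mathcal{G}$, the filled Julia set $K(g_{a_0,b_0})$ meets $\mathbb{R}$ only at the unique real fixed point $x$. The plan is to exploit the real one-dimensional dynamics of $g_{a_0,b_0}$ on $\mathbb{R}$, which we already understand well because the critical points $\pm ia_0$ lie off the real axis. Concretely, $g_{a_0,b_0}$ has no critical point on $\mathbb{R}$ (since $a_0>0$ by Proposition~\ref{bitran_center}), so $g_{a_0,b_0}\colon\mathbb{R}\to\mathbb{R}$ is a strictly monotone (decreasing, because the leading coefficient is $-1$) homeomorphism of $\mathbb{R}$, and hence $g_{a_0,b_0}^{\circ 2}$ is strictly increasing on $\mathbb{R}$. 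As noted in the proof of Lemma~\ref{indiff_fixed}, $g_{a_0,b_0}$ has a unique real fixed point $x$, which is necessarily fixed by $g_{a_0,b_0}^{\circ 2}$ as well.

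First I would argue that $K(g_{a_0,b_0})\cap\mathbb{R}$ cannot contain a nondegenerate interval. Suppose it did; since $K$ is forward-invariant and compact, and $g_{a_0,b_0}^{\circ 2}$ is an increasing homeomorphism of $\mathbb{R}$, the intersection $K\cap\mathbb{R}$ would be a compact interval $[\alpha,\beta]$ with $\alpha<\beta$, invariant under the increasing map $g_{a_0,b_0}^{\circ 2}$; its endpoints would then be fixed points of $g_{a_0,b_0}^{\circ 2}$ on $\mathbb{R}$. But an increasing real map on a bounded interval that maps endpoints to endpoints and is a homeomorphism either has two distinct fixed endpoints, forcing at least two fixed points of $g_{a_0,b_0}^{\circ 2}$ on $\mathbb{R}$ — one of which must have multiplier $\le 1$ and hence be non-repelling — or swaps the endpoints, producing a real $2$-cycle of $g_{a_0,b_0}$ that is again non-repelling by the intermediate-value/monotonicity argument. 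Either way we obtain a non-repelling fixed point or cycle of $g_{a_0,b_0}$ in $\mathbb{R}\setminus\{x\}$ (if the only real fixed point $x$ were non-repelling, this is still an extra non-repelling cycle, namely the real $2$-cycle, distinct from $x$). This is impossible at a center of a hyperbolic component: the only non-repelling cycle of $g_{a_0,b_0}$ is the super-attracting cycle through $\pm ia_0$, which is not real. So $K(g_{a_0,b_0})\cap\mathbb{R}$ is totally disconnected.

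Next I would pin down which points of $\mathbb{R}$ can lie in $K$. Because the two $2$-periodic external rays $R_{(a_0,b_0)}(1/4)$ and $R_{(a_0,b_0)}(3/4)$ are contained in $\mathbb{R}$ (by the reflection symmetry and the Böttcher normalization fixed in the text), they must land at a real point of $J(g_{a_0,b_0})\subset K$; since $x$ is the unique real fixed point and these rays have period dividing $2$, monotonicity of $g_{a_0,b_0}^{\circ 2}$ forces them to land at $x$ itself (a real $2$-cycle of $J$ would again give an extra non-repelling cycle, as above, or can be ruled out directly by the same monotonicity argument). Now suppose $y\in K\cap\mathbb{R}$ with $y\ne x$. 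Then $y$ lies strictly on one side of $x$, say $y>x$ (the case $y<x$ is symmetric, or can be handled by applying the conjugacy $z\mapsto -z$ which sends $g_{a_0,b_0}$ to $g_{a_0,-b_0}$). The landing rays at angles $1/4,3/4$ separate the plane; together with the structure of $K$ on the real line and the fact that $g_{a_0,b_0}^{\circ 2}$ is increasing with $x$ its only real fixed point, the forward orbit $\{g_{a_0,b_0}^{\circ 2j}(y)\}_j$ is monotone and bounded (it stays in the compact set $K\cap\mathbb{R}$), hence converges to a real fixed point of $g_{a_0,b_0}^{\circ 2}$, which must be $x$. But then $y$ lies in the basin of the (real) fixed point $x$; since $x$ is repelling (being the landing point of periodic rays, or just because it is not the super-attracting cycle), $y$ in its basin forces $y=x$, contradiction. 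Hence $K(g_{a_0,b_0})\cap\mathbb{R}=\{x\}$.

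The main obstacle I anticipate is the careful bookkeeping in the middle step: ruling out real $2$-cycles and verifying that the hypothetical invariant interval or orbit genuinely produces a \emph{distinct} non-repelling cycle, rather than just recovering $x$. The cleanest way around this is probably to phrase everything in terms of the increasing homeomorphism $F:=g_{a_0,b_0}^{\circ 2}|_{\mathbb{R}}$ and its unique real fixed point $x$, using the elementary fact that for an increasing homeomorphism of $\mathbb{R}$ with a single fixed point $x$, every orbit in $\mathbb{R}$ is monotone and either escapes to $\pm\infty$ or converges to $x$ — so that any bounded orbit converges to $x$, and any point of $K\cap\mathbb{R}$ has bounded orbit and thus lies in the (repelling) basin of $x$, forcing it to equal $x$. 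This packaging avoids separate interval/cycle case analysis and makes the non-repelling-cycle contradiction transparent.
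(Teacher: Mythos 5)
Your argument is correct in substance, but it takes a different route from the paper, and one intermediate step is imprecise in a way worth flagging. The paper's proof begins with a topological observation that you do not use: since $K(g_{a_0,b_0})$ is connected, full, compact, and symmetric with respect to $\mathbb{R}$, the intersection $K(g_{a_0,b_0})\cap\mathbb{R}$ is automatically \emph{connected}, hence either the single point $\{x\}$ or a nondegenerate interval $[p,q]$. In the latter case the endpoints are the landing points of the real rays at angles $1/4$ and $3/4$, hence a repelling $2$-cycle, and the monotonicity of $g_{a_0,b_0}^{\circ 2}$ on the invariant interval $[p,q]$ with repelling endpoints forces a non-repelling real cycle inside, contradicting the fact that both critical points are periodic and non-real. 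Your middle step asserts that if $K\cap\mathbb{R}$ contains a nondegenerate interval then it \emph{is} a compact interval $[\alpha,\beta]$, which does not follow merely from forward-invariance and compactness (a priori $K\cap\mathbb{R}$ could have many components, and $g^{\circ 2}$ could permute them); the fullness-plus-symmetry observation is exactly what the paper uses to supply this. However, this gap is moot, because the packaging in your final paragraph is a complete and self-contained proof that bypasses the interval analysis entirely: $F:=g_{a_0,b_0}^{\circ 2}\vert_{\mathbb{R}}$ is an increasing homeomorphism; any fixed point of $F$ other than the repelling point $x$ would, by the intermediate-value argument on $F-\mathrm{id}$, produce a non-repelling real cycle of $g_{a_0,b_0}$ of period $1$ or $2$, which is impossible since the unique non-repelling cycle is the non-real superattracting one; hence $x$ is the only real fixed point of $F$, it is repelling, so $F(t)>t$ for $t>x$ and $F(t)<t$ for $t<x$, and every real orbit other than that of $x$ escapes to infinity and lies outside $K$. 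This version is arguably more elementary (pure one-dimensional dynamics, no appeal to ray landing or to the connectivity of $K\cap\mathbb{R}$), at the cost of having to rule out extra real fixed points of $F$ explicitly, which the paper's reduction to a single invariant interval with prescribed endpoints avoids.
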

\begin{proof}
Since $K(g_{a_0,b_0})$ is connected, full, compact, and symmetric with respect to the real line, its intersection with the real line is either a singleton $\{x\}$, or an interval $\left[p,q\right]$ with $q>p$. We assume the latter case. Then $p$ and $q$ are the landing points of the dynamical rays of $g_{a_0,b_0}$ at angles $1/4$ and $3/4$. So $\{p,q\}$ is a repelling $2$-cycle (cannot be parabolic as both critical points are periodic). Since $g_{a_0,b_0}$ is a real polynomial, and has no critical point on $\left[p,q\right]$, $g_{a_0,b_0}:\left[p,q\right]\rightarrow\left[p,q\right]$ is a strictly monotone map. But $g_{a_0,b_0}'(z)=-3(z^2+a_0^2)$, which is negative for $z$ in $\mathbb{R}$. Thus $g_{a_0,b_0}$ is strictly decreasing, and $g_{a_0,b_0}^{\circ 2}$ is strictly increasing on $\{p,q\}$. As $\{p,q\}$ is a repelling $2$-cycle, it follows that $\left[p,q\right]$ contains a non-repelling cycle of $g_{a_0,b_0}$. This is impossible because both critical points $\pm ia_0$ are periodic, and away from the real line. Hence, $\mathbb{R}\cap K(g_{a_0,b_0})= \{x\}$.
\end{proof}

\begin{definition}\label{primitive_def}
We say that a polynomial (respectively, anti-polynomial) $P$ with connected Julia set is \emph{primitive} if, for all distinct and bounded Fatou components $U$ and $V$ of $P$, we have that $\overline{U}\cap\overline{V}=\emptyset$.
\end{definition}

According to \cite[Theorem~D]{IK}, primitivity of the center of a hyperbolic component plays a key role in the study of the corresponding straightening map (in fact, it is shown there that primitivity is equivalent to the domain of the corresponding straightening map being non-empty and compact). It will thus be useful to know when the critically periodic polynomial $g_{a_0,b_0}$ is primitive. We answer this question in the following two lemmas.

Let us first discuss the special case when $n=1$. By Lemma~\ref{bitran_center}, the parameter $(a_0,b_0)=(1/\sqrt{2},0)$ is the center of the unique period $2$ bitransitive hyperbolic component of $\mathcal{G}$. More precisely, $g_{a_0,b_0}(ia_0)=-ia_0$, and $g_{a_0,b_0}(-ia_0)=ia_0$. Let $U_1$ and $U_2$ be the Fatou components of $g_{a_0,b_0}$ containing $ia_0$ and $-ia_0$ respectively. 

\begin{lemma}[The $n=1$ Case]\label{one_not_primitive}
$\partial U_1\cap\partial U_2=\{0\}$. In particular, $g_{\frac{1}{\sqrt{2}},0}$ is not primitive.
\end{lemma}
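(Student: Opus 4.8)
The plan is to locate the critical orbits of $g_{a_0,b_0}$ with $(a_0,b_0)=(1/\sqrt 2,0)$ explicitly and to use the real and rotational symmetries of the map to pin down the intersection of the two characteristic Fatou components. First I would compute the critical orbit: with $a_0=1/\sqrt2$ and $b_0=0$ we have $g(z)=-z^3-\tfrac32 z$, $g(ia_0)=-ia_0$, and $g(-ia_0)=ia_0$, so $\{ia_0,-ia_0\}$ is a superattracting $2$-cycle and $0$ lies on the real fixed point's side of the dynamics; in fact $g(0)=0$, so the origin is a fixed point, and since $g'(0)=-\tfrac32$ it is repelling. Thus $0\in\partial U_1\cap\partial U_2$ is the natural candidate point, and the first task is to show $0$ does lie on both boundaries: since $U_1,U_2$ are the immediate basins of the $2$-cycle and $g$ maps a neighborhood of $0$ (minus $0$) onto a pointed neighborhood of $0$ in a $3$-to-$1$ fashion preserving the real axis, one checks that both $U_1$ and $U_2$ accumulate at $0$ (for instance because $g^{\circ 2}$ fixes $0$ with real positive multiplier $9/4>1$, and the two dynamical rays landing at $0$ that bound a sector containing $U_1$ are swapped with those bounding $U_2$ under $\iota$, while $0$ is on the real axis, forcing the closures to meet at $0$).

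Next I would prove the reverse inclusion $\partial U_1\cap\partial U_2\subseteq\{0\}$, which is where the real work lies. The key structural input is Lemma~\ref{real_fixed}: $K(g_{a_0,b_0})\cap\R=\{x\}$, the unique real fixed point. One computes that $x$ is in fact $0$ in this case (the real fixed point of $-z^3-\tfrac32 z$ is $z=0$). I would argue as follows: $U_1$ and $\iota(U_1)=U_2$ are complex conjugate, so any point $w\in\partial U_1\cap\partial U_2$ has $\iota(w)\in\partial U_1\cap\partial U_2$ as well; if such a $w$ is real, then $w\in\overline{U_1}\cap\R\subseteq K(g_{a_0,b_0})\cap\R=\{0\}$, giving $w=0$. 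So it remains to rule out a non-real common boundary point. Here I would use that $U_1\cup U_2\cup\{$common boundary points$\}$ together with its orbit forms the immediate basin, and invoke the standard fact (Fatou component boundaries / no wandering domains, or a counting argument on the valence of the critical points) that two distinct bounded Fatou components of a polynomial can touch at a point only if that point is (pre)periodic and, in the setting of a polynomial-like restriction, eventually maps to a repelling cycle whose multiplier constraints the combinatorics. Concretely: if $w\ne 0$ were a non-real common boundary point, its forward orbit stays in $\partial U_1\cup\partial U_2$; since these two components are permuted by $g$, the orbit of $w$ accumulates on a cycle in $\partial U_1\cap\partial U_2$ that must be non-real, but by the conjugation symmetry that cycle comes in complex-conjugate pairs whose union would then meet $\R$ (by intermediate value / connectedness of $\overline{U_1}\cap\overline{U_2}$, which is connected as an intersection of full compacta along... ) — the cleanest route is to show $\overline{U_1}\cap\overline{U_2}$ is connected and symmetric under $\iota$, hence meets $\R$, hence equals a connected subset of $\R$ contained in $K\cap\R=\{0\}$, so it is exactly $\{0\}$.

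Therefore $\partial U_1\cap\partial U_2=\{0\}$, and since $U_1,U_2$ are distinct bounded Fatou components with $\overline{U_1}\cap\overline{U_2}=\{0\}\ne\emptyset$, the map $g_{1/\sqrt2,0}$ fails Definition~\ref{primitive_def} and is not primitive. I expect the main obstacle to be the argument that $\overline{U_1}\cap\overline{U_2}$ is connected (so that the reduction to real points is legitimate): one wants to say that the intersection of the closures of two Fatou components meeting the real axis, in a real-symmetric picture, is a connected set, which should follow from the fact that $\overline{U_1}$ and $\overline{U_2}$ are each full compact connected sets and from a local analysis near $0$ showing the two closures are "linked" only at $0$; alternatively one can bypass connectedness entirely by directly showing that any common boundary point is a (pre)fixed point of $g^{\circ 2}$ — using that the boundary map $g^{\circ 2}\colon\partial U_1\to\partial U_1$ is, after Böttcher/Riemann uniformization, conjugate to $z\mapsto z^2$ on the circle, so common boundary points of $U_1$ and $U_2$ correspond to a closed invariant subset of the circle under doubling, and a symmetry-plus-landing-rays argument forces that subset to be a single point, necessarily landing at $0$.
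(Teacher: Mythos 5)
Your identification of $0$ as the candidate point and your use of Lemma~\ref{real_fixed} to handle real common boundary points are on the right track, but the reverse inclusion $\partial U_1\cap\partial U_2\subseteq\{0\}$ — which you correctly identify as where the real work lies — is not actually proved by any of the routes you sketch, and this is a genuine gap. The paper disposes of it in one line by citing the standard fact that the boundaries of two distinct bounded Fatou components of a polynomial can meet in at most one point (the usual Jordan-curve argument: two common boundary points would produce a Jordan curve through $U_1$ and $U_2$ whose bounded complementary component is disjoint from the connected, unbounded basin of infinity, forcing Julia set points into the interior of $K$). None of your alternatives substitutes for this. Route (a): even granting that $\overline{U_1}\cap\overline{U_2}$ is connected and $\iota$-symmetric, the deduction ``meets $\R$, hence equals a connected subset of $\R$'' is a non sequitur — a connected set can meet $\R$ in exactly one point and still contain non-real points, so you cannot conclude it lies in $K\cap\R=\{0\}$. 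Route (b): the return map $g^{\circ 2}\colon U_1\to U_1$ has degree $4$, not $2$ (each of $g\colon U_1\to U_2$ and $g\colon U_2\to U_1$ is $2$-to-$1$), and more importantly closed forward-invariant subsets of the circle under $z\mapsto z^4$ are abundant (the whole circle, Cantor sets, finite unions of cycles), so nothing in the argument forces a singleton.

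The forward inclusion $0\in\partial U_1\cap\partial U_2$ is also shakier than it needs to be: the claim that $g$ maps a punctured neighborhood of $0$ onto a punctured neighborhood ``in a $3$-to-$1$ fashion'' is false ($g'(0)=-3/2\neq 0$, so $g$ is a local biholomorphism at $0$), and knowing that the rays landing at $0$ bound a sector containing $U_1$ does not by itself put $0$ in $\overline{U_1}$. The paper's route is more concrete: the symmetry $z\mapsto-\overline{z}$ of $g_{a_0,b_0}$ together with fullness and connectedness of $K$ forces the imaginary segment $\left[-ia_0,ia_0\right]$ into $K$; there are no critical points on the open segment, so $g_{a_0,b_0}$ is strictly decreasing and $g_{a_0,b_0}^{\circ 2}$ strictly increasing there, whence the $g_{a_0,b_0}^{\circ 2}$-orbit of every point of $\left(0,ia_0\right)$ converges to $ia_0$ and $\left(0,ia_0\right]\subset U_1$; since $0$ is repelling it lies in the Julia set, so $0\in\partial U_1$, and symmetrically $0\in\partial U_2$. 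You should restructure the proof around these two ingredients: the interval-dynamics argument on $\left[-ia_0,ia_0\right]$ for the forward inclusion, and the at-most-one-point fact for the reverse inclusion.
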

\begin{proof}

Observe that $g_{a_0,b_0}$ commutes with the reflection with respect to $i\mathbb{R}$, hence $-\iota\left(K(g_{a_0,b_0})\right)=K(g_{a_0,b_0})$. Since $\pm ia_0\in K(g_{a_0,b_0})$, and $K(g_{a_0,b_0})$ is connected, full, compact, it follows that $\left[-ia_0,ia_0\right]\subset K(g_{a_0,b_0})$. Since $0$ is the unique fixed point of $g_{a_0,b_0}$ on the real line, it follows by Lemma~\ref{real_fixed} that $\mathbb{R}\cap K(g_{a_0,b_0})= \{0\}$. Since $0$ is repelling, it belongs to the Julia set.

$g_{a_0,b_0}$ has no critical point in $(-ia_0,ia_0)$, so $g_{a_0,b_0}$ is strictly monotone there. Since $g_{a_0,b_0}'(0)=-3/2<0$, $g_{a_0,b_0}|_{\left[-ia_0,ia_0\right]}$ is strictly decreasing and $g_{a_0,b_0}^{\circ 2}|_{\left[-ia_0,ia_0\right]}$ is strictly increasing. It is now an easy exercise in interval dynamics to see that the $g_{a_0,b_0}^{\circ 2}$-orbit of each point in $\left(0,ia_0\right)$ converges to the super-attracting point $ia_0$, and hence $\left(0,ia_0\right]\subset U_1$. Since $0$ is in the Julia set, it follows that $0\in \partial U_1$.

A similar argument shows that $0\in \partial U_2$. But the boundaries of two bounded Fatou components of a polynomial cannot intersect at more than one point. This proves that $\partial U_1\cap\partial U_2=\{0\}$ (compare bottom right of Figure~\ref{cubic_para}).
\end{proof}

Now let $n>1$. 

\begin{lemma}[The $n>1$ Case]\label{not_one_primitive}
If $n$ is larger than $1$, then $g_{a_0,b_0}$ is primitive.
\end{lemma}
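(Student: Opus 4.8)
The plan is to argue by contradiction: suppose $g_{a_0,b_0}$ is not primitive, so there exist two distinct bounded Fatou components $U$ and $V$ with $\overline U\cap\overline V\neq\emptyset$. Since all bounded Fatou components of the critically periodic polynomial $g_{a_0,b_0}$ are iterated preimages of the two components $U_1,U_2$ containing the critical points $ia_0,-ia_0$, and since the boundaries of the grand orbit of these components are organized by the dynamics, the first step is to reduce the non-primitivity to a statement about the cycle of periodic Fatou components. Concretely, if two bounded Fatou components touch, then pulling back along the dynamics (using that $g_{a_0,b_0}$ is a local homeomorphism away from its critical points, and analyzing what happens at the two critical points on the imaginary axis) one produces a touching among periodic Fatou components, i.e. two components in the $2n$-cycle $U_1\mapsto g(U_1)\mapsto\cdots$ whose closures meet. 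A touching point of two periodic Fatou components is necessarily a repelling (hence not parabolic, as the critical orbits are periodic) periodic point lying on the boundary of both, fixed by the first-return map of each component.

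The second step exploits the real-symmetry. Since $g_{a_0,b_0}$ commutes with $\iota$, the filled Julia set is symmetric about $\R$; and since $g_{a_0,b_0}$ also commutes with $-\iota$ (reflection in $i\R$, because $-g_{a_0,b_0}(-z)=g_{a_0,-b_0}(z)$ and here $b_0$ may or may not be $0$ — one should check which symmetry actually survives at the bitransitive center, but by Lemma~\ref{real_fixed} we know $\R\cap K=\{x\}$ and the two critical points $\pm ia_0$ are complex-conjugate with conjugate orbits), the pair $U_1,U_2$ is interchanged by $\iota$. If $\overline{U_1}\cap\overline{U_2}\neq\emptyset$, the touching point $w$ would satisfy $w=\iota(w)$, hence $w\in\R\cap K=\{x\}$; but $x$ is the landing point of the fixed rays $R(0),R(1/2)$ (it is a fixed point, not a point of period dividing $2n$ in the relevant cyclic sense unless $x$ itself is the touching point, which is excluded since $x$ is not on the boundary of any bounded Fatou component as $K\cap\R$ is the single repelling fixed point and contains no arc). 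So $\overline{U_1}\cap\overline{U_2}=\emptyset$. This is exactly the place where the hypothesis $n>1$ must be used to rule out the remaining possibility: a touching between $U_1$ (or $U_2$) and some other component in its own forward cycle.

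The third step is the combinatorial core. Suppose $\overline{U_1}\cap\overline{g_{a_0,b_0}^{\circ j}(U_1)}\ni w$ for some $0<j<2n$. Apply $g_{a_0,b_0}^{\circ(2n-j)}$: since $g_{a_0,b_0}^{\circ 2n}$ maps $U_1$ to itself as a degree-$4$ branched cover (branched only at $ia_0$) and maps $g_{a_0,b_0}^{\circ j}(U_1)$ to $U_1$, we get a touching of $U_1$ with the component $g_{a_0,b_0}^{\circ(2n-j)}(U_1)=g_{a_0,b_0}^{-j}(U_1)$-type component — more usefully, track the critical point: the only way the cyclic structure of the $2n$-cycle produces a self-touching of the cycle is if the itinerary forces two components of the cycle to share a boundary repelling point $w$ which is then periodic of period dividing $2n$. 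Here I would invoke the standard structure of such touchings (as in Milnor's and in the Nakane–Schleicher analysis of multicorn hyperbolic components, and as used implicitly in Lemma~\ref{one_not_primitive}): for $n=1$ the cycle has length $2$ and the two components $U_1,U_2$ are forced to touch at the repelling fixed point $0$ (the situation analyzed in the previous lemma via interval dynamics on $(-ia_0,ia_0)$). For $n>1$ the analogous interval-dynamics/combinatorial argument shows there is no repelling periodic point available on $\partial U_1$ that is also on the boundary of another component of the cycle: the candidate touching point would again have to be $\iota$-symmetric (by an argument parallel to step two, since $U_1$ and $U_2$ are swapped and their forward images are correspondingly swapped), hence lie in $\R\cap K=\{x\}$, contradicting that $x$ is a fixed point not lying on any bounded Fatou component's boundary. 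Combining: no two bounded Fatou components touch, so $g_{a_0,b_0}$ is primitive.

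\medskip

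I expect the \textbf{main obstacle} to be step three — precisely pinning down why, for $n>1$, a self-touching within the $2n$-cycle is impossible. The clean statement is that any touching point is a repelling periodic point that is a common boundary fixed point of the first-return maps of two distinct components of the cycle, and that such a configuration forces the point to be symmetric under the relevant anti-holomorphic involution and hence to lie on $\R$, which is incompatible with Lemma~\ref{real_fixed} and with the fact that $x$ borders no bounded component. The delicate point is handling the critical components $U_1,U_2$ themselves (where $g_{a_0,b_0}^{\circ 2n}$ has degree $4$, not $1$), since the usual ``distinct periodic components touch at a point whose orbit visits each component once'' heuristic needs the branched-cover structure accounted for; I would treat this by passing to the polynomial-like restriction $\iota\circ g_{a_0,b_0}^{\circ n}\colon U\to(\iota\circ g_{a_0,b_0}^{\circ n})(U)$ (the anti-quadratic-like map from the Introduction) and using that its filled Julia set $K(\iota\circ g^{\circ n})$, being the Julia set of a degree-$2$ anti-polynomial-like map with both relevant components accounted for, is either a Cantor set or connected, and that the touching structure of the full polynomial's periodic cycle is governed by this renormalization — reducing the $n>1$ case to the already-handled behavior of the renormalized map, for which the relevant components are $U_1$ and its image and the absence of touching follows as these are now the two periodic Fatou components of a \emph{primitive} quadratic anti-polynomial-like map (one must check the renormalization is of primitive type precisely when $n>1$, which is the content of this lemma viewed internally).
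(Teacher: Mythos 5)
Your overall skeleton matches the paper's: reduce to periodic components, use the real symmetry to force a putative touching point onto $\mathbb{R}$, and then contradict Lemma~\ref{real_fixed} together with $n>1$. But the decisive step is missing. Your step two only handles the pair $(U_1,\iota(U_1))$: there the intersection $\partial U_1\cap\partial\iota(U_1)$ is a single point and is $\iota$-invariant, so the touching point is real. For a touching between $U_1$ and $g_{a_0,b_0}^{\circ j}(U_1)$ with $g_{a_0,b_0}^{\circ j}(U_1)\neq\iota(U_1)$, applying $\iota$ sends the touching point to the touching point of a \emph{different} pair of components, so no symmetry is forced and the point need not be real. You assert this case is handled ``by an argument parallel to step two,'' but no such parallel exists, and your fallback via the anti-quadratic-like renormalization is, as you yourself note, circular (primitivity of the renormalization is exactly what is being proved). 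Your parenthetical claim in step two that the real fixed point $x$ ``is not on the boundary of any bounded Fatou component'' is also unjustified as stated — it is false for $n=1$, where $0\in\partial U_1\cap\partial U_2$ — so it cannot be quoted as a fact; it is a consequence of the lemma, not an input to it.

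The ingredient the paper uses to close this gap is an orientation argument. First, a Hubbard tree argument shows that two periodic bounded Fatou components can only touch at their common \emph{root} (the unique boundary point fixed by the first return map that is a cut point of $J$). Now suppose $U_1,\dots,U_r$ are all the periodic components touching at such a point $x$. Since there is a single cycle of periodic components, of length $2n$, and $g_{a_0,b_0}$ commutes with $\iota$, one has $g_{a_0,b_0}^{\circ n}(U_j)=\iota(U_j)$ for every $j$. The map $g_{a_0,b_0}^{\circ n}$ is an orientation-preserving local diffeomorphism near $x$, yet composing with $\iota$ reverses the cyclic order of the $U_j$ around the touching point; this is impossible if $r\geq 3$. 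Hence $r=2$, $U_2=g_{a_0,b_0}^{\circ n}(U_1)=\iota(U_1)$, and only then does your step-two symmetry argument apply to give $x=\iota(x)\in\mathbb{R}\cap K=\{x_0\}$, the real fixed point — contradicting that $x$ has period $n>1$. Without this (or an equivalent) argument ruling out touchings between non-conjugate components of the cycle, your proof does not go through.
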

\begin{proof}
Using a Hubbard tree argument (compare \cite[Lemma~3.4]{NS}), it is easy to see that every periodic bounded Fatou component of $g_{a_0,b_0}$ has exactly one boundary point that is fixed by $g_{a_0,b_0}^{\circ 2n}$ and that is a cut-point of the Julia set. We call this point the \emph{root} of the periodic (bounded) Fatou component. Let $U_1$ and $U_2$ be two distinct Fatou components of $g_{a_0,b_0}$ with $\partial U_1\cap\partial U_2\neq \emptyset$. We can take iterated forward images to assume that $U_1$ and $U_2$ are periodic. Then the intersection $\partial U_1\cap \partial U_2$ consists only of the unique common root $x$ of $U_1$ and $U_2$.
 
Let $U_1,\ U_2,\ \cdots,\ U_r$ be all the periodic components touching at $x$. Since $g_{a_0,b_0}$ commutes with $\iota$ and there is only one cycle of periodic components, it follows that $g_{a_0,b_0}^{\circ n}(U_j)=\iota(U_j)$, for $j=1,\ 2,\ \cdots,\ r$. Moreover, $g_{a_0,b_0}^{\circ n}$ is a local orientation-preserving diffeomorphism from a neighborhood of $x$ to a neighborhood of $\overline{x}$. But if $r\geq 3$, it would reverse the cyclic order of the Fatou components $U_j$ touching at $x$. Hence, $r\leq 2$; i.e.,\ at most $2$ periodic (bounded) Fatou components can touch at $x$. This implies that $x$ has period $n$, and $g_{a_0,b_0}^{\circ n}(U_1)=U_2$. Hence $U_2=\iota(U_1)$. The upshot of this is that $\partial U_1$ and $\iota(\partial U_1)$ intersect at $x$. But by Lemma~\ref{real_fixed}, $x$ must be the unique real fixed point of $g_{a_0,b_0}$. This contradicts the assumption that $n>1$.

Therefore, all bounded Fatou components of $g_{a_0,b_0}$ have disjoint closures, and $g_{a_0,b_0}$ is primitive.
\end{proof}

\subsection{Renormalizations of Bitransitive Components, and Tricorn-like Sets}\label{proof_main_thm}

Let $(a_0,b_0)$ be the center of a bitransitive hyperbolic component $H$ of period $2n$; i.e.,\ $g_{a_0,b_0}^{\circ n}(ia_0) = -ia_0$ and $g_{a_0,b_0}^{\circ n}(-ia_0) = ia_0$ for some $n\geq 1$. Then there exists a neighborhood $U_0$ of the closure of the Fatou component containing $ia_0$ such that $U_0$ is compactly contained in $\iota \circ g_{a_0,b_0}^{\circ n}(U_0)$ with $\iota \circ g_{a_0,b_0}^{\circ n}:U_0\rightarrow (\iota \circ g_{a_0,b_0}^{\circ n})(U_0)$ proper (compare Figure~\ref{real cubic}). Since $\left(\iota \circ g_{a_0,b_0}^{\circ n}\right)\vert_{U_0}$ is an anti-holomorphic map of degree $2$, we have an anti-polynomial-like map of degree $2$ (with a connected filled Julia set) defined on $U_0$. The Straightening Theorem now yields a quadratic anti-holomorphic polynomial (with a connected filled Julia set) that is hybrid equivalent to $(\iota \circ g_{a_0,b_0}^{\circ n})\vert_{U_0}$ \cite[Theorem~5.3]{IM4}. One can continue to perform this renormalization procedure as the real cubic polynomial $g_{a,b}$ moves in the parameter space, and this defines a map from a suitable region in the parameter plane of real cubic polynomials to the Tricorn. We now proceed to define this region.

\begin{definition}[Rational Lamination]
The rational lamination of a holomorphic or anti-holomorphic polynomial $P$ (with connected Julia set) is defined as an equivalence relation on $\mathbb{Q}/\mathbb{Z}$ such that $\theta_1 \sim \theta_2$ if and only if the dynamical rays $R(\theta_1)$ and $R(\theta_2)$ of $P$ land at a common point on the Julia set of $P$. 
\end{definition}

Let $\lambda_{a,b}$ be the rational lamination of $g_{a,b}$. Define the \emph{combinatorial renormalization locus} to be
\begin{align*}
\mathcal{C}(a_0,b_0) &= \{ (a,b)\in \mathbb{R}^2: a\geq 0, \lambda_{a,b} \supset \lambda_{a_0,b_0}\}.
\end{align*}

Since a rational lamination is an equivalence relation on $\mathbb{Q}/\mathbb{Z}$, it is a subset of $\mathbb{Q}/\mathbb{Z} \times \mathbb{Q}/\mathbb{Z}$, and hence, subset inclusion makes sense. By definition, for $(a,b) \in \mathcal{C}(a_0,b_0)$, the external rays at $\lambda_{a_0,b_0}$-equivalent angles for $g_{a,b}$ land at the same point. Hence those rays divide the filled Julia set of $g_{a,b}$ into `\emph{fibers}'. Let $K$ be the fiber containing the critical point $ai$. Then $(\iota\circ g_{a,b}^{\circ n})(K) = K$. We say $g_{a,b}$ is $(a_0,b_0)$-\emph{renormalizable} if there exists an anti-holomorphic quadratic-like restriction $\iota\circ g_{a,b}^{\circ n} : U^{\prime}_{a,b} \rightarrow U_{a,b}$ with filled Julia set equal to $K$. Let the renormalization locus $\mathcal{R}(a_0,b_0)$ with combinatorics $\lambda_{a_0,b_0}$ be:
\begin{center}
$\mathcal{R}(a_0,b_0) = \lbrace (a,b) \in \mathcal{C}(a_0,b_0) : g_{a,b} \hspace{1mm}$ is $(a_0,b_0)$-renormalizable$\rbrace$.
\end{center}

Using \cite[Theorem~5.3]{IM4}, for each $(a,b)\in \mathcal{R}(a_0,b_0)$, we can straighten $\iota\circ g_{a,b}^{\circ n}: U'_{a,b}\rightarrow U_{a,b}$ to obtain a quadratic anti-holomorphic polynomial $f_c$. This defines the straightening map

\begin{center}
 $\begin{array}{rccc}
  \chi_{a_0,b_0}:& \mathcal{R}(a_0,b_0)& \to & \mathcal{M}_2^* \\
   & (a,b) & \mapsto  & c.
  \end{array}$\\
\end{center} 
The proof of the fact that our definition of $\chi_{a_0,b_0}$ agrees with the general definition of straightening maps \cite{IK} goes as in \cite[\S 5]{IM4}. It then follows from \cite[Theorem~B]{IK} that $\chi_{a_0,b_0}: \mathcal{R}(a_0,b_0) \to \mathcal{M}_2^*$ is injective. 

In order to discuss compactness of the renormalization locus $\mathcal{R}(a_0,b_0)$, we have to distinguish between the cases $n=1$ and $n>1$. By Lemma~\ref{not_one_primitive}, $g_{a_0,b_0}$ is primitive whenever $n>1$. Therefore \cite[Theorem~D]{IK} implies that for $n>1$, the renormalization locus $\cR(a_0,b_0)$ is compact, and coincides with the combinatorial renormalization locus $\mathcal{C}(a_0,b_0)$. On the other hand, when $n=1$, Lemma~\ref{one_not_primitive} tells that $g_{a_0,b_0}$ is not primitive. Hence according to \cite[Theorem~D]{IK}, $\cR(a_0,b_0)$ is a proper non-compact subset of $\cC(a_0,b_0)$ for $n=1$. 

Regarding the image of $\cR(a_0,b_0)$, we have the following result for all $n\geq1$.

\begin{proposition}
 \label{prop:real cubic onto hyp}
The image of the straightening map $\chi_{a_0,b_0}$ contains the hyperbolicity locus in $\Int \cM_2^*$.
\end{proposition}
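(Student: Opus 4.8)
The plan is to show that every hyperbolic $c$ in the interior of $\mathcal{M}_2^*$ is hit by $\chi_{a_0,b_0}$ by producing, for each such $c$, a real cubic $g_{a,b}$ that is $(a_0,b_0)$-renormalizable with straightened anti-polynomial-like map hybrid equivalent to $f_c$. First I would recall that the hyperbolicity locus in $\Int\mathcal{M}_2^*$ is an open set each of whose connected components (the hyperbolic components of the Tricorn) contains a critically periodic center; since the map $\chi_{a_0,b_0}$ is already known to be injective, and since a standard argument (as in \cite[\S5]{IM4}) shows that $\chi_{a_0,b_0}$ restricted to the hyperbolicity locus inside its image is open and preserves the hyperbolic dynamics, it suffices to show that each hyperbolic component $W$ of $\mathcal{M}_2^*$ meets the image: indeed, $\chi_{a_0,b_0}^{-1}(W\cap\chi_{a_0,b_0}(\mathcal{R}(a_0,b_0)))$ would then be a nonempty relatively open and closed subset of the (connected) hyperbolic component of $\mathcal{R}(a_0,b_0)$ lying over $W$, so all of $W$ is covered.

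The core of the argument is therefore to realize each center. Fix a hyperbolic component $W$ of the Tricorn with critically periodic center $f_{c_0}$, of period $\ell$ say. I would construct a real cubic $g_{a,b}\in\mathcal{C}(a_0,b_0)$ whose critical point $ia$ is periodic under $\iota\circ g_{a,b}^{\circ n}$ with the combinatorics prescribed by $\lambda_{a_0,b_0}$ together with the additional identifications coming from the center of $W$. Concretely: the $(a_0,b_0)$-renormalization is governed by a fixed puzzle structure (the rays at $\lambda_{a_0,b_0}$-equivalent angles), and imposing that $ia$ be periodic of the appropriate period inside its fiber is a finite system of real-algebraic conditions on $(a,b)$ cutting out a nonempty set — nonemptiness follows because the corresponding hyperbolic component in the cubic parameter plane, namely the one with critical orbit relations matching the renormalized center, exists by the general theory of centers of hyperbolic components (existence of centers with prescribed postcritical combinatorics, compare the discussion of bitransitive and related components in Subsection~\ref{sec_per_one}). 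For such $(a,b)$, the map $\iota\circ g_{a,b}^{\circ n}$ restricts to an anti-quadratic-like map with superattracting cycle, hence $(a,b)\in\mathcal{R}(a_0,b_0)$, and its straightening is a quadratic anti-polynomial with a superattracting periodic critical point whose rational lamination matches that of $f_{c_0}$; by rigidity of critically periodic anti-polynomials (the straightening of a center is a center with the same combinatorics), this straightening is affinely conjugate to $f_{c_0}$, i.e.\ $\chi_{a_0,b_0}(a,b)=c_0\in W$.

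I would then assemble the two halves: surjectivity onto the set of centers, plus the open-and-closed argument above, gives that the image contains every hyperbolic component, hence all of the hyperbolicity locus in $\Int\mathcal{M}_2^*$.

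The main obstacle I anticipate is the nonemptiness step — verifying that for an arbitrary prescribed renormalized combinatorics there genuinely exists a parameter $(a,b)$ in the (real two-dimensional) family $\mathcal{G}$ whose fiber-critical dynamics realizes it. In the holomorphic/Douady–Hubbard setting this is automatic from the surjectivity of straightening onto the Mandelbrot set, but here one must be careful that the real slice $\mathcal{G}$ is not too small: the key point is that the renormalization $\iota\circ g_{a,b}^{\circ n}$ is intrinsically anti-holomorphic and the two cubic critical points are complex conjugate, so the relevant anti-polynomial-like family over $\mathcal{C}(a_0,b_0)$ is a genuine one-(complex-)parameter anti-holomorphic family, full in the sense of \cite{IK}, and fullness is exactly what guarantees that its straightening hits every center of $\mathcal{M}_2^*$; I would invoke \cite[Theorem~B]{IK} (or its proof) to extract this. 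The remaining verifications — that the straightening of a center is a center with matching combinatorics, and the openness of $\chi_{a_0,b_0}$ on the hyperbolic locus — are routine given \cite[Theorem~5.3]{IM4} and the structure theory of hyperbolic components of the multicorns from \cite{NS,MNS}.
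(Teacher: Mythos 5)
There is a genuine gap, and it sits exactly where you anticipate it: the realization step. Everything in your argument reduces to showing that for each hyperbolic $c\in\Int\cM_2^*$ there actually exists a parameter $(a,b)$ in the \emph{real} two-dimensional slice $\mathcal{G}$ whose $(a_0,b_0)$-renormalization straightens to $f_c$ (or to its center), and this is precisely the content of the proposition — so appealing to ``the general theory of centers of hyperbolic components'' is circular, since that theory produces parameters in the complex cubic family $\Poly(3)$ with no guarantee that they lie on the real slice. Your proposed repair — that the anti-polynomial-like maps $\iota\circ g_{a,b}^{\circ n}$ over $\mathcal{C}(a_0,b_0)$ form a full one-complex-parameter anti-holomorphic family — is the right intuition but is never constructed or verified, and \cite[Theorem~B]{IK} is the injectivity statement (it is what the paper uses to show $\chi_{a_0,b_0}$ is injective); it does not yield fullness or surjectivity onto centers. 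The paper closes this gap differently and more cleanly: it complexifies to $\Poly(3)=\{z^3-3Az+b\}$, applies \cite[Theorem~C]{IK} to the straightening map $\widetilde{\chi}_{A_0,b_0i}$ into the biquadratic connectedness locus to produce some $(A,b)\in\C^2$ with $\widetilde{\chi}_{A_0,b_0i}(A,b)=(z^2+\overline{c})^2+c$, and then uses the reflection symmetry $\varphi(z)=-\bar z$ of $P_{A_0,b_0i}$: since $(\bar A,-\bar b)$ is also renormalizable with the same straightening, injectivity (\cite[Theorem~B]{IK}) forces $(\bar A,-\bar b)=(A,b)$, i.e.\ $A\in\R$ and $b\in i\R$, so the realizing parameter lies in $\mathcal{G}$ after all. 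That symmetry-plus-injectivity trick is the missing idea in your write-up.

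Two secondary points. First, your reduction to centers followed by an open-and-closed argument is both unnecessary and delicate: \cite[Theorem~C]{IK} already covers \emph{all} hyperbolic parameters, not just centers, and the openness of $\chi_{a_0,b_0}$ on hyperbolic components that your connectedness argument needs is not free — recall that continuity of $\chi_{a_0,b_0}$ on all of $\mathcal{R}(a_0,b_0)$ is exactly what this paper disproves, so any regularity you use must be established on the hyperbolic locus specifically (e.g.\ via multiplier or Blaschke-model parametrizations). Second, your phrase ``the hyperbolic component of $\mathcal{R}(a_0,b_0)$ lying over $W$'' presupposes existence and connectedness of such a component, which again is part of what must be proved.
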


\begin{remark}
1) Indeed, the proposition holds for any straightening map $\chi_{a_0,b_0}$ for anti-holomorphic renormalizations in the family $\{g_{a,b}\}$.

2) In the proof below, we parametrize cubic polynomials to be monic so that we can apply the results of \cite{IK} on general straightening maps.
\end{remark}

\begin{proof}
We complexify the family and consider the straightening map defined there.

Let $\Poly(3)=\{P_{A,b}(z)=z^3-3Az+b;\ (A,b) \in \C^2\}$ denote the complex cubic family. Observe that $h_{a,b}=P_{a^2,bi}$. Let $\cC(2\times 2)$ be the connectedness locus of the biquadratic family $\{(z^2+a)^2+b:a,b\in\mathbb{C}\}$, and $\widetilde{\chi}_{A_0,b_0i}:\widetilde{\cC}(A_0,b_0i) \to \cC(2\times 2)$ be the straightening map for $(A_0,b_0i)$-renormalization in $\Poly(3)$, where $A_0=a_0^2$.

Applying \cite[Theorem~C]{IK} to the current setting, we conclude that for any hyperbolic parameter $c \in \cM_2^*$, there exists some $(A,b) \in \C^2$ such that $\widetilde{\chi}_{A_0,b_0i}(A,b)$ is defined and equal to the biquadratic polynomial $(z^2+\overline{c})^2+c$, which is the second iterate of $\overline{z}^2+c$. Hence if $A$ is positive real and $b$ is purely imaginary, then it follows that $\chi_{a_0,b_0}(a,\frac{b}{i})=c$, where $a=\sqrt{A}$.
 
In fact, let us consider $P(z)=P_{A,b}(z)=z^3-3Az+b$. Since $h_{a_0,b_0}=P_{a_0^2,b_0i}$ is symmetric with respect to the imaginary axis, we have that $\varphi(P_{A,b}(\varphi(z)))=P_{\bar{A},-\bar{b}}$ is also $(A_0,b_0i)$-renormalizable, where $\varphi(z)=-\bar{z}$ is the reflection with respect to the imaginary axis. Moreover, since $\varphi$ exchanges the critical points $\pm a_0$ for $P_{A_0,b_0i}$, we have 
 \[
  \widetilde{\chi}_{A_0,b_0i}(\bar{A},-\bar{b})
 = (z^2+\overline{c})^2+c = \widetilde{\chi}_{A_0,b_0i}(A,b).
 \]
Therefore, by \cite[Theorem~B]{IK}, $(\bar{A},-\bar{b})=(A,b)$, equivalently, $A \in \R$  and $b \in i\R$.
\end{proof}

With these preliminary results at our disposal, we can now set up the foundation for the key technical theorem (of this section) to the effect that all `umbilical cords' away from the line $\{b=0\}$ `wiggle'. 

Let $H_1$, $H_2$, $H_3$ be the hyperbolic components of period $3$ of $\mathcal{M}_2^*$ (by \cite[Theorem~1.3]{MNS}, there are exactly $3$ of them). By Proposition~\ref{prop:real cubic onto hyp}, we have that 
$$
\chi_{a_0,b_0}(\mathcal{R}(a_0,b_0))\supset H_1\cup H_2\cup H_3.
$$ 
We claim that at most one of the hyperbolic components $\chi_{a_0,b_0}^{-1} \left( H_i\right)\subset\mathcal{R}(a_0,b_0)$ ($i\in\{1,2,3\}$) can intersect the line $\{b=0\}$. To see this, first observe that the anti-holomorphic involution $z\mapsto -z^*$ conjugates $g_{a,b}$ to $g_{a,-b}$. Hence, if a hyperbolic component of $\mathcal{G}$ intersects the line $\{b=0\}$, then it must be symmetric with respect to this line, and the center of this hyperbolic component must lie on $\{b=0\}$. Let us first suppose that $b_0=0$; i.e.,\ the hyperbolic component $H$ (which has $(a_0,b_0)$ as its center) is real-symmetric. In this case, reflection with respect to the line $\{b=0\}$ preserves $\cup_{i=1}^3\chi_{a_0,b_0}^{-1} \left( H_i\right)$. It follows that reflection with respect to the line $\{b=0\}$ must fix precisely one and interchange the other two hyperbolic components among $\chi_{a_0,b_0}^{-1} \left( H_i\right), i\in\{1,2,3\}$. Hence, exactly one of the hyperbolic components $\chi_{a_0,b_0}^{-1} \left( H_i\right), i\in\{1,2,3\},$ intersects the line $\{b=0\}$. Now suppose that $b_0\neq0$ (so $H$ lies off the line $\{b=0\}$), and the hyperbolic component $\chi_{a_0,b_0}^{-1} \left( H_i\right)$ (for some $i\in\{1,2,3\}$) intersects $\{b=0\}$. Since $b_0\neq 0$, the reflected hyperbolic component $\iota(H)$, which has its center at $(a_0,-b_0)$, is disjoint from $H$. Thus, the assumption that the hyperbolic component $\chi_{a_0,b_0}^{-1} \left( H_i\right)$ (for some $i\in\{1,2,3\}$) intersects $\{b=0\}$ implies that $\chi_{a_0,b_0}^{-1} \left( H_i\right)\subset\mathcal{C}(a_0,b_0)$ would intersect and hence coincide with the hyperbolic component $\chi_{a_0,-b_0}^{-1} \left( H_j\right)\subset\mathcal{C}(a_0,-b_0)$ (for some $j\in\{1,2,3\}$). But this is impossible as the centers of these components belong to two different combinatorial renormalization loci (namely, $\mathcal{C}(a_0,b_0)$ and $\mathcal{C}(a_0,-b_0)$), and thus must have different rational laminations. We conclude that in either case, at most one of the hyperbolic components $\chi_{a_0,b_0}^{-1} \left( H_i\right)$ ($i\in\{1,2,3\}$) can intersect the line $\{b=0\}$. 

Therefore, we can and will pick $i$ such that $\chi_{a_0,b_0}^{-1} \left( H_i\right)$ does not intersect $\{b=0\}$, and set $$H':=\chi_{a_0,b_0}^{-1} \left( H_i\right).$$

By construction of $H'$, each map $g_{a,b}$ in $H'$ has an attracting cycle of period $6n$ such that every point in this cycle is fixed by the anti-holomorphic map $\iota\circ g_{a,b}^{\circ 3n}$. Hence, it follows from the arguments of \cite[Lemma~2.8]{MNS} that each $g_{a,b}$ on $\partial H'$ has a parabolic cycle of period $6n$ such that every point in this cycle is fixed by $\iota\circ g_{a,b}^{\circ 3n}$. Moreover, since $g_{a,b}$ has two critical points, the same result also implies that there can be at most two attracting petals at each such parabolic point. We will call $(a,b)\in\partial H'$ a \emph{simple parabolic parameter} (respectively, a \emph{parabolic cusp}) if the number of attracting petals at each parabolic point is $1$ (respectively, $2$). 

Now, for a simple parabolic parameter $(a,b)\in\partial H'$, each attracting petal is fixed under $\iota\circ g_{a,b}^{\circ 3n}$, and hence \cite[Lemma~3.1]{MNS} provides us with an attracting Fatou coordinate (unique up to a real additive constant) that conjugate the map $\iota\circ g_{a,b}^{\circ 3n}$ (on the petal) to the map $\zeta\mapsto\overline{\zeta}+1/2$ (on a right half-plane). We will call the imaginary part of $(\iota\circ g_{a,b}^{\circ 3n})(ai)$ in the above Fatou coordinate the \emph{critical {\'E}calle height} of $g_{a,b}$. The pre-image of the real line under this attracting Fatou coordinate will be referred to as the \emph{attracting equator}. Clearly, the same construction can be carried out in the repelling petal as well giving rise to a repelling Fatou coordinate that conjugate $\iota\circ g_{a,b}^{\circ 3n}$ (on the repelling petal) to the map $\zeta\mapsto\overline{\zeta}+1/2$ (on a left half-plane). The pre-image of the real line under this repelling Fatou coordinate will be called the \emph{repelling equator}. 

The proof of \cite[Theorem~1.2]{MNS} (which proves a structure theorem for the boundaries of odd period hyperbolic components of the multicorns) can now be adapted to the current setting to show that $\partial H'$ consists of $3$ parabolic cusps and $3$ parabolic arcs which are parametrized by the critical {\'E}calle height (also see \cite[Theorem~3.2]{MNS}). 
 
For our purposes, the most important parameter on a parabolic arc is the parameter with critical {\'E}calle height $0$. Let $(\widetilde{a},\widetilde{b})$ be the critical {\'E}calle height $0$ parameter on the root arc (such that the unique parabolic cycle disconnects the Julia set) of $\partial H'$. We denote the unique Fatou component of $g_{\widetilde{a},\widetilde{b}}$ containing the critical point $\widetilde{a}i$ by $U$, and the unique parabolic periodic point of $g_{\widetilde{a},\widetilde{b}}$ on $\partial U$ by $\widetilde{z}$. Since $g_{\widetilde{a},\widetilde{b}}$ commutes with $\iota$, $\iota(U)$ is the unique Fatou component of $g_{\widetilde{a},\widetilde{b}}$ containing the critical point $-\widetilde{a}i$, and $\widetilde{z}^*$ is the unique parabolic periodic point of $g_{\widetilde{a},\widetilde{b}}$ on $\partial \iota(U)$. 
 
We define a \emph{loose parabolic tree} of $g_{\widetilde{a},\widetilde{b}}$ as a minimal tree within the (path connected) filled Julia set that connects the parabolic orbit (of period $6n$) and the critical orbits such that it intersects the closure of any bounded Fatou component at no more than two points. Since the filled Julia set of a polynomial is full, any loose parabolic tree is uniquely defined up to homotopies within bounded Fatou components. It is easy to see that any loose parabolic tree intersects the Julia set in a Cantor set, and these points of intersection are the same for any loose tree (note that the parabolic cycle of $g_{\widetilde{a},\widetilde{b}}$ is simple, and hence any two periodic Fatou components have disjoint closures). By construction, the forward image of a loose parabolic tree is contained in a loose parabolic tree (see \cite[\S 2.3]{IM4} for more details). 

We will now show that if the `umbilical cord' of $H'$ lands, then the two restricted maps $g_{\widetilde{a},\widetilde{b}}^{\circ 6n}\vert_{B(\widetilde{z},\epsilon)}$ and $g_{\widetilde{a},\widetilde{b}}^{\circ 6n}\vert_{B(\widetilde{z}^*,\epsilon)}$ (where $B(w,\epsilon)$ denotes the round disk of radius $\epsilon$ centered at $w$) are conformally conjugate.

\begin{lemma}\label{umbilical_lands_germs_conjugate}
If there exists a path $\gamma:\left[0,\delta\right]\to \mathbb{R}^2$ such that $\gamma(0)=(\widetilde{a},\widetilde{b})$, and $\gamma(\left(0,\delta\right]) \subset \mathcal{C}(\mathcal{G})\setminus \overline{H'}$, then the two restricted maps $g_{\widetilde{a},\widetilde{b}}^{\circ 6n}\vert_{B(\widetilde{z},\epsilon)}$ and $g_{\widetilde{a},\widetilde{b}}^{\circ 6n}\vert_{B(\widetilde{z}^*,\epsilon)}$ are conformally conjugate by a local biholomorphism $\eta$ such that $\eta$ maps $g_{\widetilde{a},\widetilde{b}}^{\circ 3rn}(-\widetilde{a}i)$ to $g_{\widetilde{a},\widetilde{b}}^{\circ 3rn}(\widetilde{a}i)$, for $r$ large enough.
\end{lemma}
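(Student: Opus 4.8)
The plan is to exploit the hypothesis that the umbilical cord of $H'$ lands, together with the symmetry of the family $\mathcal{G}$ under $\iota$, to manufacture a conformal conjugacy between the two parabolic germs via parabolic implosion. First I would recall the standing setup: $(\widetilde a,\widetilde b)$ is the critical \'Ecalle height $0$ parameter on the root arc of $\partial H'$, so the parabolic cycle of $g_{\widetilde a,\widetilde b}$ has period $6n$ and is simple, with a single petal at each parabolic point fixed by $\iota\circ g_{\widetilde a,\widetilde b}^{\circ 3n}$; the characteristic parabolic point is $\widetilde z\in\partial U$, and by the reflection symmetry $g_{\widetilde a,\widetilde b}\circ\iota=\iota\circ g_{\widetilde a,\widetilde b}$ the point $\widetilde z^*\in\partial\iota(U)$ is the parabolic point on the boundary of the Fatou component containing $-\widetilde ai$. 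The two germs $g_{\widetilde a,\widetilde b}^{\circ 6n}\vert_{B(\widetilde z,\epsilon)}$ and $g_{\widetilde a,\widetilde b}^{\circ 6n}\vert_{B(\widetilde z^*,\epsilon)}$ are a priori merely conjugate via $\iota$ to \emph{anti-holomorphic} reflections of each other, so the content of the lemma is that the critical \'Ecalle height being $0$ forces a genuine \emph{holomorphic} conjugacy.

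The key steps, in order. (1) Using the path $\gamma$ landing at $(\widetilde a,\widetilde b)$ from within $\mathcal{C}(\mathcal{G})\setminus\overline{H'}$, perform a parabolic perturbation argument exactly as in \cite[\S 3, \S 4]{IM4}: along $\gamma$ the parabolic point $\widetilde z$ splits into a pair of points, and one extracts, in the limit, an identification between the incoming (attracting) and outgoing (repelling) \'Ecalle cylinders at $\widetilde z$. Because $\gamma$ stays outside $\overline{H'}$, the critical orbit of $g_{a,b}$ (for $(a,b)=\gamma(t)$, $t$ small) must escape through the repelling petal, and passing to the limit shows that the critical \'Ecalle height $0$ forces the attracting equator at $\widetilde z$ to be glued to the repelling equator under the transit (horn) map — i.e., the critical value sits on the equator and the gate opened by the perturbation is precisely the one through which the critical orbit passes. (2) Translate this into a statement about the loose parabolic tree: the segment of the loose parabolic tree of $g_{\widetilde a,\widetilde b}$ joining the critical point $\widetilde ai$ (through $\widetilde z$) to the orbit, and its reflection joining $-\widetilde ai$ through $\widetilde z^*$, are exchanged by a local holomorphic map near $\widetilde z$ mapping $\widetilde z\mapsto\widetilde z^*$; one builds $\eta$ on a small disk $B(\widetilde z,\epsilon)$ by composing the attracting Fatou coordinate at $\widetilde z$ with the inverse of the (reflected) attracting Fatou coordinate at $\widetilde z^*$, arranging via the \'Ecalle height $0$ normalization that the two coordinates match (no shift by a half-integer and no sign reversal). (3) Verify that this $\eta$ conjugates $g_{\widetilde a,\widetilde b}^{\circ 6n}$ near $\widetilde z$ to $g_{\widetilde a,\widetilde b}^{\circ 6n}$ near $\widetilde z^*$: on the attracting petal this is immediate since both maps become $\zeta\mapsto\zeta+1$ in the respective Fatou coordinates ($6n$ is even, so two applications of $\zeta\mapsto\overline\zeta+1/2$ give $\zeta\mapsto\zeta+1$), and then extend across $\widetilde z$ by the identity theorem, shrinking $\epsilon$. (4) Finally check the orbit-tracking claim: since $\eta$ is built from the Fatou coordinates and the \'Ecalle-height-$0$ normalization places $(\iota\circ g_{\widetilde a,\widetilde b}^{\circ 3n})(\widetilde ai)$ and $(\iota\circ g_{\widetilde a,\widetilde b}^{\circ 3n})(-\widetilde ai)$ at conjugate points of the two cylinders, for $r$ large the point $g_{\widetilde a,\widetilde b}^{\circ 3rn}(-\widetilde ai)$ lies deep in the attracting petal at $\widetilde z^*$ and its image under $\eta^{-1}$ is $g_{\widetilde a,\widetilde b}^{\circ 3rn}(\widetilde ai)$, by the commuting-with-Fatou-coordinates property.

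The main obstacle I expect is Step (1): making the parabolic implosion argument rigorous in the \emph{one-dimensional real} parameter slice, i.e., showing that the hypothesis $\gamma((0,\delta])\subset\mathcal{C}(\mathcal{G})\setminus\overline{H'}$ genuinely pins down the phase of the transit map so that the critical \'Ecalle height must be $0$ \emph{and} forces the two germs to be holomorphically (not just anti-holomorphically) conjugate. This is where one must carefully control the geometry of how the parabolic cycle of period $6n$ breaks up, ensure the critical orbit of the perturbed map is captured by the correct gate, and rule out the possibility that the conjugacy one builds only respects the reflection symmetry rather than giving a true biholomorphism — the resolution being precisely that \'Ecalle height $0$ is the fixed locus of the reflection on the space of parabolic arc parameters, so the two Fatou coordinates can be chosen to agree exactly. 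The remaining steps are essentially bookkeeping with Fatou coordinates, following \cite[\S 2.3, \S 3--4]{IM4} mutatis mutandis.
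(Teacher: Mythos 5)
There is a genuine gap in your steps (2)--(3), and it sits at the heart of the lemma. You build $\eta$ by composing the attracting Fatou coordinate at $\widetilde z$ with the inverse of the reflected attracting Fatou coordinate at $\widetilde z^*$, and then propose to ``extend across $\widetilde z$ by the identity theorem, shrinking $\epsilon$''. That composition is defined only on an attracting petal, and the identity theorem is not an analytic continuation device: it cannot extend a map defined on a petal to a full neighborhood $B(\widetilde z,\epsilon)$ of the cusp. Worse, the petal-level conjugacy you construct exists for \emph{any} two simple parabolic germs (every attracting Fatou coordinate turns the petal restriction into $\zeta\mapsto\zeta+1$ on a half-plane), so your construction never actually uses the landing hypothesis; yet two parabolic germs need not be conjugate as germs even when their petal restrictions are --- the obstruction is precisely the {\'E}calle--Voronin (horn map) data. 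Producing a conjugacy on a full neighborhood of $\widetilde z$ is exactly where the hypothesis must enter. The paper's route is different: the perturbation argument is used to show, by contradiction, that the repelling equator at $\widetilde z$ is contained in the filled Julia set (if a dynamical ray crossed it, the critical orbits of $g_{\gamma(s)}$ would be swept into the basin of infinity for suitable small $s>0$, contradicting $\gamma((0,\delta])\subset\mathcal{C}(\mathcal{G})$); hence the repelling equator lies in a loose parabolic tree, and pulling back a piece of it from a pre-parabolic point produces a forward-invariant real-analytic arc $\Gamma$ through $\widetilde z$, living in a genuine neighborhood $V$ of $\widetilde z$. Straightening $\Gamma$ by a biholomorphism $\alpha$ conjugates the germ at $\widetilde z$ to a real germ; reflecting the whole construction by $\iota$ yields the \emph{same} real germ at $\widetilde z^*$, and $\eta:=(\iota\circ\alpha\circ\iota)^{-1}\circ\alpha$ is the desired germ conjugacy. (That $\eta$ agrees with your Fatou-coordinate formula on a petal is verified in the paper only in the \emph{next} lemma, after the germ conjugacy already exists.)

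A secondary point: the critical {\'E}calle height $0$ condition is not something to be derived from the perturbation argument, as your step (1) and your closing paragraph suggest; it is built into the choice of $(\widetilde a,\widetilde b)$ as the height-$0$ parameter on the root arc. Its only role here is in your step (4): because the critical orbits lie on the attracting equator, which is part of $\Gamma$ and hence sent to $\mathbb{R}$ by $\alpha$, the conjugacy $\eta$ matches up the two critical orbits as claimed. That final step of yours is fine once a genuine germ conjugacy of this form is in hand.
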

\begin{proof}
The proof follows the strategy of \cite[\S 3]{IM4}, where the corresponding result was proved for quadratic anti-polynomials. Since we deal with a different family in this paper, we include the details for completeness.

Since any two bounded Fatou components of $g_{\widetilde{a},\widetilde{b}}$ have disjoint closures, it follows that any parabolic tree must traverse infinitely many bounded Fatou components, and intersect their boundaries at pre-parabolic points. Furthermore, any loose parabolic tree intersects the Julia set at a Cantor set of points.

We first claim that the repelling equator at $\widetilde{z}$ is contained in a loose parabolic tree of $g_{\widetilde{a},\widetilde{b}}$. To this end, it suffices to show that the repelling equator is contained in the filled Julia set of $g_{\widetilde{a},\widetilde{b}}$. If this were not true, then there would exist dynamical rays (in the dynamical plane of $g_{\widetilde{a},\widetilde{b}}$) crossing the equator and traversing an interval of outgoing {\'E}calle heights $\left[-x, x\right]$ with $x>0$. Since dynamical rays and Fatou coordinates depend continuously on the parameter, this would remain true after perturbation. For $s > 0$, the critical orbits of $g_{\gamma(s)}$ ``transit'' from the incoming {\'E}calle cylinder to the outgoing cylinder (the two critical orbits are related by the conjugacy $\iota$); as $s \downarrow 0$, the image of the critical orbits in the outgoing {\'E}calle cylinder has (outgoing) {\'E}calle height tending to $0$, while the phase tends to $-\infty$ \cite[Lemma~2.5]{IM}. Therefore, there exists $s \in \left(0, \delta\right)$ arbitrarily close to $0$ for which the critical orbit(s), projected into the incoming cylinder, and sent by the transit map to the outgoing cylinder, land(s) on the projection of some dynamical ray that crosses the equator. But in the dynamics of $g_{\gamma(s)}$, this means that the critical orbits lie in the basin of infinity, i.e.,\ such a parameter $\gamma(s)$ lies outside $\mathcal{C}(\mathcal{G})$. This contradicts our assumption that $\gamma((0,\delta]) \subset \mathcal{C}(\mathcal{G}) \setminus \overline{H'}$, and completes the proof of the claim.

We now pick any bounded Fatou component $U'\neq U$ that the repelling equator hits. Then the equator intersects $\partial U'$ at some pre-parabolic point $z'$. Consider a small piece $\Gamma'$ of the equator with $z'$ in its interior. Since $z'$ eventually falls on the parabolic orbit, some large iterate of $g_{\widetilde{a},\widetilde{b}}$ maps $z'$ to $\widetilde{z}$ by a local biholomorphism carrying $\Gamma'$ to an analytic arc $\Gamma$ (say, $\Gamma = g_{\widetilde{a},\widetilde{b}}^{\circ l} (\Gamma')$ for some $l$ large) passing through $\widetilde{z}$. We will show that $\Gamma$ agrees with the repelling equator (up to truncation). Indeed, the repelling equator, and the curve $\Gamma$ are both parts of two loose parabolic trees (recall that any forward iterate of a loose parabolic tree is contained in a loose parabolic tree), and hence must coincide along a Cantor set of points on the Julia set. As analytic arcs, they must thus coincide up to truncation. In particular, the part of $\Gamma$ not contained in $U$ is contained in the repelling equator, and is forward invariant. We can straighten the analytic arc $\Gamma$ to an interval $(-\delta', \delta') \subset \mathbb{R}$ by a local biholomorphism $\alpha : V \rightarrow \mathbb{C}$ such that $\widetilde{z} \in V$, and $\alpha(\widetilde{z})=0$ (for convenience, we choose $V$ such that it is symmetric with respect to $\Gamma$). This local biholomorphism conjugates the parabolic germ of $g_{\widetilde{a},\widetilde{b}}^{\circ 6n}$ at $\widetilde{z}$ to a germ that fixes $0$. Moreover, the conjugated germ maps small enough positive reals to positive reals. Clearly, this must be a real germ; i.e.,\ its Taylor series has real coefficients. Thus, the parabolic germ of $g_{\widetilde{a},\widetilde{b}}^{\circ 6n}$ at $\widetilde{z}$ is conformally conjugate to a real germ.

Since $\iota$ commutes with $g_{\widetilde{a},\widetilde{b}}$, one can carry out the preceding construction at the parabolic point $\widetilde{z}^*$, and show that the parabolic germ of $g_{\widetilde{a},\widetilde{b}}^{\circ 6n}$ at $\widetilde{z}^*$ is also conformally conjugate to a real germ. In fact, the role of $\Gamma'$ is now played by $\iota(\Gamma')$, and hence, the role of $\Gamma$ is played by $g_{\widetilde{a},\widetilde{b}}^{\circ l} (\iota(\Gamma')) = \iota(\Gamma)$. Then the biholomorphism $\iota \circ \alpha \circ \iota : \iota(V) \rightarrow \mathbb{C}$ straightens $\iota(\Gamma)$. Conjugating the parabolic germ of $g_{\widetilde{a},\widetilde{b}}^{\circ 6n}$ at $\widetilde{z}^*$ by $\iota \circ \alpha \circ \iota$, one recovers the same real germ as in the previous paragraph. Thus, the parabolic germs given by the restrictions of $g_{\widetilde{a},\widetilde{b}}^{\circ 6n}$ in neighborhoods of $\widetilde{z}$ and $\widetilde{z}^*$ are conformally conjugate. 

Note that as $g_{\widetilde{a},\widetilde{b}}$ has critical {\'E}calle height $0$, its critical orbits (in $U$) lie on the attracting equator. Moreover, since the attracting equator at $\widetilde{z}$ is mapped to the real line by $\alpha$, the conjugacy $$\eta:=\left(\iota \circ \alpha \circ \iota\right)^{-1}\circ\alpha$$ preserves the critical orbits; i.e.,\ it maps $g_{\widetilde{a},\widetilde{b}}^{\circ 3rn}(-\widetilde{a}i)$ to $g_{\widetilde{a},\widetilde{b}}^{\circ 3rn}(\widetilde{a}i)$ (for $r$ large enough, so that $g_{\widetilde{a},\widetilde{b}}^{\circ 3rn}(-\widetilde{a}i)=(\iota\circ g_{\widetilde{a},\widetilde{b}}^{\circ 3rn})(\widetilde{a}i)$ is contained in the domain of definition of $\alpha$). The proof is now complete.
\end{proof}

\begin{remark}
It follows from the proof of the previous lemma that the real-analytic curve $\Gamma$ passing through $\widetilde{z}$ is invariant under $\iota\circ g_{\widetilde{a},\widetilde{b}}^{\circ 3n}$. Indeed, $\Gamma$ is formed by the parabolic point $\widetilde{z}$, and parts of the attracting and the repelling equator at $\widetilde{z}$. 
\end{remark}

The next lemma improves the conclusion of Lemma~\ref{umbilical_lands_germs_conjugate}, and shows that landing of the umbilical cord of $H'$ implies the existence of a conformal conjugacy between the polynomial-like restrictions of $g_{\widetilde{a},\widetilde{b}}^{\circ 6n}$ in some neighborhoods of $\overline{U}$ and $\overline{\iota(U)}$ (respectively).

\begin{lemma}\label{umbilical_cubic}
If there exists a path $\gamma:\left[0,\delta\right]\to \mathbb{R}^2$ such that $\gamma(0)=(\widetilde{a},\widetilde{b})$, and $\gamma(\left(0,\delta\right]) \subset \mathcal{C}(\mathcal{G})\setminus \overline{H'}$, then the two polynomial-like restrictions of $g_{\widetilde{a},\widetilde{b}}^{\circ 6n}$ in some neighborhoods of $\overline{U}$ and $\overline{\iota(U)}$ are conformally conjugate.
\end{lemma}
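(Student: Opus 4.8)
The plan is to upgrade the local conjugacy $\eta$ produced by Lemma~\ref{umbilical_lands_germs_conjugate} — which is only defined on a round disk $B(\widetilde z,\epsilon)$ — to a conjugacy between polynomial-like restrictions of $g_{\widetilde a,\widetilde b}^{\circ 6n}$ on full neighborhoods of $\overline U$ and $\overline{\iota(U)}$. The mechanism is analytic continuation of the local conjugacy along the dynamics: pull $\eta$ back by the inverse branches of the attracting dynamics on $\overline U$. Concretely, recall from Lemma~\ref{umbilical_lands_germs_conjugate} and the following remark that $\eta$ maps the attracting equator at $\widetilde z$ to the attracting equator at $\widetilde z^*$, intertwines the attracting Fatou coordinates, and carries the critical orbit in $U$ (which lies on the equator, as the critical {\'E}calle height is $0$) to the critical orbit in $\iota(U)$.

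The key steps, in order, would be: (1) Use the attracting Fatou coordinate on $U$ to transport $\eta$ from the small disk around $\widetilde z$ to an open piece of the incoming {\'E}calle cylinder; since $\eta$ respects Fatou coordinates and critical {\'E}calle heights, this extends $\eta$ to a biholomorphism on a full sub-cylinder conjugating the first-return dynamics, hence (projecting back) on the grand orbit of a fundamental annulus inside $U$. (2) By the usual $\lambda$-lemma / monodromy argument along the orbit of the critical point, show the continuation is single-valued: the only potential obstruction to single-valuedness is a loop around the parabolic point $\widetilde z$ (the puncture of the cylinder), and one checks that going once around the cylinder returns the same branch because $\eta$ commutes with the return map and the Fatou coordinate normalization fixes the translation constant using the critical orbit position (this is where the critical {\'E}calle height $0$ and the equator-preservation from Lemma~\ref{umbilical_lands_germs_conjugate} are essential). (3) Conclude that $\eta$ extends holomorphically and injectively to a neighborhood of $\overline U$ in such a way that $\eta\circ g_{\widetilde a,\widetilde b}^{\circ 6n} = g_{\widetilde a,\widetilde b}^{\circ 6n}\circ\eta$ wherever both sides are defined, and that $\eta(\overline U) = \overline{\iota(U)}$ with $\eta(\partial U)=\partial\iota(U)$; the continuation reaches all of $\overline U$ because every point of $\overline U$ is (a limit of points) in the backward orbit under $g_{\widetilde a,\widetilde b}^{\circ 6n}|_U$ of a point in the fundamental annulus, and $\eta$ respects the branched-covering structure on $U$ (matching the unique critical point). (4) Finally, thicken: since $\eta$ now conjugates $g_{\widetilde a,\widetilde b}^{\circ 6n}$ on a neighborhood of $\overline U$ to itself on a neighborhood of $\overline{\iota(U)}$ and these maps are polynomial-like there (their filled Julia sets are $\overline U$ and $\overline{\iota(U)}$ together with their pre-parabolic decorations — in fact just the closures of these Fatou components, as the parabolic cycle is simple), one obtains the desired conformal conjugacy of polynomial-like restrictions, shrinking the domains slightly if needed so that both are genuine polynomial-like pairs of degree $2$.

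The main obstacle I expect is step (2): proving that the analytic continuation of $\eta$ out of the germ, along the attracting basin and back around the parabolic point, is well-defined and single-valued. One has to rule out monodromy arising from the puncture at $\widetilde z$ in the incoming cylinder, and this is exactly where one must use that $\eta$ was built in Lemma~\ref{umbilical_lands_germs_conjugate} as $(\iota\circ\alpha\circ\iota)^{-1}\circ\alpha$ from a \emph{common} straightening of the invariant real-analytic arc $\Gamma$ (so $\eta$ genuinely intertwines the Fatou coordinates, not merely the germs up to a constant), together with the fact that the critical orbit pins down the additive normalization of the Fatou coordinate. A secondary (more routine) point is verifying that the continued map is injective on a neighborhood of $\overline U$ and matches the critical points, so that it is a legitimate conjugacy of degree-$2$ polynomial-like maps rather than merely a semiconjugacy; this follows from injectivity of $\eta$ on the germ plus the fact that $g_{\widetilde a,\widetilde b}^{\circ 6n}|_U$ and $g_{\widetilde a,\widetilde b}^{\circ 6n}|_{\iota(U)}$ are each proper of degree $2$ with a single critical point, which $\eta$ must match.
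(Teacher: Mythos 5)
Your first half is essentially the paper's argument. You pin down $\eta$ as the map intertwining the normalized attracting Fatou coordinates at $\widetilde{z}$ and $\widetilde{z}^*$ (the additive constant is killed by the critical {\'E}calle height $0$ normalization together with the fact that $\alpha$ sends the critical orbit to the real line), and you then spread $\eta$ over all of $U$ by lifting along the dynamics. The monodromy issue you isolate in step (2) is resolved exactly as you suggest; in the paper it is phrased as lifting $\left(\iota\circ\psi^{\mathrm{att}}_{\widetilde{a},\widetilde{b}}\circ\iota\right)^{-1}\circ\psi^{\mathrm{att}}_{\widetilde{a},\widetilde{b}}$ by the iterates of $\iota\circ g_{\widetilde{a},\widetilde{b}}^{\circ 3n}$ on $U$ and on $\iota(U)$, which is possible precisely because this map sends the unique critical point $\widetilde{a}i$ of the return map in $U$ to the unique critical point $-\widetilde{a}i$ in $\iota(U)$.

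The gap is in your steps (3)--(4). After the above you only have a conformal conjugacy on the open set $U$ together with the original germ conjugacy on a neighborhood $V$ of $\widetilde{z}$; the entire content of the lemma beyond Lemma~\ref{umbilical_lands_germs_conjugate} is the holomorphic extension of $\eta$ \emph{across the Julia set} $\partial U$ to an open neighborhood of $\overline{U}$, and ``thickening'' is not a mechanism that produces this. A conjugacy defined on the interior of the filled Julia set of a polynomial-like map does not automatically extend to a conjugacy of the polynomial-like restrictions, and your justification (density of backward orbits of a fundamental annulus \emph{inside} $U$) never leaves $U$. What is needed, and what the paper supplies, is: (i) a homeomorphic extension of $\eta$ to $\partial U$ via Carath{\'e}odory's theorem (using local connectivity of the basin boundary); (ii) the observation that $\eta$ is already analytic on the full neighborhood $V$ of $\widetilde{z}$, which sticks out of $\overline{U}$; (iii) Montel's theorem, giving $\bigcup_{k} g_{\widetilde{a},\widetilde{b}}^{\circ 6kn}\left(V\cap\partial U\right)=\partial U$; (iv) propagation of the analytic extension to a neighborhood of every boundary point via the functional equation $\eta\circ g_{\widetilde{a},\widetilde{b}}^{\circ 6n}=g_{\widetilde{a},\widetilde{b}}^{\circ 6n}\circ\eta$, using that these iterates have no critical points on $\partial U$ and that all local extensions continue one common analytic map; and (v) a properness/degree count showing the extension is injective. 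Without (ii)--(iv) your step (4) has nothing to thicken. A minor further slip: the polynomial-like restriction of $g_{\widetilde{a},\widetilde{b}}^{\circ 6n}$ near $\overline{U}$ has degree $4$, not $2$ (it carries three critical points $u$, $v$, $\widetilde{a}i$ in $U$); only the anti-polynomial-like return map $\iota\circ g_{\widetilde{a},\widetilde{b}}^{\circ 3n}$ has degree $2$.
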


\begin{proof}
We will first show that the conformal conjugacy $\eta= \iota \circ \alpha^{-1} \circ \iota \circ \alpha : V \rightarrow \iota(V)$ (from Lemma~\ref{umbilical_lands_germs_conjugate}) between the restrictions of $g_{\widetilde{a},\widetilde{b}}^{\circ 6n}$ around $\widetilde{z}$ and $\widetilde{z}^*$ can be extended to all of $U$. To this end, let us choose the attracting Fatou coordinate $\psi^{\mathrm{att}}_{\widetilde{a},\widetilde{b}}$ at $\widetilde{z}$ normalized so that it maps the equator to the real line, and $\psi^{\mathrm{att}}_{\widetilde{a},\widetilde{b}}(\widetilde{a}i) = 0$. Then, $\psi^{\mathrm{att}}_{\widetilde{a},\widetilde{b}}$ conjugates the anti-holomorphic return map $\iota\circ g_{\widetilde{a},\widetilde{b}}^{\circ 3n}$ (on an attracting petal at $\widetilde{z}$) to $\zeta\mapsto\overline{\zeta}+1/2$ (on a right half-plane). This naturally determines a preferred attracting Fatou coordinate $(\iota \circ \psi^{\mathrm{att}}_{\widetilde{a},\widetilde{b}} \circ \iota)$ at $\widetilde{z}^*$ such that it conjugates the anti-holomorphic return map $\iota\circ g_{\widetilde{a},\widetilde{b}}^{\circ 3n}$ (on an attracting petal at $\widetilde{z}^*$) to $\zeta\mapsto\overline{\zeta}+1/2$, and $(\iota\circ\psi^{\mathrm{att}}_{\widetilde{a},\widetilde{b}}\circ\iota)(-\widetilde{a}i) = 0$.

Since $\eta$ is a conjugacy between parabolic germs, it maps some attracting petal (not necessarily containing $\widetilde{a}i$) $P\subset V$ at $\widetilde{z}$ to some attracting petal $\iota(P) \subset \iota(V)$ at $\widetilde{z}^*$. Hence, $\psi^{\mathrm{att}}_{\widetilde{a},\widetilde{b}} \circ \eta^{-1}$ is an attracting Fatou coordinate for $g_{\widetilde{a},\widetilde{b}}^{\circ 6n}$ at $\widetilde{z}^*$. By the uniqueness of Fatou coordinates, we have that 
$$(\psi^{\mathrm{att}}_{\widetilde{a},\widetilde{b}} \circ \eta^{-1})(z) = (\iota \circ \psi^{\mathrm{att}}_{\widetilde{a},\widetilde{b}} \circ \iota)(z) + \lambda,$$
for some $\lambda \in \mathbb{C}$, and for all $z$ in their common domain of definition. There is some large $l\in\N$ for which $g_{\widetilde{a},\widetilde{b}}^{\circ 6ln}(-\widetilde{a}i)$ belongs to $\iota(V)$, the domain of definition of $\eta^{-1}$. By definition, 
$$\psi^{\mathrm{att}}_{\widetilde{a},\widetilde{b}} \circ \eta^{-1}(g_{\widetilde{a},\widetilde{b}}^{\circ 6ln}(-\widetilde{a}i))= \psi^{\mathrm{att}}_{\widetilde{a},\widetilde{b}} \circ \alpha ^{-1} \circ \iota \circ \alpha \circ \iota \circ g_{\widetilde{a},\widetilde{b}}^{\circ 6ln}(-\widetilde{a}i)$$
$$\hspace{8mm} =\psi^{\mathrm{att}}_{\widetilde{a},\widetilde{b}} \circ \alpha ^{-1} \circ \iota \circ \alpha \circ g_{\widetilde{a},\widetilde{b}}^{\circ 6ln}\circ \iota (-\widetilde{a}i)= \psi^{\mathrm{att}}_{\widetilde{a},\widetilde{b}} \left(\alpha^{-1}\left( \iota\left(\alpha\left(g_{\widetilde{a},\widetilde{b}}^{\circ 6ln}(\widetilde{a}i)\right)\right)\right)\right)$$
$$\hspace{-1.5cm} =\psi^{\mathrm{att}}_{\widetilde{a},\widetilde{b}} \left(\alpha^{-1}\left(\alpha\left(g_{\widetilde{a},\widetilde{b}}^{\circ 6ln}(\widetilde{a}i)\right)\right)\right)
=\psi^{\mathrm{att}}_{\widetilde{a},\widetilde{b}} \left(g_{\widetilde{a},\widetilde{b}}^{\circ 6ln} (\widetilde{a}i)\right)=l.$$
(In the above chain of equalities, we have used the facts that $\widetilde{a}i$ lies on the attracting equator at $\widetilde{z}$, and $\alpha$ maps $g_{\widetilde{a},\widetilde{b}}^{\circ 6ln} (\widetilde{a}i)$ to the real line). 

But, $$(\iota\circ\psi^{\mathrm{att}}_{\widetilde{a},\widetilde{b}}\circ\iota)(g_{\widetilde{a},\widetilde{b}}^{\circ 6ln}(-\widetilde{a}i))= (\iota \circ \psi^{\mathrm{att}}_{\widetilde{a},\widetilde{b}}) (g_{\widetilde{a},\widetilde{b}}^{\circ 6ln}(\widetilde{a}i))= l.$$
This shows that $\lambda=0$, and hence, $\eta = \left(\iota\circ\psi^{\mathrm{att}}_{\widetilde{a},\widetilde{b}}\circ\iota\right)^{-1} \circ \psi^{\mathrm{att}}_{\widetilde{a},\widetilde{b}}$ on $P$.

Note that the conjugacy $(\iota\circ\psi^{\mathrm{att}}_{\widetilde{a},\widetilde{b}}\circ\iota)^{-1}\circ\psi^{\mathrm{att}}_{\widetilde{a},\widetilde{b}}$ (between restrictions of $(\iota\circ g_{\widetilde{a},\widetilde{b}}^{\circ 3n})$ on some attracting petals at $\widetilde{z}$ and $\widetilde{z}^*$, respectively) maps the unique critical point $\widetilde{a}i$ of $(\iota\circ g_{\widetilde{a},\widetilde{b}}^{\circ 3n})$ in $U$ to the unique critical point $-\widetilde{a}i$ of $(\iota\circ g_{\widetilde{a},\widetilde{b}}^{\circ 3n})$ in $\iota(U)$. Hence, we can lift $(\iota\circ\psi^{\mathrm{att}}_{\widetilde{a},\widetilde{b}}\circ\iota)^{-1}\circ\psi^{\mathrm{att}}_{\widetilde{a},\widetilde{b}}$ by the iterates of $(\iota\circ g_{\widetilde{a},\widetilde{b}}^{\circ 3n})\vert_U$ and $(\iota\circ g_{\widetilde{a},\widetilde{b}}^{\circ 3n})\vert_{\iota(U)}$ to produce a conformal conjugacy between $(\iota\circ g_{\widetilde{a},\widetilde{b}}^{\circ 3n})\vert_U$ and $(\iota\circ g_{\widetilde{a},\widetilde{b}}^{\circ 3n})\vert_{\iota(U)}$. Therefore, $\eta$ extends to $U$ as a conformal conjugacy between $(\iota\circ g_{\widetilde{a},\widetilde{b}}^{\circ 3n})\vert_U$ and $(\iota\circ g_{\widetilde{a},\widetilde{b}}^{\circ 3n})\vert_{\iota(U)}$, and hence between $g_{\widetilde{a},\widetilde{b}}^{\circ 6n}\vert_U$ and $g_{\widetilde{a},\widetilde{b}}^{\circ 6n}\vert_{\iota(U)}$. 

Abusing notations, let us denote the extended conjugacy from $U \cup V$ onto $\iota\left(U \cup V\right)$ by $\eta$. Our next goal is to extend $\eta$ to a neighborhood of $\overline{U}$ (the topological closure of $U$). To this end, first observe that the basin boundaries are locally connected, and hence by Carath\'eodory's theorem, the conformal conjugacy $\eta$ extends as a homeomorphism from $\partial U$ onto $\partial \iota(U)$. Moreover, $\eta$ extends analytically across the point $\widetilde{z}$. By Montel's theorem, we have that $$\bigcup_{k} g_{\widetilde{a},\widetilde{b}}^{\circ 6kn}\left( V \cap \partial U\right) = \partial U.$$ As none of the $g_{\widetilde{a},\widetilde{b}}^{\circ 6kn}$ has a critical point on $\partial U$, we can extend $\eta$ in a neighborhood of each point of $\partial U$ by simply using the functional equation $\eta \circ g_{\widetilde{a},\widetilde{b}}^{\circ 6n} = g_{\widetilde{a},\widetilde{b}}^{\circ 6n} \circ \eta$. Since all of these extensions at various points of $\partial U$ extend the already defined (and conformal) common map $\eta$, the uniqueness of analytic continuations yields an analytic extension of $\eta$ in a neighborhood of $\overline{U}$. By construction, this extension is a proper holomorphic map, and assumes every point in $\iota(U)$ precisely once. Therefore, the extended map $\eta$ from a neighborhood of $\overline{U}$ onto a neighborhood of $\overline{\iota(U)}$ has degree $1$, and hence is our desired conformal conjugacy between polynomial-like restrictions of $g_{\widetilde{a},\widetilde{b}}^{\circ 6n}$ on some neighborhoods of $\overline{U}$ and $\overline{\iota(U)}$ (respectively).
\end{proof}

\begin{remark}
We would like to emphasize, albeit at the risk of being pedantic, that the germ conjugacy $\eta$ extends to the closure of the basins only because it respects the dynamics on the critical orbits. The map $g_{\widetilde{a},\widetilde{b}}^{\circ 6n}$ has three critical points $u$, $v$, $\widetilde{a}i$ in the Fatou component $U$, such that $g_{\widetilde{a},\widetilde{b}}^{\circ 3n}(u)=g_{\widetilde{a},\widetilde{b}}^{\circ 3n}(v)=-\widetilde{a}i$. Thus, $g_{\widetilde{a},\widetilde{b}}^{\circ 6n}$ has two infinite critical orbits in $U$; namely, $$\widetilde{a}i\xmapsto{g_{\widetilde{a},\widetilde{b}}^{\circ 6n}} g_{\widetilde{a},\widetilde{b}}^{\circ 6n}(\widetilde{a}i)\xmapsto{g_{\widetilde{a},\widetilde{b}}^{\circ 6n}}\cdots,\qquad \textrm{and}\ \qquad u,v\xmapsto{g_{\widetilde{a},\widetilde{b}}^{\circ 6n}} g_{\widetilde{a},\widetilde{b}}^{\circ 3n}(-\widetilde{a}i)\xmapsto{g_{\widetilde{a},\widetilde{b}}^{\circ 6n}}\cdots.$$ Clearly, these two critical orbits are dynamically different. By real symmetry, $g_{\widetilde{a},\widetilde{b}}^{\circ 6n}$ has three critical points $u^*, v^*, -\widetilde{a}i$ in $\iota(U)$. The corresponding critical orbits in $\iota(U)$ are given by $$-\widetilde{a}i\xmapsto{g_{\widetilde{a},\widetilde{b}}^{\circ 6n}} g_{\widetilde{a},\widetilde{b}}^{\circ 6n}(-\widetilde{a}i)\xmapsto{g_{\widetilde{a},\widetilde{b}}^{\circ 6n}}\cdots,\ \qquad  \textrm{and} \qquad u^*,v^*\xmapsto{g_{\widetilde{a},\widetilde{b}}^{\circ 6n}} g_{\widetilde{a},\widetilde{b}}^{\circ 3n}(\widetilde{a}i)\xmapsto{g_{\widetilde{a},\widetilde{b}}^{\circ 6n}}\cdots.$$ By our construction, $\eta$ maps (the tail of) each of the two (dynamically distinct) critical orbits of $g_{\widetilde{a},\widetilde{b}}^{\circ 6n}\vert_U$ to the (tail of the) corresponding critical orbit of $g_{\widetilde{a},\widetilde{b}}^{\circ 6n}\vert_{\iota(U)}$. 

It is good to keep in mind that the parabolic germs of $g_{\widetilde{a},\widetilde{b}}^{\circ 6n}$ at $\widetilde{z}$ and $\widetilde{z}^*$ are \emph{always} conformally conjugate by $g_{\widetilde{a},\widetilde{b}}^{\circ 3n}$; but this local conjugacy exchanges the two dynamically marked critical orbits, which have different topological dynamics, and hence this local conjugacy has no chance of being extended to the entire parabolic basin.
\end{remark}

\begin{theorem}[Umbilical Cord Wiggling in Real Cubics]\label{real_germ}
There does not exist a path $\gamma:\left[0,\delta\right]\to \mathbb{R}^2$ such that $\gamma(0)=(\widetilde{a},\widetilde{b})$, and $\gamma(\left(0,\delta\right]) \subset \mathcal{C}(\mathcal{G})\setminus \overline{H'}$.
\end{theorem}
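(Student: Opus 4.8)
\textbf{Proof proposal for Theorem~\ref{real_germ}.} The plan is to argue by contradiction: assume such a path $\gamma$ exists. By Lemma~\ref{umbilical_cubic}, this forces the polynomial-like restrictions of $g_{\widetilde{a},\widetilde{b}}^{\circ 6n}$ on some neighborhoods of $\overline{U}$ and $\overline{\iota(U)}$ to be conformally conjugate, via the map $\eta$ that respects the two dynamically distinct critical orbits in $U$. The strategy is then to promote this semi-local conjugacy between polynomial-like maps to a global affine conjugacy between two polynomials, and finally to derive a contradiction from the rigidity of the real cubic family $\mathcal{G}$ together with the behavior of $\eta$ on critical orbits.

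First I would straighten both polynomial-like maps. Applying the Straightening Theorem (in the holomorphic-polynomial version) to the two polynomial-like restrictions of $g_{\widetilde{a},\widetilde{b}}^{\circ 6n}$ around $\overline{U}$ and $\overline{\iota(U)}$, and using that $\eta$ is already a conformal (in particular quasiconformal) conjugacy between them, I would conclude that the two straightened polynomials are hybrid equivalent; since $\eta$ is conformal the Beltrami coefficient is trivial, so in fact the two associated polynomials are affinely conjugate. The key point — and this is where the analysis of critical orbits from Lemma~\ref{umbilical_cubic} and the subsequent remark is essential — is that $\eta$ carries the two \emph{dynamically marked} critical orbits of $g_{\widetilde{a},\widetilde{b}}^{\circ 6n}\vert_U$ to the \emph{correspondingly marked} critical orbits of $g_{\widetilde{a},\widetilde{b}}^{\circ 6n}\vert_{\iota(U)}$. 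In particular, $\eta$ is genuinely new data: it cannot be (a restriction of) the tautological conjugacy $g_{\widetilde{a},\widetilde{b}}^{\circ 3n}$ between the germs at $\widetilde{z}$ and $\widetilde{z}^*$, because that map swaps the two topologically distinct critical orbits. Thus $\eta$ and $g_{\widetilde{a},\widetilde{b}}^{\circ 3n}$ differ, and composing one with the inverse of the other produces a nontrivial symmetry.

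Next I would globalize. Because $U$ is a periodic Fatou component of $g_{\widetilde{a},\widetilde{b}}$ of parabolic type, its grand orbit together with the basin of infinity recovers the whole dynamical plane; the functional equation $\eta\circ g_{\widetilde{a},\widetilde{b}}^{\circ 6n}=g_{\widetilde{a},\widetilde{b}}^{\circ 6n}\circ\eta$ together with the already-established extension of $\eta$ past $\widetilde{z}$ (and past the full boundary $\partial U$ by the Montel/locally-connected argument in Lemma~\ref{umbilical_cubic}) lets me analytically continue $\eta$ along the grand orbit. Arguing exactly as in \cite[\S 4]{IM4}, the conjugacy $\eta$ between the cubic polynomial and itself (more precisely, between $g_{\widetilde{a},\widetilde{b}}$ and its conjugate by $\iota$, i.e.\ $g_{\widetilde{a},-\widetilde{b}}$) extends to an affine map of $\C$. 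But $g_{\widetilde{a},-\widetilde{b}}=-g_{\widetilde{a},\widetilde{b}}(-z)$, so modulo the obvious affine changes of coordinates we obtain that $g_{\widetilde{a},\widetilde{b}}$ commutes with a nontrivial affine map, or — combining with the reflection symmetry $\iota$ already present — that $g_{\widetilde{a},\widetilde{b}}$ has an affine automorphism group strictly larger than expected. For a real cubic $g_{a,b}(z)=-z^3-3a^2z+b$ the only possible nontrivial affine symmetry is the point symmetry $z\mapsto -z$ relating $b$ and $-b$; forcing this to be an actual self-conjugacy gives $\widetilde{b}=0$, i.e.\ $(\widetilde{a},\widetilde{b})$ lies on the line $\{b=0\}$.

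The contradiction then comes from how $H'$ was chosen. Recall $H'=\chi_{a_0,b_0}^{-1}(H_i)$ was selected precisely so that $H'$ does not intersect $\{b=0\}$; the critical {\'E}calle height $0$ parameter $(\widetilde{a},\widetilde{b})$ on the root arc of $\partial H'$ inherits this, so $\widetilde{b}\neq 0$. This contradicts $\widetilde{b}=0$, completing the proof. I expect the main obstacle to be the globalization step: verifying carefully that the analytic continuation of $\eta$ along the grand orbit of $U$ is single-valued and closes up to an affine map of the plane — this requires checking that $\eta$ is compatible with the dynamics on the \emph{entire} Julia set (not just on $\overline{U}$ and its preimages), which uses that the parabolic point $\widetilde{z}$ disconnects the Julia set (root arc) and that the puzzle/fiber structure from the rational lamination $\lambda_{\widetilde{a},\widetilde{b}}$ is preserved, together with the rigidity that critically-preperiodic (here, parabolic) maps have no invariant line fields. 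Once $\eta$ is known to be globally affine, pinning down that it must be $z\mapsto -z$ and hence $\widetilde b=0$ is a short algebraic computation in the family $\mathcal{G}$.
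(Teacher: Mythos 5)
Your overall architecture matches the paper's: assume the path exists, invoke Lemma~\ref{umbilical_cubic} to get a conformal conjugacy $\eta$ between the two polynomial-like restrictions of $g_{\widetilde{a},\widetilde{b}}^{\circ 6n}$, upgrade this to a global affine symmetry, deduce $\widetilde{b}=0$, and contradict the choice of $H'$. The endgame (an affine self-conjugacy of $g_{\widetilde{a},\widetilde{b}}^{\circ 6n}$ must be $z\mapsto -z$, forcing $\widetilde{b}=0$) is exactly the paper's Case~1. But there is a genuine gap at the step you yourself flag as ``the main obstacle'': passing from a conformal conjugacy between two polynomial-like restrictions to a global affine map. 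Naive analytic continuation of $\eta$ along the grand orbit of $U$ does not work: to spread $\eta$ to preimages you must invert $g_{\widetilde{a},\widetilde{b}}^{\circ 6n}$, which branches at the many critical points outside $\overline{U}$, and there is no reason for the continuation to be single-valued or to close up over the whole plane. The correct tool — and what the paper actually uses — is Inou's theorem on extending local analytic conjugacies (\cite[Theorem~1]{I2}), whose conclusion is weaker than an affine conjugacy: it produces a polynomial $p$ and semi-conjugacies $p_1,p_2$ with $g^{\circ 6n}\circ p_i=p_i\circ p$, where $\deg p_1=\deg p_2$ may exceed $1$. The paper must then run a separate argument (its Case~2) using Ritt--Engstrom decomposition theory, primality of the degree $3$, and the parabolic critical-orbit structure to rule out the higher-degree case; your proposal has no substitute for this. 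Your appeal to the absence of invariant line fields is not relevant here — that is a tool for quasiconformal rigidity, not for extending a conformal conjugacy beyond a polynomial-like domain.

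Two smaller issues. First, the opening ``straightening'' step buys nothing: the two restrictions are restrictions of the \emph{same} polynomial and are already anti-conformally conjugate by $\iota$, so hybrid/affine equivalence of their straightenings carries no information; the entire content lies in the existence of an orientation-preserving conjugacy respecting the dynamically marked critical orbits, which straightening does not see. Second, the conjugate of $g_{\widetilde{a},\widetilde{b}}$ by $\iota$ is $g_{\widetilde{a},\widetilde{b}}$ itself (it is a real polynomial); it is the map $z\mapsto -z$ (equivalently $z\mapsto -z^*$) that relates $g_{a,b}$ to $g_{a,-b}$. This does not derail the intended conclusion, but the sentence as written conflates the two symmetries.
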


\begin{figure}[ht!]
\begin{center}
\includegraphics[width=0.48\linewidth]{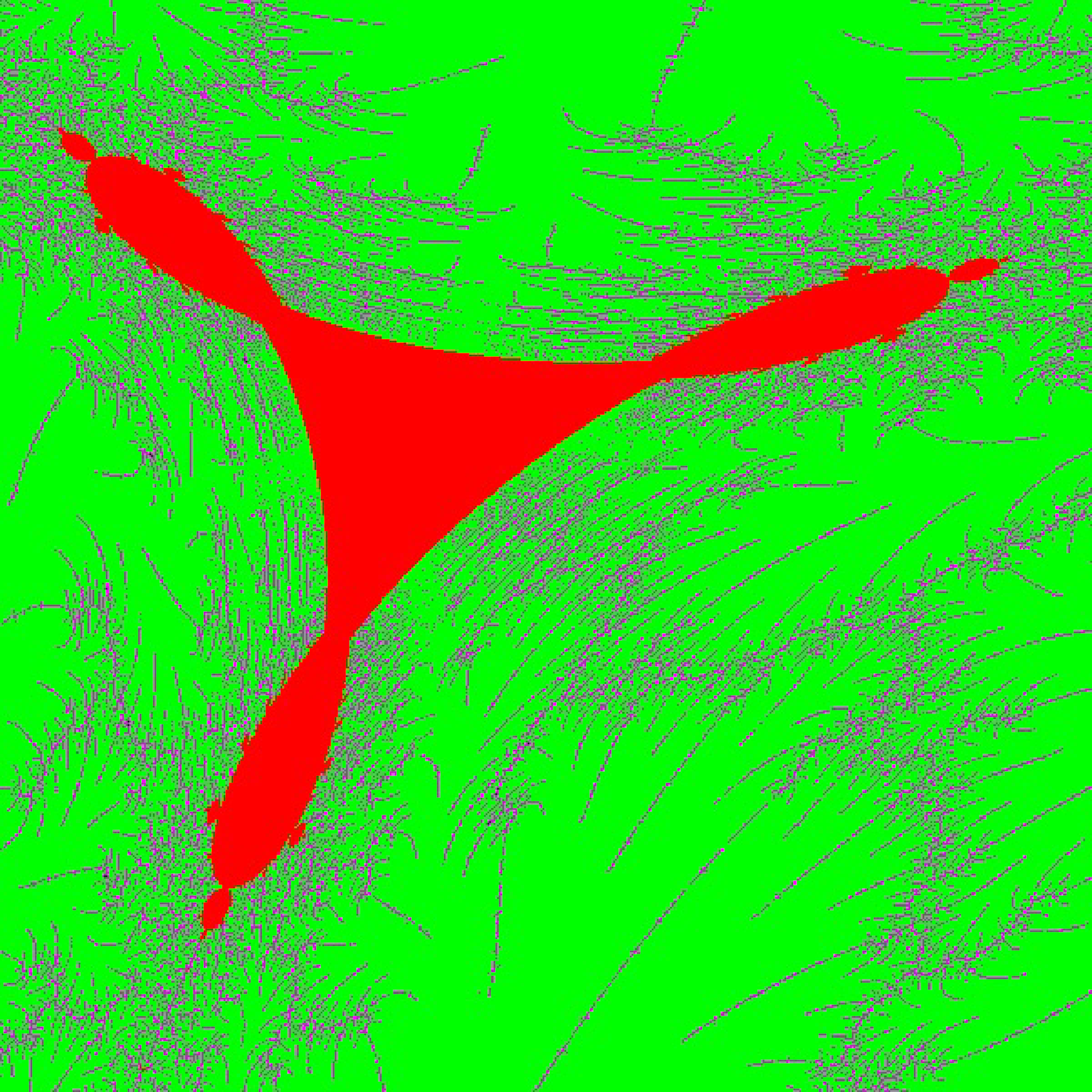}
\end{center}
\caption{Wiggling of an umbilical cord for a Tricorn-like set in the real cubic locus.}
\label{umbilical_wiggle_cubic}
\end{figure}

\begin{proof}
We have already showed in Lemma~\ref{umbilical_cubic} that the existence of such a path $\gamma$ would imply that the polynomial-like restrictions $g_{\widetilde{a},\widetilde{b}}^{\circ 6n}: U^{'} \rightarrow g_{\widetilde{a},\widetilde{b}}^{\circ 6n}(U^{'})$ (where $U^{'}$ is a neighborhood of $\overline{U}$), and $g_{\widetilde{a},\widetilde{b}}^{\circ 6n}: \iota(U^{'})\rightarrow g_{\widetilde{a},\widetilde{b}}^{\circ 6n}(\iota(U^{'}))$ are conformally conjugate. Applying \cite[Theorem~1]{I2} to this situation, we obtain polynomials $p$, $p_1$ and $p_2$ such that 
\begin{align*}
g_{\widetilde{a},\widetilde{b}}^{\circ 6n} \circ p_1 = p_1 \circ p,\ g_{\widetilde{a},\widetilde{b}}^{\circ 6n}\circ p_2 = p_2 \circ p.
\end{align*}
Moreover, since the product dynamics $\left(g_{\widetilde{a},\widetilde{b}}^{\circ 6n},\ g_{\widetilde{a},\widetilde{b}}^{\circ 6n}\right)$ is globally self-conjugate by $(\iota\times\iota)\circ q$, where $q:\mathbb{C}^2\rightarrow\mathbb{C}^2,\ q(z,\ w)=(w,\ z)$, it follows from the proof of \cite[Theorem~1]{I2} that $\Deg p_1 = \Deg p_2.$

Moreover, by Theorem \cite[Theorem~1]{I2}, $p$ has a polynomial-like restriction $p: V\rightarrow p(V)$ which is conformally conjugate to $g_{\widetilde{a},\widetilde{b}}^{\circ 6n}: U^{'}\rightarrow g_{\widetilde{a},\widetilde{b}}^{\circ 6n}(U^{'})$ by $p_1$, and to $g_{\widetilde{a},\widetilde{b}}^{\circ 6n}: \iota(U^{'})\rightarrow g_{\widetilde{a},\widetilde{b}}^{\circ 6n}(\iota(U^{'}))$ by $p_2$. We now consider two cases.

\noindent \textbf{Case 1: \boldmath{$\Deg (p_1)=\Deg (p_2)=1$}.}\quad
Set $p_3:=p_1\circ p_2^{-1}$. Then $p_3$ is an affine map commuting with $g_{\widetilde{a},\widetilde{b}}^{\circ 6n}$, and conjugating the two polynomial-like restrictions of $g_{\widetilde{a},\widetilde{b}}^{\circ 6n}$ under consideration. Clearly, $p_3\neq \mathrm{id}$. An easy computation (using the fact that $g_{\widetilde{a},\widetilde{b}}^{\circ 6n}$ is a centered real polynomial) now shows that $p_3(z)=-z$, and hence $\widetilde{b}=0$. 

\noindent \textbf{Case 2: \boldmath{$\Deg (p_1)=\Deg (p_2)=k>1$}.}\quad
We will first prove by contradiction that $\gcd(\Deg g_{\widetilde{a},\widetilde{b}}^{\circ 6n}, \Deg p_1) > 1$. To do this, let $\gcd(\Deg g_{\widetilde{a},\widetilde{b}}^{\circ 6n}, \Deg p_1) = 1$. Now we can apply \cite[Theorem~8]{I2} to our situation. Since $g_{\widetilde{a},\widetilde{b}}^{\circ 6n}$ is parabolic, it is neither a power map, nor a Chebyshev polynomial. Hence, there exists some non-constant polynomial $P$ such that $g_{\widetilde{a},\widetilde{b}}^{\circ 6n}$ is affinely conjugate to the polynomial $h(z):=z^r(P(z))^k$, and $p_1(z)=z^k$ (up to affine conjugacy). If $r\geq 2$, then $h(z)$ has a super-attracting fixed point at $0$. But $g_{\widetilde{a},\widetilde{b}}^{\circ 6n}$, which is affinely conjugate to $h(z)$, has no super-attracting fixed point. Hence, $r =0$ or $1$. By degree consideration, we have $3^{6n} = r + ks$, where $\Deg P = s$. The assumption  $\gcd(\Deg g_{\widetilde{a},\widetilde{b}}^{\circ 6n}, \Deg p_1)=\gcd(3^{6n}, k)= 1$ implies that $r=1$, i.e.,\ $h(z)=z(P(z))^k$. Now the fixed point $0$ for $h$ satisfies $h^{-1}(0)=\{ 0\} \cup P^{-1}(0)$, and any point in $P^{-1}(0)$ has a local mapping degree $k$ under $h$. The same must hold for the affinely conjugate polynomial $g_{\widetilde{a},\widetilde{b}}^{\circ 6n}$: there exists a fixed point (say $x$) for $g_{\widetilde{a},\widetilde{b}}^{\circ 6n}$ such that any point in $(g_{\widetilde{a},\widetilde{b}}^{\circ 6n})^{-1}(x)$ has mapping degree $k$ (possibly) except for $x$; in particular, all points in $(g_{\widetilde{a},\widetilde{b}}^{\circ 6n})^{-1}(x)\setminus \{ x \}$ are critical points for $g_{\widetilde{a},\widetilde{b}}^{\circ 6n}$ (since $k>1$). But this implies that $g_{\widetilde{a},\widetilde{b}}^{\circ 6n}$ has a finite critical orbit, which is a contradiction to the fact that all critical orbits of $g_{\widetilde{a},\widetilde{b}}^{\circ 6n}$ non-trivially converge to parabolic fixed points.

Now applying Engstrom's theorem \cite{Eng} (see also \cite[Theorem~11, Corollary~12, Lemma~13]{I2}), there exist polynomials $\alpha_1, \beta_1, p_{1,1}$ such that
\begin{align*}
 p &= \beta_1 \circ \alpha_1, & 
 p_{1,1} \circ (\alpha_1 \circ \beta_1) &= g_{\widetilde{a},\widetilde{b}}^{\circ 6n} \circ p_{1,1}, \\
 p_1 &= p_{1,1} \circ \alpha_1, &
 \deg \alpha_1 &= \gcd(\deg p , \deg p_1).
\end{align*}
In particular, $\alpha_1 \circ \beta_1$ is semiconjugate to $g_{\widetilde{a},\widetilde{b}}^{\circ 6n}$ by $p_{1,1}$ with $\deg p_{1,1} < \deg p_1$.
By repeating the argument, there are polynomials $\alpha_j, \beta_j, p_{1,j}$ ($j=2,\dots N$) such that
\begin{align*}
 \alpha_{j-1} \circ \beta_{j-1} &= \beta_j \circ \alpha_j, & 
 p_{1,j} \circ (\alpha_j \circ \beta_j) &= g_{\widetilde{a},\widetilde{b}}^{\circ 6n} \circ p_{1,j}, \\
 p_{1,j-1} &= p_{1,j} \circ \alpha_j, &
 \deg \alpha_j &= \gcd(\deg p_{1,j-1} , \deg p),
\end{align*}
and $\gcd(\deg p_{1,N}, \deg p) =1$.
Note that $\deg \alpha_j \circ \beta_j = \deg p$.
Then by the same argument as above, we have $\deg p_{1,N}=1$, i.e.,\ $\alpha_N \circ \beta_N$ is affinely conjugate to $g_{\widetilde{a},\widetilde{b}}^{\circ 6n}$,
so we may assume they are equal indeed.

Since $g_{\widetilde{a},\widetilde{b}}$ is a prime polynomial under composition (since its degree is a prime number),
the following \emph{chain} between $p$ and $g_{\widetilde{a},\widetilde{b}}^{\circ 6n}$
\begin{align}
 \label{eqn:chain}
 p &= \beta_1 \circ \alpha_1, &
 \alpha_{j-1} \circ \beta_{j-1} &= \beta_j \circ \alpha_j &
 \alpha_N \circ \beta_N = g_{\widetilde{a}, \widetilde{b}}^{\circ 6n}
\end{align}
implies that every $\alpha_j \circ \beta_j$ is affinely conjugate to $g_{\widetilde{a},\widetilde{b}}^{\circ 6n}$, and so is $p$.


Therefore, $p_1$ commutes with $g_{\widetilde{a},\widetilde{b}}^{\circ 6n}$. As $g_{\widetilde{a},\widetilde{b}}^{\circ 6n}$ is neither a power map nor a Chebyshev polynomial, $p_1= g_{\widetilde{a},\widetilde{b}}^{\circ k_1}$, for some $k_1\in \mathbb{N}$ (up to affine conjugacy). The same is true for $p_2$ as well; i.e.,\ $p_2= g_{\widetilde{a},\widetilde{b}}^{\circ k_1}$ (up to affine conjugacy). 

Therefore, there is a polynomial-like restriction of $p=g_{\widetilde{a},\widetilde{b}}^{\circ 6n}: V\rightarrow g_{\widetilde{a},\widetilde{b}}^{\circ 6n}(V)$, which is conformally conjugate to $g_{\widetilde{a},\widetilde{b}}^{\circ 6n}: U^{'}\rightarrow g_{\widetilde{a},\widetilde{b}}^{\circ 6n}(U^{'})$ by $p_1=g_{\widetilde{a},\widetilde{b}}^{\circ k_1}$, and to $g_{\widetilde{a},\widetilde{b}}^{\circ 6n}: \iota(U^{'})\rightarrow g_{\widetilde{a},\widetilde{b}}^{\circ 6n}(\iota(U^{'}))$ by $p_2=g_{\widetilde{a},\widetilde{b}}^{\circ k_1}$. But the dynamical configuration implies that this is impossible (since there is only one parabolic cycle, and the unique cycle of immediate parabolic basins contains two critical points of $g_{\widetilde{a},\widetilde{b}}$, either $p_1$ or $p_2$ must have a critical point in their corresponding conjugating domain).

Therefore, we have showed that the existence of such a path $p$ would imply that $\widetilde{b}=0$. But this contradicts our assumption that $H^{\prime}$ does not intersect the line $\{b=0\}$. This completes the proof of the theorem.
\end{proof}

Using Theorem~\ref{real_germ}, we can now proceed to prove that the straightening map $\chi_{a_0,b_0}: \mathcal{R}(a_0,b_0) \to \mathcal{M}_2^*$ is discontinuous.

\begin{proof}[Proof of Theorem~\ref{Straightening_discontinuity_2}]
We will stick to the terminologies used throughout this section. We will assume that the map $\chi_{a_0,b_0}: \mathcal{R}(a_0,b_0) \to \mathcal{M}_2^*$ is continuous, and arrive at a contradiction. Due to technical reasons, we will split the proof in two cases.

\noindent \textbf{Case 1: \boldmath{$n>1$}.} \quad We have observed that when $n$ is larger than one, $\mathcal{R}(a_0, b_0)$ is compact. Moreover, the map $\chi_{a_0,b_0}$ is injective. Since an injective continuous map from a compact topological space onto a Hausdorff topological space is a homeomorphism, it follows that $\chi_{a_0,b_0}$ is a homeomorphism from $\mathcal{R}(a_0, b_0)$ onto its range (we do not claim that $\chi_{a_0, b_0}(\mathcal{R}(a_0, b_0)) = \mathcal{M}_2^*$). In particular, $\chi_{a_0, b_0}(\mathcal{R}(a_0, b_0))$ is closed. Since real hyperbolic quadratic polynomials are dense in $\mathbb{R}\cap\mathcal{M}_2^*$ \cite{GS, L4} (the Tricorn and the Mandelbrot set agree on the real line), it follows from Proposition~\ref{prop:real cubic onto hyp} and the $3$-fold rotational symmetry of the Tricorn that $\left(\mathbb{R}\cup \omega\mathbb{R}\cup \omega^2\mathbb{R}\right)\cap \mathcal{M}_2^* \subset \chi_{a_0, b_0}(\mathcal{R}(a_0, b_0))$ (where $\omega=e^{\frac{2\pi i}{3}}$).

Note that $-1.75$ is a parabolic parameter that lies on the boundary of the real period $3$ `airplane' component of the Tricorn, and $-1.25$ is the root of the real period $4$ component of the Tricorn that bifurcates from the real period $2$ `basilica' component. Clearly, $(-1.75,-1.25]\subset\mathcal{M}_2^*$, and $(-1.75,-1.25]$ is disjoint from the unique real period $3$ hyperbolic component of the Tricorn. Setting $\gamma$ as $(-1.75,-1.25]$ or one of its rotates by angle $\pm2\pi/3$, we conclude that $\gamma$ is an arc in $\mathcal{M}_2^*$ that lies outside of $\overline{H_i}$ (where $H_i=\chi_{a_0,b_0}(H')$), and lands at the critical {\'E}calle height $0$ parameter on the root parabolic arc of $\partial H_i$. By our assumption, $\chi_{a_0, b_0}$ is a homeomorphism such that $\gamma\subset\chi_{a_0,b_0}(\mathcal{R}(a_0,b_0))$; and hence the curve $\chi_{a_0, b_0}^{-1}\left(\gamma\right)$ lies in the exterior of $\overline{H^{\prime}}$, and lands at the critical {\'E}calle height $0$ parameter on the root arc of $\partial H^{\prime}$ (critical {\'E}calle heights are preserved by hybrid equivalences). But this contradicts Theorem~\ref{real_germ}, and proves the theorem for $n>1$.

\noindent \textbf{Case 2: \boldmath{$n=1$}.} \quad Finally we look at $(a_0,b_0)=(\frac{1}{\sqrt{2}},0)$. Note that since $g_{a_0,b_0}$ is not primitive in this case, $\mathcal{R}(a_0, b_0)$ is not compact. So we cannot use the arguments of Case 1 directly, and we have to work harder to demonstrate that the image of the straightening map contains a suitable interval of the real line.

In the dynamical plane of $g_{a_0,b_0}$, the real line consists of two external rays (at angles $1/4$ and $3/4$) as well as their common landing point $0$, which is the unique real fixed point of $g_{a_0,b_0}$. Recall that the rational lamination of every polynomial in $\mathcal{R}(a_0, b_0)$ is stronger than that of $g_{a_0,b_0}$, and the dynamical $1/4$ and $3/4$-rays are always contained in the real line. Therefore in the dynamical plane of every $(a,b)\in \mathcal{R}(a_0, b_0)$, the real line consists of two external rays (at angles $1/4$ and $3/4$) as well as their common landing point which is repelling. In order to obtain a period $1$ renormalization for any polynomial in $\mathcal{R}(a_0, b_0)$, one simply has to perform a standard Yoccoz puzzle construction starting with the $1/4$ and $3/4$ rays, and then thicken the depth $1$ puzzle (for construction of Yoccoz puzzles and the thickening procedure which yields compact containment of the domain of the polynomial-like map in its range, see \cite[p.~82]{M5}). Now, the only possibility of having a non-renormalizable map as a limit of maps in $\mathcal{R}(a_0, b_0)$ is if the dynamical  $1/4$ and $3/4$-rays land at parabolic points. This can happen in two different ways. If these two rays land at a common parabolic point (since such a parabolic fixed point would have two petals, it would prohibit the thickening procedure), then by Lemma~\ref{indiff_fixed}, the multiplier of the parabolic fixed point must be $-1$. On the other hand, if the dynamical $1/4$ and $3/4$-rays land at two distinct parabolic points, then those parabolic points would form a $2$-cycle with multiplier $+1$ (the conclusion about the multiplier follows from the fact that the first return map fixes each dynamical ray). Therefore, $\overline{\mathcal{R}(a_0, b_0)}\setminus\mathcal{R}(a_0, b_0) \subset \per_1(-1)\cup\per_2(1)$.

As in the previous case, there exists a curve $\gamma\subset \mathcal{M}_2^*$ that lies outside of $\overline{H_i}$ and lands at the critical {\'E}calle height $0$ parameter on the root parabolic arc of $\partial H_i$. Moreover, $H_i$ is in the range of $\chi_{a_0, b_0}$, and $H'=\chi_{a_0, b_0}^{-1}(H_i)$ does not intersect the line $\{b=0\}$. To complete the proof of the theorem, it suffices to show that there is a compact set $K \subset\mathcal{R}(a_0, b_0)$ with $\overline{H_i}\cup \gamma\subset\chi_{a_0,b_0}(K)$. Indeed, if there exists such a set $K$, then $\chi_{a_0, b_0}\vert_K$ would be a homeomorphism (recall that $\chi_{a_0, b_0}$ is continuous by assumption). Therefore, the curve $\chi_{a_0, b_0}^{-1}(\gamma)$ would lie in the exterior of $\overline{H^{\prime}}$, and land at the critical {\'E}calle height $0$ parameter on the root arc of $\partial H^{\prime}$. Once again, this contradicts Theorem~\ref{real_germ}, and completes the proof in the $n=1$ case.

Let us now prove the existence of the required compact set $K$. Note that since $\overline{H'}$ is contained in the union of the hyperbolicity locus and $\per_6(1)$ of the family $\mathcal{G}$, it follows that $\overline{H'}$ is disjoint from $\per_1(-1)\cup\per_2(1)$. Hence $\overline{H'}$ is contained in a compact subset of $\mathcal{R}(a_0, b_0)$.

Let us denote the hyperbolic parameters of $\gamma$ by $\gamma^{\mathrm{hyp}}$. By Lemma~\ref{prop:real cubic onto hyp}, $\gamma^{\mathrm{hyp}}$ is contained in the range of $\chi_{a_0,b_0}$. We will now show that $\chi_{a_0,b_0}^{-1}(\gamma^{\mathrm{hyp}})$ does not accumulate on $\per_1(-1)\cup\per_2(1)$; i.e.,\ $\chi_{a_0,b_0}^{-1}(\gamma^{\mathrm{hyp}})$ is contained in a compact subset of $\mathcal{R}(a_0, b_0)$. To this end, observe that $\gamma^{\mathrm{hyp}}$ is contained in the $1/2$-limb of a period $2$ hyperbolic component of $\mathcal{M}_2^*$. So for each parameter on $\gamma^{\mathrm{hyp}}$, two $4$-periodic dynamical rays land at a common point of the corresponding Julia set. Hence for each parameter on $\chi_{a_0,b_0}^{-1}(\gamma^{\mathrm{hyp}})$, two $4$-periodic dynamical rays (e.g. at angles $61/80$ and $69/80$) land at a common point. If $\chi_{a_0,b_0}^{-1}(\gamma^{\mathrm{hyp}})$ accumulates on some parameter on the parabolic curves $\per_1(-1)\cup \per_2(1)$, then the corresponding dynamical rays at angles $61/80$ and $69/80$ would have to co-land in the dynamical plane of that parameter. But there is no such landing relation for parameters on $\per_1(-1)\cup\per_2(1)$. This proves that $\chi_{a_0,b_0}^{-1}(\gamma^{\mathrm{hyp}})$ is contained in a compact subset of $\mathcal{R}(a_0, b_0)$. 

Combining the observations of the previous two paragraphs, we conclude that there is a compact subset $K$ of $\mathcal{R}(a_0, b_0)$ that contains $\overline{H'}\cup\chi_{a_0,b_0}^{-1}(\gamma^{\mathrm{hyp}})$. Since we assumed $\chi_{a_0, b_0}$ to be continuous, it follows that $\chi_{a_0, b_0}(K)$ is a closed set containing $\gamma^{\mathrm{hyp}}$. But $\gamma^{\mathrm{hyp}}$ is dense in $\gamma$ (by the density of hyperbolic quadratic polynomials in $\mathbb{R}$). Therefore, $\gamma\subset\chi_{a_0, b_0}(K)$. Therefore, $K$ is the required compact subset of $\mathcal{R}(a_0, b_0)$ such that $\chi_{a_0,b_0}(K)\supset\overline{H_i}\cup\gamma$.
\end{proof}

\section{Recovering Unicritical Maps from Their Parabolic Germs}\label{cauliflower_recover}

Recall that one of the key steps in the proof of Theorem~\ref{Straightening_discontinuity_2} (more precisely, in the proof of Lemma~\ref{umbilical_cubic}) was to extend a carefully constructed local (germ) conjugacy to a semi-local (polynomial-like map) conjugacy, which allowed us to conclude that the corresponding polynomials are affinely conjugate. The extension of the germ conjugacy made use of some of its special properties; in particular, we used the fact that the germ conjugacy preserves the post-critical orbits. However, in general, a conjugacy between two polynomial parabolic germs has no reason to preserve the post-critical orbits (germ conjugacies are defined locally, and post-critical orbits are global objects).

Motivated by the above discussion, we will prove a rigidity property for unicritical holomorphic and anti-holomorphic parabolic polynomials in this section (which answers \cite[Question~3.6]{IM4} for unicritical polynomials). In particular, we will show that a unicritical holomorphic polynomial having a parabolic cycle is completely determined by the conformal conjugacy class of its parabolic germ or equivalently, by its {\'E}calle-Voronin invariants. 

We will need the concept of extended horn maps, which are the natural maximal extensions of horn maps. For the sake of completeness, we include the basic definitions and properties of horn maps. For simplicity, we will only define it in the context of parabolic points with multiplier $1$, and a single petal. More comprehensive accounts on these ideas can be found in \cite[\S 2]{BE}.

Let $p$ be a (parabolic) holomorphic polynomial, $z_0$ be such that $p^{\circ k}(z_0)=z_0$, and $p^{\circ k}(z) = z+(z-z_0)^2+\mathcal{O}(\vert z-z_0\vert^3)$ locally near $z_0$. The parabolic point $z_0$ of $p$ has exactly two petals, one attracting and one repelling (denoted by $\mathcal{P}^{\textrm{att}}$ and $\mathcal{P}^{\textrm{rep}}$ respectively). The intersection of the two petals has two connected components. We denote by $\mathcal{U}^+$ the connected component of $\mathcal{P}^{\textrm{att}} \cap \mathcal{P}^{\textrm{rep}}$ whose image under the Fatou coordinates is contained in the upper half-plane, and by $\mathcal{U}^-$ the one whose image under the Fatou coordinates is contained in the lower half-plane. We define the ``sepals'' $\mathcal{S}^{\pm}$ by
\begin{align*}
\displaystyle \mathcal{S}^{\pm} = \bigcup_{n \in \mathbb{Z}} p^{\circ nk} (\mathcal{U}^{\pm})
\end{align*}

Note that each sepal contains a connected component of the intersection of the attracting and the repelling petals, and they are invariant under the first holomorphic return map of the parabolic point. The attracting Fatou coordinate $\psi^{\textrm{att}}$ (respectively the repelling Fatou coordinate $\psi^{\textrm{rep}}$) can be extended to $\mathcal{P}^{\textrm{att}} \cup \mathcal{S}^+ \cup \mathcal{S}^-$ (respectively to $\mathcal{P}^{\textrm{rep}} \cup \mathcal{S}^+ \cup \mathcal{S}^-$) such that they conjugate the first holomorphic return map to the translation $\zeta \mapsto \zeta+1$.

\begin{definition}[Lifted horn maps]\label{lifted_horn}
Let us define $V^- = \psi^{\textrm{rep}}(\mathcal{S}^-)$, $V^+ = \psi^{\textrm{rep}}(\mathcal{S}^+)$, $W^- = \psi^{\textrm{att}}(\mathcal{S}^-)$, and $W^+ =\psi^{\textrm{att}}(\mathcal{S}^+)$. Then, denote by $H^- : V^- \rightarrow W^-$ the restriction of $ \psi^{\textrm{att}} \circ (\psi^{\textrm{rep}})^{-1}$ to $V^-$, and by $H^+: V^+ \rightarrow W^+$ the restriction of $\psi^{\textrm{att}} \circ \left(\psi^{\textrm{rep}}\right)^{-1}$ to $V^+$. We refer to $H^{\pm}$ as lifted horn maps for $p$ at $z_0$.
\end{definition}

Lifted horn maps are unique up to pre and post-composition by translation. Note that such translations must be composed with both of the $H^\pm$ at the same time.
The regions $V^{\pm}$ and $W^{\pm}$ are invariant under translation by $1$. Moreover, the asymptotic development of the Fatou coordinates implies that the regions $V^+$ and $W^+$ contain an upper half-plane, whereas the regions $V^-$ and $W^-$ contain a lower half-plane. Consequently, under the projection $\Pi : \zeta \mapsto w = \exp(2i\pi \zeta)$, the regions $V^+$ and $W^+$ project to punctured neighborhoods $\mathcal{V}^+$ and $\mathcal{W}^+$ of $0$, whereas $V^-$ and $W^-$ project to punctured neighborhoods $\mathcal{V}^-$ and $\mathcal{W}^-$ of $\infty$. 

The lifted horn maps $H^{\pm}$ satisfy $H^{\pm}(\zeta + 1) = H^{\pm}(\zeta) + 1$ on $V^{\pm}$.  Thus, they project to mappings $h^{\pm} : \mathcal{V}^{\pm} \rightarrow \mathcal{W}^{\pm}$ such that the following diagram commutes:

\begin{center}
$\begin{CD}
V^{\pm} @>H^{\pm}>> W^{\pm}\\
@VV\Pi V @VV\Pi V\\
\mathcal{V}^{\pm} @>h^{\pm}>> \mathcal{W}^{\pm}
\end{CD}$
\end{center}

It is well-known that $\exists$ $\eta, \eta' \in \mathbb{C}$ such that $H^+(\zeta) \approx \zeta + \eta$ when $\im(\zeta) \rightarrow +\infty$, and $H^-(\zeta) \approx \zeta + \eta'$ when $\im(\zeta) \rightarrow -\infty$. This proves that $h^+ (w) \rightarrow 0$ as $w \rightarrow 0$. Thus, $h^+$ extends analytically to $0$ by $h^+ (0) = 0$. One can show similarly that $h^-$ extends analytically to $\infty$ by $h^- (\infty) = \infty$.

\begin{definition}[Horn Maps]\label{horn}
The maps $h^{+}: \mathcal{V}^+\cup \{ 0\}\rightarrow \mathcal{W}^+\cup \{ 0\}$, and $h^{-}: \mathcal{V}^-\cup \{ \infty\}\rightarrow \mathcal{W}^-\cup \{ \infty\}$ are called \emph{horn maps} for $p$ at $z_0$.
\end{definition}

Let $U_0$ be the immediate basin of attraction of $z_0$. Then there exists an extended attracting Fatou coordinate $\psi^{\mathrm{att}}:U_0\to \mathbb{C}$ (which is a ramified covering ramified only over the pre-critical points of $p^{\circ k}$ in $U_0$) satisfying $\psi^{\mathrm{att}}(p^{\circ k}(z))=\psi^{\mathrm{att}}(z)+1$, for every $z\in U_0$ (compare Figure~\ref{chessboard}). Similarly, the inverse of the repelling Fatou coordinate $\psi^{\mathrm{rep}}$ at $z_0$ extends to a holomorphic map $\zeta^{\mathrm{rep}}:\mathbb{C}\to\mathbb{C}$ satisfying $p^{\circ k}(\zeta^{\mathrm{rep}}(\zeta))=\zeta^{\mathrm{rep}}(\zeta+1)$, for every $\zeta\in \mathbb{C}$. We define $D_0^+$ (respectively $D_0^-$) to be the connected component of $\left(\zeta^{\mathrm{rep}}\right)^{-1}(U_0)$ containing an upper half plane (respectively a lower half plane). Furthermore, let $\mathcal{D}_0^{\pm}$ be the image of $D_0^{\pm}$ under the projection $\Pi: \zeta\mapsto w=\exp(2i\pi\zeta)$.

\begin{definition}[Extended Horn Map]
The maps $H^{\pm}:= \psi^{\mathrm{att}}\circ \zeta^{\mathrm{rep}}: D_0^{\pm}\to \mathbb{C}$ are called the \emph{extended lifted horn maps} for $p$ at $z_0$. They project (under $\Pi$) to the holomorphic maps $h^{\pm}:\mathcal{D}_0^{\pm}\to \hat{\mathbb{C}}$, which are called the \emph{extended horn maps} for $p$ at $z_0$.
\end{definition}

\begin{figure}
\begin{center}
\includegraphics[width=0.48\linewidth]{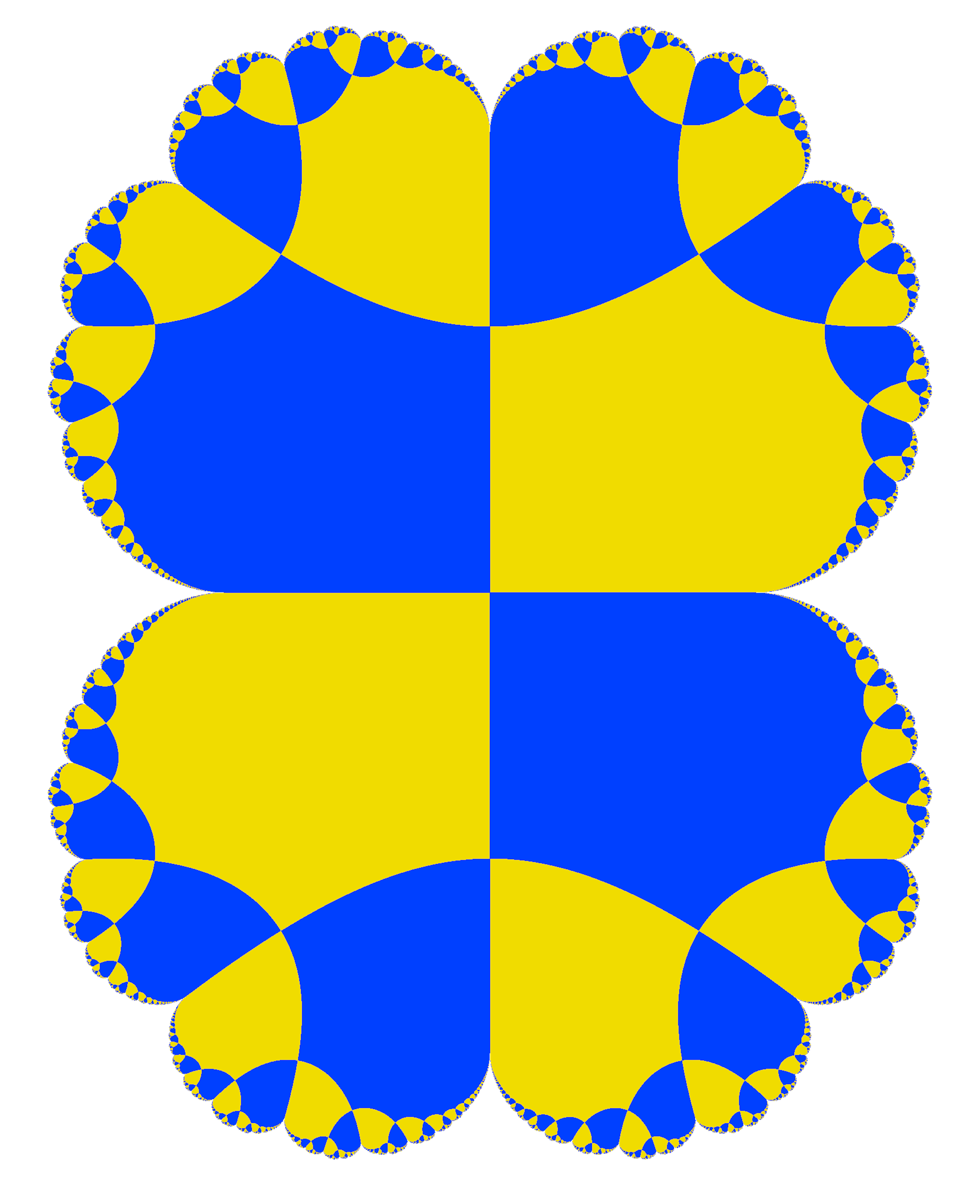}
\end{center}
\caption{The parabolic chessboard for the polynomial $z+z^2$: normalizing $\psi^{\mathrm{att}}(-\frac{1}{2})=0$, each yellow tile biholomorphically maps to the upper half plane, and each blue tile biholomorphically maps to the lower half plane under $\psi^{\mathrm{att}}$. The pre-critical points of $z+z^2$ or equivalently the critical points of  $\psi^{\mathrm{att}}$ are located where four tiles meet (Figure Courtesy Arnaud Ch{\'e}ritat).}
\label{chessboard}
\end{figure}

We will mostly work with the horn map $h^+ : \mathcal{D}_0^+ \to \hat{\mathbb{C}}$. Note that $\mathcal{D}_0^+\cup\{0\}$ is the maximal domain of analyticity of the map $h^+$. This can be seen as follows (see \cite[Theorem~2.31]{LY} for a more general assertion of this type). Let $z' \in \partial U_0$, then there exists a sequence of pre-parabolic points $\{ z_n\}_{n\geq 1}\subset \partial U_0$ converging to $z'$ such that for each $n$, there is an arc $\gamma_n:\left(0,1\right]\to U_0$ with $\gamma_n(0)=z_n$ satisfying the properties $\re(\psi^{\mathrm{att}}(\gamma_n\left(0,1\right]))=$ constant, and $\displaystyle \lim_{s\downarrow 0} \im(\psi^{\mathrm{att}}(\gamma_n(s)))=+\infty$. Therefore, for every $w'\in \partial \mathcal{D}_0^+$, there exists a sequence of points $\{ w_n\}_{n\geq 1}\subset \partial \mathcal{D}_0^+$ converging to $w'$ such that for each $n$, there is an arc $\Gamma_n:(0,1]\to \mathcal{D}_0^+$ with $\Gamma_n(0)=w_n$ satisfying $\displaystyle \lim_{s\downarrow 0} h^+(\Gamma_n(s))=0$. It follows from the identity principle that if we could continue $h^+$ analytically in a neighborhood of $w'$, then $h^+$ would be identically $0$, which is a contradiction to the fact that $h^+$ is asymptotically a rotation near $0$.  

Let us now recall the definitions of some basic objects for the multibrot set.

\begin{definition}[Multibrot Sets]
The \emph{multibrot set} of degree $d$ is defined as $$\mathcal{M}_d = \lbrace c \in \mathbb{C} : K(p_c)\ \mathrm{is\ connected}\rbrace,$$ where $K(p_c)$ is the filled Julia set of the unicritical holomorphic polynomial $p_c(z)=z^d+c$.
\end{definition}

Recall that for a polynomial $p_c$ with a parabolic cycle, the characteristic Fatou component of $p_c$ is defined as the unique Fatou component of $p_c$ containing the critical value $c$. The characteristic parabolic point of $p_c$ is defined as the unique parabolic point on the boundary of the characteristic Fatou component. 

A parabolic parameter $c$ lying on the boundary of a period $n$ hyperbolic component $H$ of $\mathcal{M}_d$ is called the \emph{root} of $H$ if the characteristic Fatou component of $p_c$  has period $n$, and the characteristic parabolic point of $p_c$ is a cut-point of the Julia set. On the other hand, a parabolic parameter $c$ lying on the boundary of a period $n$ hyperbolic component $H$ of $\mathcal{M}_d$ is called a \emph{co-root} of $H$ if the characteristic Fatou component of $p_c$ has period $n$, but the characteristic parabolic point of $p_c$ is not a cut-point of the Julia set. Every hyperbolic component (of period $n>1$) of $\mathcal{M}_d$ has exactly one root on its boundary. A hyperbolic component $H$ of $\mathcal{M}_d$ is said to be \emph{satellite} (respectively, \emph{primitive}) if the unique root point on its boundary lies on the boundary of another hyperbolic component (respectively, does not lie on the boundary of any other hyperbolic component).  We refer the readers to \cite{EMS} for a detailed discussion of these notions.

With these preparations, we are now ready to prove our first local-global principle for parabolic germs.

\begin{definition}
\begin{itemize}
\item Let $\mathcal{M}_d^{\mathrm{cusp}}$ be the union of the set of all root points of the primitive hyperbolic components, and the set of all co-root points of the multibrot set $\mathcal{M}_d$. For $c_1, c_2 \in \mathcal{M}_d^{\mathrm{cusp}}$, we write $c_1 \sim c_2$ if $z^d+c_1$ and $z^d+c_2$ are affinely conjugate; i.e.,\ if $c_2/c_1$ is a $(d-1)$-st root of unity. We denote the set of equivalence classes under this equivalence relation by $\left(\mathcal{M}_d^{\mathrm{cusp}}/\mathord\sim\right)$.

\item Let $\Diff^{+1}(\mathbb{C},0)$ be the set of conformal conjugacy classes of holomorphic germs (at $0$) fixing $0$, and having multiplier $+1$ at $0$.
\end{itemize}
\end{definition}

For $c\in \mathcal{M}_d^{\mathrm{cusp}}$, let $z_c$ be the characteristic parabolic point of $p_{c}(z)=z^d+c$, and $k$ be the period of $z_c$. Conjugating $p_c^{\circ k}\vert_{N_{z_c}}$ (where $N_{z_c}$ is a sufficiently small neighborhood of $z_c$) by an affine map that sends $z_c$ to the origin, one obtains an element of $\Diff^{+1}(\mathbb{C},0)$. The following lemma settles the germ rigidity for parameters in $\mathcal{M}_d^{\mathrm{cusp}}$ (i.e.,\ for parabolic parameters with a single petal).

\begin{lemma}[Parabolic Germs Determine Co-roots, and Roots of Primitive Components]\label{Parabolic_Germs_Determine_Roots}
The map 
\begin{center}
$\displaystyle \bigsqcup_{d\geq 2}\left(\mathcal{M}_d^{\mathrm{cusp}}/\mathord\sim\right) \to \Diff^{+1}(\mathbb{C},0)$\\
$c\mapsto p_c^{\circ k}\vert_{N_{z_c}}$
\end{center}
is injective.
\end{lemma}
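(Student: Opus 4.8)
\textbf{Proof strategy for Lemma~\ref{Parabolic_Germs_Determine_Roots}.}

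The plan is to recover the polynomial $p_c$ from the conformal conjugacy class of its characteristic parabolic germ by reconstructing enough of the global parabolic dynamics from the local data, and then invoking the classification of unicritical polynomials with a given parabolic cycle. Suppose $c_1 \in \mathcal{M}_{d_1}^{\mathrm{cusp}}$ and $c_2 \in \mathcal{M}_{d_2}^{\mathrm{cusp}}$ have conformally conjugate characteristic parabolic germs; write $k_i$ for the period of the characteristic parabolic point $z_i$ of $p_{c_i}$. The germ conjugacy carries the attracting petal of one germ to that of the other, hence conjugates the \emph{holomorphic} first-return maps $p_{c_i}^{\circ k_i}$ on these petals; passing to Fatou coordinates, it therefore induces an affine identification of the incoming and outgoing {\'E}calle cylinders, and consequently identifies the lifted horn maps $H^+_{c_1}$ and $H^+_{c_2}$ (equivalently the horn maps $h^+_{c_1}$, $h^+_{c_2}$) up to pre- and post-composition by a translation, equivalently up to pre- and post-composition by a rotation $w\mapsto \lambda w$ after projecting. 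This is the standard fact that {\'E}calle--Voronin invariants are a complete conformal invariant of the germ.

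First I would recall that for a unicritical polynomial $p_{c_i}$, the immediate basin $U_0$ of $z_i$ has exactly one critical point of $p_{c_i}^{\circ k_i}$, namely the critical point $0$ of $p_{c_i}$ sitting in the characteristic Fatou component. Hence the extended attracting Fatou coordinate $\psi^{\mathrm{att}}$ on $U_0$ is a branched covering onto $\mathbb{C}$ ramified only over the single grand orbit of the critical value under $\zeta \mapsto \zeta+1$. This means the extended horn map $h^+_{c_i}: \mathcal{D}_0^+ \to \hat{\mathbb{C}}$ is a branched covering with a prescribed ramification structure: its only critical values in a neighborhood of $0$ are the points of the sequence $\lambda^n h^+_{c_i}(w_{\mathrm{crit}})$, and the local degree at the critical point corresponding to the critical value $c_i$ is exactly $d_i$ (the critical point of $p_{c_i}$ has multiplicity $d_i-1$). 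Since $h^+_{c_1}$ and $h^+_{c_2}$ agree up to pre/post-composition by rotations, and since the maximal domain of analyticity $\mathcal{D}_0^+\cup\{0\}$ is intrinsic to the horn map (as established in the paragraph preceding the Multibrot definitions), the two horn maps have the same ramification degree over the ``critical value spot''; this forces $d_1 = d_2 =: d$.

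Next, with $d_1=d_2=d$ in hand, I would upgrade the germ conjugacy to an affine conjugacy between $p_{c_1}$ and $p_{c_2}$. One route: the identification of horn maps gives a biholomorphism between punctured neighborhoods of the puncture in the repelling (outgoing) cylinders that is equivariant; pulling back along $\zeta^{\mathrm{rep}}$ and $\psi^{\mathrm{att}}$, and using that $\psi^{\mathrm{att}}$ is a branched cover of known type, one builds a biholomorphism between neighborhoods of $\overline{U_0}$ for the two maps conjugating $p_{c_1}^{\circ k_1}$ to $p_{c_2}^{\circ k_2}$ and respecting the critical point — i.e. a hybrid equivalence between the two parabolic-like (polynomial-like) restrictions on the characteristic Fatou component. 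This is exactly the type of ``extend the germ conjugacy to a semi-local conjugacy'' step carried out in Lemma~\ref{umbilical_cubic}; here it is cleaner because the conjugacy automatically respects the unique critical orbit, there being only one critical point of $p_{c_i}^{\circ k_i}$ in $U_0$. Having a conjugacy of polynomial-like maps of degree $d$ on the characteristic component, one spreads it over the full cycle of Fatou components and then over the whole basin by the functional equation and Carath\'eodory extension (local connectivity of $\partial U_0$), getting a conjugacy on a neighborhood of the filled Julia set. Standard rigidity of critically-finite/parabolic polynomials in the unicritical family (the parabolic parameter is determined by its rational lamination, and $\mathcal{M}_d^{\mathrm{cusp}}$ parameters are root/co-root points whose combinatorics are pinned down by the horn-map ramification data) then yields that $p_{c_1}$ and $p_{c_2}$ are affinely conjugate, i.e. $c_1 \sim c_2$, which is the injectivity claimed.

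The main obstacle I expect is the degree-recovery step: carefully extracting from the intrinsic horn map — given only up to rotational pre/post-composition — the integer $d$ as a local branching degree, and making sure no other feature of $h^+$ (e.g. branching coming from preimages of the critical value, or from the multiplicity of the puncture) can be confused with it. Concretely, one must argue that among the critical points of $h^+_{c_i}$ accumulating at $0$, the one ``closest'' in the outgoing cylinder to the transit of the critical value has local degree $d$, while all its forward images under $w\mapsto\lambda w$ (which are critical values coming from later iterates of the critical orbit) carry the same $d$, and there is no cancellation; this is where the unicriticality is essential and where the mapping properties of extended horn maps recalled above do the real work. The rest — extending the conjugacy and invoking polynomial rigidity — is by now routine and parallels Lemma~\ref{umbilical_cubic}, with the simplification that the critical-orbit-matching is automatic.
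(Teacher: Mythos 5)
Your first two steps track the paper's proof closely: recovering $d_1=d_2$ from the ramification data of the common extended horn map $h^+$ (the paper cites \cite[Proposition~4]{BE}, which says $h^+$ is a ramified covering with a \emph{unique} critical value of ramification index $d-1$, which disposes of your worry about confusing it with other branching), and then normalizing the germ conjugacy so that it matches critical orbits and extending it, as in Lemma~\ref{umbilical_cubic}, to a conformal conjugacy between the degree-$d$ polynomial-like restrictions of $p_{c_1}^{\circ k_1}$ and $p_{c_2}^{\circ k_2}$ on neighborhoods of $\overline{U_{c_1}}$ and $\overline{U_{c_2}}$. Up to that point the proposal is essentially the paper's argument.

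The final step, however, has a genuine gap. You propose to ``spread'' the conjugacy over the full cycle of Fatou components and then over the whole basin to get a conjugacy on a neighborhood of the filled Julia set, and then invoke rigidity via rational laminations. This does not work: the functional equation $\eta\circ p_{c_1}^{\circ k_1}=p_{c_2}^{\circ k_2}\circ\eta$ only propagates $\eta$ within the dynamics of the $k_i$-th iterates, i.e.\ over the \emph{small} filled Julia set of the polynomial-like restriction, which for $k_i>1$ is a proper subset of $K(p_{c_i})$; it gives no conjugacy between the first iterates $p_{c_1}$ and $p_{c_2}$ and no control over the rest of $K(p_{c_i})$. Nor does the horn-map ramification data ``pin down'' the rational lamination --- two co-roots of different hyperbolic components have identical local ramification structure but different laminations, and ruling out a germ conjugacy between them is precisely what the lemma must prove, so the appeal to lamination rigidity is circular. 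The paper bridges this gap with two nontrivial inputs: Inou's theorem on extending local analytic conjugacies \cite[Theorem~1]{I2}, which converts the conformal conjugacy of polynomial-like restrictions into semi-conjugacy relations $p_{c_1}^{\circ k_1}\circ h_1=h_1\circ h$, $p_{c_2}^{\circ k_2}\circ h_2=h_2\circ h$ through a common polynomial $h$ (in particular forcing $k_1=k_2$ by degree count), followed by a careful Ritt--Engstrom decomposition analysis of these relations (using that $z^d+c$ has only power maps and unicritical maps as prime compositional factors, and that the critical orbit is infinite) to conclude that $p_{c_1}$ and $p_{c_2}$ are affinely conjugate. This algebraic machinery is the essential missing ingredient in your proposal, not a routine verification.
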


The rough idea of the proof of Lemma~\ref{Parabolic_Germs_Determine_Roots} is the following. The assumption that $c$ is a root point of a \emph{primitive} hyperbolic component or a co-root point of a hyperbolic component of period $k$ (i.e.,\ $c\in\mathcal{M}_d^{\textrm{cusp}}$) implies that $p_c$ has exactly one attracting petal at the characteristic parabolic point $z_c$, and hence $p_c^{\circ k}$ restricts to a polynomial-like map in a neighborhood of the closure of its characteristic Fatou component. If the parabolic germs determined by $p_{c_1}^{\circ k_1}$ and $p_{c_2}^{\circ k_2}$ near their characteristic parabolic points (for some $c_i\in\mathcal{M}_{d_i}^{\mathrm{cusp}}$, $i=1,2$) are conformally conjugate, then we will first promote the conformal conjugacy between the parabolic germs to a conformal conjugacy between the polynomial-like restrictions of $p_{c_1}^{\circ k_1}$ and $p_{c_2}^{\circ k_2}$ in small neighborhoods of the closures of their characteristic Fatou components. This would allow us to apply \cite[Theorem~1]{I2} yielding certain polynomial semi-conjugacy relations between $p_{c_1}^{\circ k_1}$ and $p_{c_2}^{\circ k_2}$. Finally, a careful analysis of the semi-conjugacy relations using the reduction step of Ritt and Engstrom will give us an affine conjugacy between  $p_{c_1}$ and $p_{c_2}$.

\begin{proof}[Proof of Lemma~\ref{Parabolic_Germs_Determine_Roots}]
For $i=1,2$, let $c_i\in \mathcal{M}_{d_i}^{\mathrm{cusp}}$, the parabolic cycle of $c_i$ have period $k_i$, the characteristic parabolic points of $p_{c_i}(z)=z^{d_i}+c_i$ be $z_i$, and the characteristic Fatou components of $p_{c_i}$ be $U_{c_i}$. 

We assume that $g_1:=p_{c_1}^{\circ k_1}\vert_{N_{z_1}}$ and $g_2:=p_{c_2}^{\circ k_2}\vert_{N_{z_2}}$ are conformally conjugate by some local biholomorphism $\phi:N_{z_1}\to N_{z_2}$. Then these two germs have the same horn map germ at $0$, and hence $p_{c_1}$ and $p_{c_2}$ have the same extended horn map $h^+$ (recall that the domain of $h^+$ is its maximal domain of analyticity; i.e.,\ $h^+$ is completely determined by the germ of the horn map at $0$). If $\psi^{\mathrm{att}}_{c_2}$ is an extended attracting Fatou coordinate for $p_{c_2}$ at $z_2$, then there exists an extended attracting Fatou coordinate $\psi^{\mathrm{att}}_{c_1}$ for $p_{c_1}$ at $z_1$ such that $\psi^{\mathrm{att}}_{c_1}=\psi^{\mathrm{att}}_{c_2}\circ \phi$ in their common domain of definition. By \cite[Proposition~4]{BE}, $h^+$ is a ramified covering with the unique critical value $\Pi\left(\psi^{\mathrm{att}}_{c_1}(c_1)\right)=\Pi\left(\psi^{\mathrm{att}}_{c_2}(c_2)\right)$. Note that the ramification index of $h^+$ over this unique critical value is $d_1-1=d_2-1$. This shows that $d_1=d_2$. We set $d:=d_1=d_2$.

Furthermore, $\psi^{\mathrm{att}}_{c_1}(c_1)-\psi^{\mathrm{att}}_{c_2}(c_2)=n\in \mathbb{Z}$. We can normalize our attracting Fatou coordinates such that $\psi^{\mathrm{att}}_{c_1}(c_1)=0$, and $\psi^{\mathrm{att}}_{c_2}(c_2)=-n$. Put $\eta:= g_2^{\circ (-n)}\circ \phi$. Then, $\eta$ is a new conformal conjugacy between $g_1$ and $g_2$. We stick to the Fatou coordinate $\psi^{\mathrm{att}}_{c_1}$ for $p_{c_1}$, and define a new Fatou coordinate $\widetilde{\psi}^{\mathrm{att}}_{c_2}$ for $p_{c_2}$ such that $\psi^{\mathrm{att}}_{c_1}=\widetilde{\psi}^{\mathrm{att}}_{c_2}\circ \eta$ in their common domain of definition. Let $N$ be large enough so that $p_{c_2}^{\circ k_2(N+n)}(c_2)$ is contained in the domain of definition of $\phi^{-1}$. Now,
\begin{align*}
\widetilde{\psi}^{\mathrm{att}}_{c_2}(c_2)
&= \widetilde{\psi}^{\mathrm{att}}_{c_2}(p_{c_2}^{\circ Nk_2}(c_2))-N
\\
&= \psi^{\mathrm{att}}_{c_1}\left(\phi^{-1}\left(p_{c_2}^{\circ (N+n)k_2}(c_2)\right)\right)-N
\\
&= \psi^{\mathrm{att}}_{c_2}\left(p_{c_2}^{\circ (N+n)k_2}(c_2)\right)-N
\\
&= \psi^{\mathrm{att}}_{c_2}(c_2)+n+N-N
\\
&= 0.
\end{align*}

Therefore, we have a germ conjugacy $\eta$ such that the Fatou coordinates of $p_{c_1}$ and $p_{c_2}$ satisfy the following properties
$$
 \psi^{\mathrm{att}}_{c_1} =\widetilde{\psi}^{\mathrm{att}}_{c_2}\circ \eta, \qquad \textrm{and}\ \qquad \psi^{\mathrm{att}}_{c_1}(c_1) =0= \widetilde{\psi}^{\mathrm{att}}_{c_2}(c_2).
$$

Since the parabolic maps $p_{c_1}^{\circ k_1}\vert_{U_{c_1}}$ and $p_{c_2}^{\circ k_2}\vert_{U_{c_2}}$ have a unique critical point of the same degree, they are conformally conjugate (see \cite[Expos{\'e}~IX]{orsay} for the proof of this statement in the case when the common degree is $2$; see \cite[\S 1.5]{Ch} for the general case). One can now carry out the arguments of Lemma~\ref{umbilical_cubic} to conclude that $\eta$ extends to a conformal conjugacy between restrictions of $p_{c_1}^{\circ k_1}$ and $p_{c_2}^{\circ k_2}$ on some neighborhoods of $\overline{U_{c_1}}$ and $\overline{U_{c_2}}$ (respectively). The condition that $c_i$ is a root point of a primitive hyperbolic component or a co-root point implies that $z_i$ has exactly one attracting petal, and hence $\eta$ induces a conformal conjugacy between the polynomial-like restrictions of $p_{c_1}^{\circ k_1}$ and $p_{c_2}^{\circ k_2}$ on some neighborhoods of $\overline{U_{c_1}}$ and $\overline{U_{c_2}}$ (respectively).

We can now invoke \cite[Theorem~1]{I2} to deduce the existence of polynomials $h$, $h_1$ and $h_2$ such that 
\begin{equation}
p_{c_1}^{\circ k_1} \circ h_1 = h_1 \circ h,\ \textrm{and}\ p_{c_2}^{\circ k_2}\circ h_2 = h_2 \circ h.
\label{semi_conj_1}
\end{equation} 
In particular, we have that $\Deg (p_{c_1}^{\circ k_1})=d_1^{k_1}=d_1^{k_2}=\Deg (p_{c_2}^{\circ k_2})$. Hence, $k_1=k_2$.

If both $h_1$ and $h_2$ are of degree one, then we are done. Now suppose that $\Deg (h_i)>1$ for some $i$. Since $p_{c_i}^{\circ k_i}$ has no finite critical orbit, the arguments used in Case 2 of the proof of Theorem~\ref{real_germ} apply mutatis mutandis to show that $\gcd(\Deg (p_{c_i}^{\circ k_i}),\Deg (h_i))>1$
and there exist chains (as in Equation \eqref{eqn:chain}) between $h$ and $p_{c_i}^{\circ k_i}$ ($i=1,2$). By Ritt's decomposition theorem \cite{R},
there exist polynomials $\alpha_i$, $\beta_i$ (of degree at least two) such that up to affine conjugacy,
\begin{equation}
 \label{semiconjugacy}
 p_{c_i}^{\circ k_i}= \alpha_i \circ \beta_i
 \text{ and }
 h = \beta_i \circ \alpha_i.
\end{equation}
Note that each prime factor in the decomposition of $z^d+c$ is either a power map with prime power, or a unicritical polynomial $z^m+c$ where $m$ is a prime divisor of $d$.

Without loss of generality, we are now led to two different cases.

\noindent \textbf{Case 1: \boldmath{$\Deg (h_i)>1\ \textrm{for}\ i=1,2$}.}\quad
In this case, Equation~\eqref{semiconjugacy} provides us with polynomials $\alpha_i$, $\beta_i$, $i=1,2$ (of degree at least two) such that up to affine conjugacy,
\begin{equation}
 \label{semiconjugacy_1}
 p_{c_1}^{\circ k_1}= \alpha_1 \circ \beta_1,\quad
 p_{c_2}^{\circ k_2}= \alpha_2 \circ \beta_2,\quad
 h = \beta_1 \circ \alpha_1=\beta_2\circ \alpha_2,\quad
\end{equation}

By decomposing the above equations to prime factors, 
we have, by taking affine conjugacy, that
\[
 h(z) = z^{l_i} \circ p_{c_i}^{k_1-1} \circ (z^{m_i}+c_i)
\]
with $l_im_i = d_i$. Therefore, it follows that $l_1=l_2$, $m_1=m_2$
and $c_1=c_2$.


\noindent \textbf{Case 2: \boldmath{$\Deg (h_1)=1\ \textrm{and}\ \Deg (h_2)>1$}.}\quad
In this case, Equation~\eqref{semiconjugacy} provides us with polynomials $\alpha$, $\beta$ (of degree at least two) such that up to affine conjugacy,
\begin{equation}
 \label{semiconjugacy_2}
 p_{c_1}^{\circ k_1}= \alpha\circ \beta,\quad
 p_{c_2}^{\circ k_2}= \beta \circ \alpha,\quad
 \end{equation}

Once again, using the fact that each prime factor in the decomposition of $z^d+c$ is either a power map with prime power, or a unicritical polynomial $z^m+c$ where $m$ is a prime divisor of $d$, we conclude that $\alpha$ and $\beta$ must be iterates of $p_{c_1}$, and hence $p_{c_1}^{\circ k_1}=
 p_{c_2}^{\circ k_2}$ up to affine conjugacy. Therefore, $p_{c_1}$ and $p_{c_2}$ are also affinely conjugate.
\end{proof}

Note that the proof of Lemma~\ref{Parabolic_Germs_Determine_Roots} roughly consists of an analytic part and an algebraic part. The analytic part was to promote the conjugacy between parabolic germs to a conformal conjugacy between suitable polynomial-like maps. Thanks to \cite[Theorem~1]{I2}, this gave rise to the semi-conjugacy relations~\eqref{semi_conj_1}. The next step, where we used the work of Ritt and Engstrom to obtain an affine conjugacy between the polynomials $p_{c_1}$ and $p_{c_2}$ was purely algebraic. In fact, the only conditions on $p_{c_1}$ and $p_{c_2}$ that we used in this algebraic step was that they do not have any finite critical orbit. Since $p_{c_i}$ ($i=1,2$) is unicritical, this condition is equivalent to requiring that $p_{c_i}$ is not post-critically finite. This observation leads to the following interesting corollary.

\begin{corollary}[Injectivity of Unicritical Renormalization Operator]\label{injectivity_renorm}
Let $c_1,\ c_2 \in \mathcal{M}_d$ be such that suitable iterates of $p_{c_1}$ and $p_{c_2}$ admit unicritical polynomial-like restrictions (renormalizations) $$\mathcal{R}p_{c_1}:= p_{c_1}^{\circ k_1} : U_1\to V_1,\ \textrm{and}\ \mathcal{R}p_{c_2}:= p_{c_2}^{\circ k_2}: U_2\to V_2.$$ Assume further that $p_{c_i}$ ($i=1,2$) are not post-critically finite. If $\mathcal{R}p_{c_1}$ and $\mathcal{R}p_{c_2}$ are conformally conjugate, then $p_{c_1}$ and $p_{c_2}$ are affinely conjugate.
\end{corollary}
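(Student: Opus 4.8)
The plan is to observe that the hypothesis hands us, for free, exactly the output of the \emph{analytic} half of the proof of Lemma~\ref{Parabolic_Germs_Determine_Roots} — a conformal conjugacy between two polynomial-like maps — so that only its \emph{algebraic} half remains to be carried out, and, as the remark preceding the statement indicates, that part uses nothing about $p_{c_1},p_{c_2}$ beyond the absence of a finite critical orbit.

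Concretely, I would first feed the given conformal conjugacy between $\mathcal{R}p_{c_1}=p_{c_1}^{\circ k_1}:U_1\to V_1$ and $\mathcal{R}p_{c_2}=p_{c_2}^{\circ k_2}:U_2\to V_2$ into \cite[Theorem~1]{I2}, producing polynomials $h,h_1,h_2$ with
$$
p_{c_1}^{\circ k_1}\circ h_1=h_1\circ h,\qquad p_{c_2}^{\circ k_2}\circ h_2=h_2\circ h,
$$
together with a polynomial-like restriction of $h$ that is conformally conjugate to $\mathcal{R}p_{c_1}$ by $h_1$ and to $\mathcal{R}p_{c_2}$ by $h_2$. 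Reading degrees off the two relations gives $\Deg h=\Deg p_{c_1}^{\circ k_1}=d^{k_1}$ and $\Deg h=\Deg p_{c_2}^{\circ k_2}=d^{k_2}$, so $k_1=k_2=:k$. Since $p_{c_i}$ is unicritical and, by hypothesis, not post-critically finite, the $p_{c_i}$-orbit of $0$ is infinite (so in particular $c_i\neq 0$); hence the $p_{c_i}^{\circ k}$-orbit of every critical point of $p_{c_i}^{\circ k}$ is infinite as well, since otherwise $0$ would be pre-periodic for $p_{c_i}$. In particular $p_{c_i}^{\circ k}$ has no finite critical orbit and is neither a power map nor a Chebyshev polynomial — precisely the input the reduction of Ritt and Engstrom requires.

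With this, the case analysis in the proof of Lemma~\ref{Parabolic_Germs_Determine_Roots} (and of Case~2 of Theorem~\ref{real_germ}) applies essentially verbatim. If $\Deg h_1,\Deg h_2>1$, then \cite[Theorem~8]{I2}, Engstrom's theorem \cite{Eng}, and Ritt's decomposition theorem \cite{R} force, after passing to prime factors, $h=z^{\ell_i}\circ p_{c_i}^{\circ(k-1)}\circ(z^{m_i}+c_i)$ with $\ell_im_i=d$ for $i=1,2$; matching the two expressions yields $\ell_1=\ell_2$, $m_1=m_2$, and an affine conjugacy between $p_{c_1}$ and $p_{c_2}$. In every other case one first obtains only that $p_{c_1}^{\circ k}$ and $p_{c_2}^{\circ k}$ are affinely conjugate — directly when $\Deg h_1=\Deg h_2=1$, and via $p_{c_1}^{\circ k}=\alpha\circ\beta$, $p_{c_2}^{\circ k}=\beta\circ\alpha$ with $\alpha,\beta$ forced to be iterates of $p_{c_1}$ when exactly one of $\Deg h_1,\Deg h_2$ equals $1$ — and one must then descend this to an affine conjugacy between $p_{c_1}$ and $p_{c_2}$ themselves.

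That descent is where I expect the only genuinely new, and main (if minor), obstacle to lie. Writing the affine conjugacy in monic centered coordinates as $z\mapsto\alpha z$ (with $\alpha^{d^{k}-1}=1$), the polynomial $q(z)=\alpha^{d-1}z^{d}+\alpha^{-1}c_2$ is a unicritical degree-$d$ polynomial whose $k$-th iterate is $p_{c_1}^{\circ k}$; both $q$ and $p_{c_1}$ commute with the non-special polynomial $p_{c_1}^{\circ k}$, so by the classical description of the centralizer of such a polynomial (Ritt, Julia) both are of the form $\nu\circ L^{\circ m}$ for a common polynomial $L$ and linear maps $\nu$ commuting with $L$. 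Unicriticality of $p_{c_1}$ forces $\mathrm{Crit}(L^{\circ m})=\{0\}$, hence $m=1$ and $L$ of degree $d$; and since $c_1\neq 0$ the only linear map commuting with $p_{c_1}$ is the identity, so $L=p_{c_1}=q$. Thus $\alpha^{d-1}=1$ and $c_2=\alpha c_1$, i.e.\ $c_1$ and $c_2$ differ by a $(d-1)$-st root of unity — the asserted affine conjugacy. (Iterative-root uniqueness genuinely fails without unicriticality — for instance $z^{3}+z$ and $-z^{3}-z$ have the same square — so unicriticality and the non-finiteness of the critical orbit both enter essentially.)
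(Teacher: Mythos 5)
Your proposal is correct and follows essentially the same route as the paper: the corollary is obtained by noting that the hypothesis supplies the output of the analytic half of the proof of Lemma~\ref{Parabolic_Germs_Determine_Roots} (a conformal conjugacy between polynomial-like restrictions), after which one applies \cite[Theorem~1]{I2} and runs the purely algebraic Ritt--Engstrom reduction, which only requires that $p_{c_1}^{\circ k}$ and $p_{c_2}^{\circ k}$ have no finite critical orbit --- exactly the non-post-critically-finite hypothesis. Your closing centralizer argument, descending an affine conjugacy between $p_{c_1}^{\circ k}$ and $p_{c_2}^{\circ k}$ to one between $p_{c_1}$ and $p_{c_2}$, correctly fills in a step the paper leaves implicit, but does not change the overall strategy.
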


Using essentially the same ideas, one can prove a variant of the above result for polynomials of arbitrary degree, provided that the parabolic point has exactly one petal, and its immediate basin of attraction contains exactly one critical point (of possibly higher multiplicity).

\begin{proposition}[Unicritical Basins]
Let $p_1$ and $p_2$ be two polynomials (of any degree) satisfying $p_i(0)=0$, and $p_i(z) = z+z^2+\mathcal{O}(\vert z\vert^3)$ locally near $0$. Let $U_i$ be the immediate basin of attraction of $p_i$ at $0$, and assume that $p_i$ has exactly one critical point of multiplicity $k_i$ in $U_i$. If $p_1$ and $p_2$ are (locally) conformally conjugate in some neighborhoods of $0$, then $k_1=k_2$, and there exist polynomials $h$, $h_1$ and $h_2$ such that $p_1 \circ h_1 = h_1 \circ h$, $p_2 \circ h_2 = h_2 \circ h$. In particular, $\Deg (p_1)=\Deg (p_2)$.
\end{proposition}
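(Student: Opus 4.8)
The plan is to reproduce the analytic half of the proof of Lemma~\ref{Parabolic_Germs_Determine_Roots}, which is exactly the part that never uses that the ambient polynomials are unicritical. Let $\phi$ be the given local conformal conjugacy between $p_1$ and $p_2$ near $0$. Since $p_i(z)=z+z^2+\mathcal{O}(|z|^3)$, the parabolic fixed point $0$ has exactly one attracting and one repelling petal, so the horn map formalism of Section~\ref{cauliflower_recover} applies directly. First I would note that $\phi$ forces the horn map germs of $p_1$ and $p_2$ at $0$ to agree, hence (an extended horn map being the maximal analytic continuation of its germ) the extended horn maps $h^+$ of $p_1$ and $p_2$ coincide, up to pre- and post-composition by rotations (equivalently, up to the normalization of the attracting and repelling Fatou coordinates). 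Now $U_i$ is bounded (its points converge to the finite point $0$), hence simply connected, and $p_i\vert_{U_i}\colon U_i\to U_i$ is a proper holomorphic map which, by hypothesis, has a single critical point of multiplicity $k_i$ in $U_i$; Riemann--Hurwitz (equivalently, a degree-$e$ Blaschke product has $e-1$ critical points in the disk, counted with multiplicity) gives $\Deg(p_i\vert_{U_i})=k_i+1$. By \cite[Proposition~4]{BE}, $h^+$ is a ramified covering with a single critical value, and the ramification index over it equals $k_i$; comparing the two extended horn maps therefore yields $k_1=k_2=:k$.

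Next, since $p_1\vert_{U_1}$ and $p_2\vert_{U_2}$ are parabolic-basin maps each with a unique critical point of the same degree $k+1$, they are conformally conjugate (see \cite[Expos{\'e}~IX]{orsay} for the case of common degree two, and \cite[\S 1.5]{Ch} in general). I would then, exactly as in the proof of Lemma~\ref{Parabolic_Germs_Determine_Roots}, precompose $\phi$ with a suitable iterate of $p_2$ to obtain a germ conjugacy $\eta$ between $p_1$ and $p_2$ at $0$ for which the attracting Fatou coordinates $\psi^{\mathrm{att}}_1$ of $p_1$ and $\psi^{\mathrm{att}}_2$ of $p_2$ can be chosen so that $\psi^{\mathrm{att}}_1=\psi^{\mathrm{att}}_2\circ\eta$ and so that they send the (tails of the) critical orbits inside $U_1$ and $U_2$ to a common point. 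With this normalization in place, the argument of Lemma~\ref{umbilical_cubic} applies: one first lifts $\eta$ through the iterates of $p_1\vert_{U_1}$ and $p_2\vert_{U_2}$ to extend it to a conformal conjugacy between $p_1\vert_{U_1}$ and $p_2\vert_{U_2}$; one then uses local connectivity of $\partial U_i$ (the boundary of a bounded parabolic Fatou component carrying a unicritical proper return map) together with the functional equation $\eta\circ p_1=p_2\circ\eta$ and the uniqueness of analytic continuation to extend $\eta$ across $\partial U_1$ to a full neighborhood of $\overline{U_1}$. Because $0$ has a single attracting petal, this yields a conformal conjugacy between polynomial-like restrictions of $p_1$ and $p_2$ on neighborhoods of $\overline{U_1}$ and $\overline{U_2}$, respectively.

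With the polynomial-like conjugacy in hand, \cite[Theorem~1]{I2} produces non-constant polynomials $h$, $h_1$, $h_2$ with $p_1\circ h_1=h_1\circ h$ and $p_2\circ h_2=h_2\circ h$; comparing degrees in either relation gives $\Deg p_1=\Deg h=\Deg p_2$. Unlike in Lemma~\ref{Parabolic_Germs_Determine_Roots}, one cannot continue on to an affine conjugacy here: the subsequent reduction of Ritt and Engstrom used there relies on $p_{c_i}$ having no finite critical orbit, whereas our $p_1$ and $p_2$ may carry additional critical points (outside $U_i$) with finite orbits, and only the semiconjugacy relations and the equality of degrees survive.

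I expect the main obstacle to be the analytic extension step of the second paragraph, i.e.\ upgrading the germ conjugacy to a polynomial-like conjugacy. Two points demand genuine care beyond a literal invocation of Lemma~\ref{umbilical_cubic}: one must secure the Fatou coordinate normalization so that $\eta$ actually respects the post-critical dynamics inside $U_i$ --- this is the only reason the purely local conjugacy propagates over the whole basin and then across its boundary --- and one must verify that $\partial U_i$ is locally connected and carries no critical point of $p_i$, so that the functional-equation bootstrap across $\partial U_i$ is legitimate. Once these are settled, the horn map comparison and the final degree count are essentially formal.
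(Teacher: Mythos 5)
The paper states this proposition without proof, remarking only that it follows from ``essentially the same ideas'' as Lemma~\ref{Parabolic_Germs_Determine_Roots} and Corollary~\ref{injectivity_renorm}, and your proposal is exactly that intended argument: the shared extended horn map and \cite[Proposition~4]{BE} give $k_1=k_2$, the normalized germ conjugacy is propagated over $U_i$ and across $\partial U_i$ as in Lemma~\ref{umbilical_cubic} to a conjugacy of polynomial-like restrictions, \cite[Theorem~1]{I2} then yields the semiconjugacies and hence $\Deg p_1=\Deg h=\Deg p_2$, and you correctly note that the Ritt--Engstrom reduction (hence any affine-conjugacy conclusion) is unavailable because $p_i$ may have finite critical orbits outside $U_i$. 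The two caveats you flag are the right ones and are worth recording: local connectivity of $\partial U_i$ is supplied by the Roesch--Yin theorem that bounded attracting and parabolic Fatou components of polynomials are Jordan domains, whereas the absence of critical points of $p_i$ on $\partial U_i$ is genuinely not forced by the hypotheses, so the functional-equation bootstrap (here as in the paper's unwritten proof) tacitly requires handling or excluding that degenerate configuration.
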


Let us now proceed to the proof of Theorem~\ref{Parabolic_Germs_Determine_Roots_Co_Roots}. Thanks to Lemma~\ref{Parabolic_Germs_Determine_Roots}, it only remains to show that if $c_1$ and $c_2$ are root points of \emph{satellite} hyperbolic components of period $n_i$ of $\mathcal{M}_{d_i}$ (i.e.,\ if $c_i\in\mathcal{M}_{d_i}^{\textrm{par}}\setminus\mathcal{M}_{d_i}^{\textrm{cusp}}$, $i=1,2$) such that the parabolic germs determined by $p_{c_1}^{\circ n_1}$ and $p_{c_2}^{\circ n_2}$ near their characteristic parabolic points are conformally conjugate, then $p_{c_1}$ and $p_{c_2}$ are affinely conjugate. It is instructive to mention that the principal technical difference between the primitive and satellite cases is that unlike in the primitive situation, the map $p_{c_i}$ has multiple attracting petals at its characteristic parabolic point, and hence $p_{c_i}^{\circ n_i}$ does \emph{not} restrict to a polynomial-like map in a neighborhood of the closure of its characteristic Fatou component. Therefore, in order to implement our general strategy of promoting a parabolic germ conjugacy to a conformal conjugacy between suitable polynomial-like restrictions (of $p_{c_1}^{\circ n_1}$ and $p_{c_2}^{\circ n_2}$), one needs to work with a different (and somewhat more complicated) polynomial-like restriction of $p_{c_i}^{\circ n_i}$.

\begin{proof}[Proof of Theorem~\ref{Parabolic_Germs_Determine_Roots_Co_Roots}]
The number of attracting petals of a parabolic germ is a topological conjugacy invariant. If the parabolic cycles of the polynomials $z^{d_1}+c_1$ and $z^{d_2}+c_2$ have a single attracting petal, then the period of the characteristic parabolic point of $z^{d_i}+c_i$ ($i=1,2$) coincides with the period of the characteristic Fatou component. Hence, we are in the case of Lemma~\ref{Parabolic_Germs_Determine_Roots}, and therefore, $d_1=d_2$, and $p_{c_1} $ and $p_{c_2}$ are affinely conjugate.

Henceforth, we assume that $c_1$ and $c_2$ are roots of some satellite components of $\mathcal{M}_{d_1}$ and $\mathcal{M}_{d_2}$ respectively. Let the period of the parabolic cycle of $p_{c_i}$ be $k_i$ (so $c_i$ sits on the boundary of a hyperbolic component of period $k_i$ and a hyperbolic component of period $n_i$). We denote the characteristic Fatou component of $p_{c_i}$ by $U_{c_i}$. Set $q_i := n_i/k_i$. It is easy to verify that  the Taylor series expansion of $p_{c_i}^{\circ n_i}$ at $z_i$ is given by
$$
p_{c_i}^{\circ n_i}(z)=z+b_i(z-z_i)^{q_i+1}+O\left((z-z_i)^{q_i+2}\right)
$$ 
for some $b_i \in \mathbb{C}^*$. In fact, the number of attracting petals of $p_{c_i}^{\circ n_i}$ at $z_i$ is $q_i$.

If the parabolic germs of $p_{c_i}^{\circ n_i}$ (for $i=1, 2$) are conformally conjugate, then they must have the same number of attracting petals at the characteristic parabolic point $z_i$; i.e.,\ $q_1=q_2$. We set $q:=q_1=q_2>1$. Moreover, these petals are permuted transitively by $p_{c_i}^{\circ k_i}$. Furthermore, by looking at the ramification index of the unique singular value of their common horn maps, we deduce that $d_1=d_2$. We will denote this common degree by $d$.

\begin{figure}[ht!]
\includegraphics[width=0.6\linewidth]{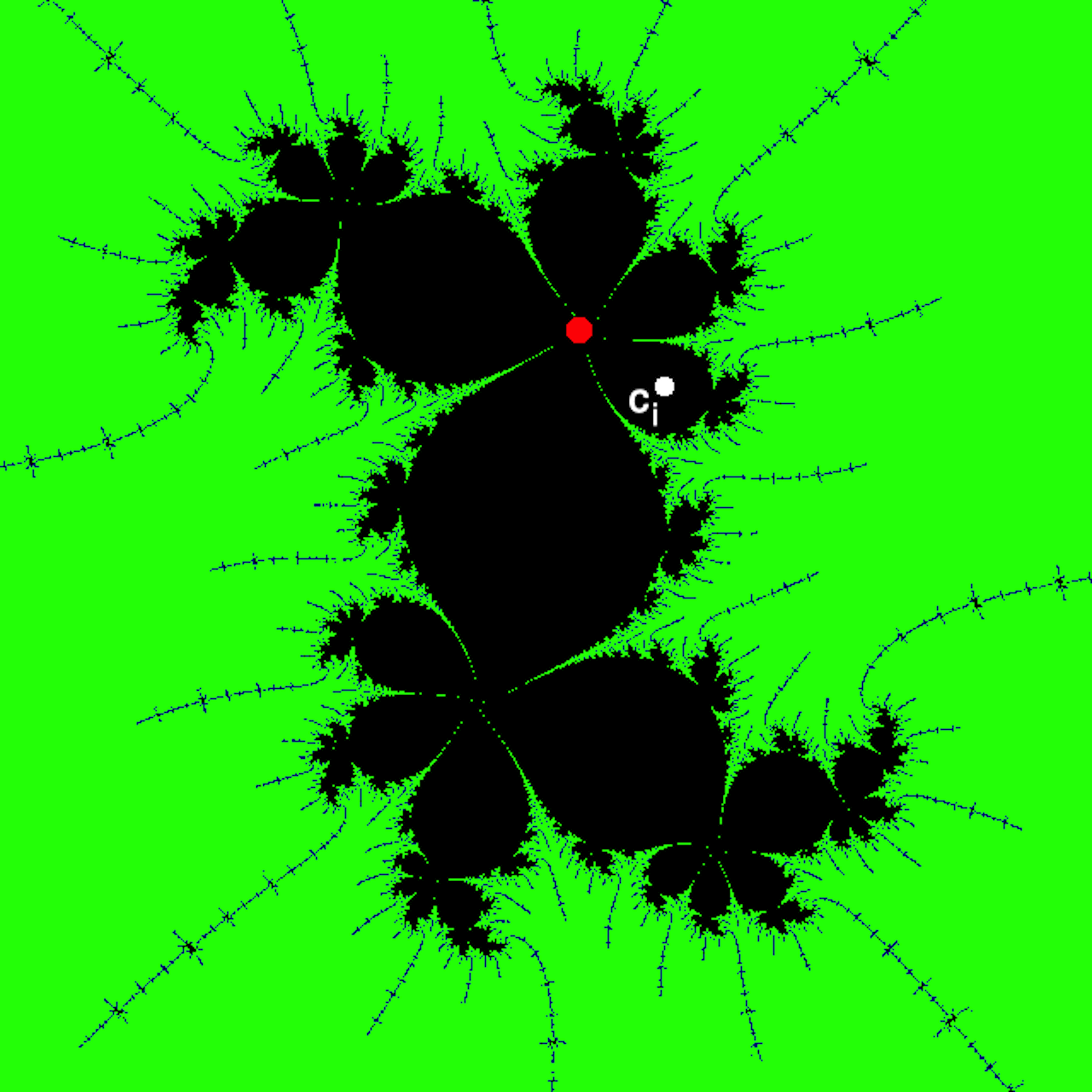}
\caption{The figure shows a part of the dynamical plane of the root $c_i$ of a satellite hyperbolic component (of the Mandelbrot set) with $5$ attracting petals touching at the characteristic parabolic point (marked in red) of $p_{c_i}$. The filled Julia set of the polynomial-like restriction $\tau_i$ of $p_{c_i}^{\circ n_i}$ is also shown.}
\label{poly_like_rabbit_fig}
\end{figure}

As in the proof of Lemma~\ref{Parabolic_Germs_Determine_Roots}, we can post-compose the conformal conjugacy between the germs $p_{c_1}^{\circ n_1}\vert_{N_{z_1}}$ and $p_{c_2}^{\circ n_2}\vert_{N_{z_2}}$ (where $N_{z_i}$ is a sufficiently small neighborhood of $z_i$) with a suitable iterate of $p_{c_2}^{\circ k_2}\vert_{N_{z_2}}$ to require that the germ conjugacy sends $p_{c_1}^{\circ rn_1}(c_1)$ to $p_{c_2}^{\circ rn_2}(c_2)$ for $r$ large enough (compare \cite[Lemma~4.2]{LoMu}). One can now carry out the arguments of Lemma~\ref{umbilical_cubic} to conclude that $\eta$ extends to a conformal conjugacy between restrictions of $p_{c_1}^{\circ n_1}$ and $p_{c_2}^{\circ n_2}$ on some neighborhoods of $\overline{U_{c_1}}$ and $\overline{U_{c_2}}$ (respectively).

Note that the restriction of the polynomial $p_{c_i}^{\circ n_i}$ to a neighborhood of $\overline{U_{c_i}}$ is not polynomial-like. However, $p_{c_i}^{\circ k_i}$, and hence $p_{c_i}^{\circ n_i}$, admits a polynomial-like restriction $\tau_i$ (such that the domain of the polynomial-like restriction contains all the periodic Fatou components touching at the characteristic parabolic point of $p_{c_i}$) that is hybrid equivalent to some map $p_{c_i'}$ with a fixed point of multiplier $e^{\frac{2\pi ir_i}{q}}$ (see Figure~\ref{poly_like_rabbit_fig}). The conclusion of the previous paragraph, combined with the arguments of \cite[Lemma~4.1]{LoMu}, implies that these polynomial-like maps $\tau_1$ and $\tau_2$ are conformally conjugate. It now follows from Corollary~\ref{injectivity_renorm} that $p_{c_1}$ and $p_{c_2}$ are affinely conjugate. 
\end{proof}

The fundamental factor that makes the above proofs work is unicriticality since one can read off the conformal position of the unique critical value from the extended horn map. The next best family of polynomials, where this philosophy can be applied, is $\{ f_c^{\circ 2}\}_{c\in \mathbb{C}}$. The proof of rigidity of parabolic parameters of $\mathcal{M}_d^*$ comes in two different flavors. The fact that the even period parabolic parameters of $\mathcal{M}_d^*$ are completely determined (up to affine conjugacy) by their parabolic germs follows by an argument similar to the one employed in the proof of Theorem~\ref{Parabolic_Germs_Determine_Roots_Co_Roots}. However, the case of odd period non-cusp parabolic parameters is slightly more tricky since for such a parabolic parameter $c$ (of parabolic orbit period $k$), the characteristic parabolic germs of $f_c^{\circ 2k}$ and $f_{c^\ast}^{\circ 2k}$ are always conformally conjugate by the local biholomorphism $\iota\circ f_c^{\circ k}$.

\begin{proof}[Proof of Theorem~\ref{recovering_Anti-polynomials}]
Let $U_{c_i}$ be the characteristic Fatou component of $f_{c_i}$. Note that by \cite[Proposition~4]{BE}, if $c_1\in\Omega_{d_1}^{\mathrm{even}}$, then the corresponding (upper) extended horn map(s) has (have) exactly one singular value. On the other hand, if $c_1\in \Omega_{d_1}^{\mathrm{odd}}$, then the corresponding (upper) extended horn map(s) has (have) exactly two distinct singular values. Since the parabolic germs of $f_{c_1}^{\circ 2n_1}$ and $f_{c_2}^{\circ 2n_2}$ are conformally conjugate, they have common (upper) extended horn map(s). By looking at the number of singular values of the common extended horn map(s), and their ramification indices, we conclude that

i) $d_1=d_2=d$ (say), and
 
ii) either both the $c_i$ are in $\Omega_{d}^{\mathrm{odd}}$, or both the $c_i$ are in $\Omega_{d}^{\mathrm{even}}$.

If both $c_i$ are in $\Omega_{d}^{\mathrm{even}}$, then the first holomorphic return maps of $U_{c_i}$ are conformally conjugate (in fact, they are conjugate to the same Blaschke product on $\mathbb{D}$). Therefore arguments similar to the ones employed in the proofs of Lemma~\ref{Parabolic_Germs_Determine_Roots} and Theorem~\ref{Parabolic_Germs_Determine_Roots_Co_Roots} show that suitable polynomial-like restrictions of $f_{c_1}^{\circ 2n_1}$ and $f_{c_2}^{\circ 2n_2}$ are conformally conjugate. We can now invoke \cite[Theorem~1]{I2} to deduce the existence of polynomials $h$, $h_1$ and $h_2$ such that $f_{c_1}^{\circ 2n_1} \circ h_1 = h_1 \circ h$, $f_{c_2}^{\circ 2n_2}\circ h_2 = h_2 \circ h$. In particular, we have that $d^{2n_1}=\Deg (f_{c_1}^{\circ 2n_1})=\Deg (f_{c_2}^{\circ 2n_2})=d^{2n_2}$. Hence, $n_1=n_2$. Finally, applying Ritt and Engstrom's reduction steps (similar to the proof of Lemma~\ref{Parabolic_Germs_Determine_Roots}), we can conclude from the semi-conjugacy relations that $f_{c_1}$ and $f_{c_2}$ are affinely conjugate. Hence, $c_2=c_1$ in $\Omega_d^{\mathrm{even}}/\mathord\sim$.

The case when both $c_i$ are in $\Omega_{d}^{\mathrm{odd}}$ is more delicate because the conformal conjugacy class of $f_{c_i}^{2n_1}\vert_{U_{c_i}}$ depends on the critical {\'E}calle height of $f_{c_i}$. We assume that $g_1:=f_{c_1}^{\circ 2n_1}\vert_{N_{z_1}}$ and $g_2:=f_{c_2}^{\circ 2n_2}\vert_{N_{z_2}}$ are conformally conjugate by some local biholomorphism $\phi:N_{z_1}\to N_{z_2}$ (where $N_{z_i}$ is a sufficiently small neighborhood of $z_i$). Then these two germs have the same horn map germ at $0$, and hence $f_{c_1}^{\circ 2n_1}$ and $f_{c_2}^{\circ 2n_2}$ have the same extended horn map $h^+$ at $0$ (recall that the domain of $h^+$ is its maximal domain of analyticity; i.e.,\ $h^+$ is completely determined by the germ of the horn map at $0$). Let $\psi^{\mathrm{att}}_{c_2}$ be an extended attracting Fatou coordinate for $f_{c_2}^{\circ 2n_2}$ at $z_2$, normalized so that the attracting equator maps to the real line, and $\psi^{\mathrm{att}}_{c_2}(c_2)=it$, for some $t\in\R$. Then, there exists an extended attracting Fatou coordinate $\psi^{\mathrm{att}}_{c_1}$ for $f_{c_1}^{\circ 2n_1}$ at $z_1$ such that $\psi^{\mathrm{att}}_{c_1}=\psi^{\mathrm{att}}_{c_2}\circ \phi$ in their common domain of definition. By \cite[Proposition~4]{BE}, $h^+$ is a ramified covering with exactly two critical values. This implies that 
$$
\left\{ \Pi(\psi^{\mathrm{att}}_{c_1}(c_1)), \Pi(\psi^{\mathrm{att}}_{c_1}(f_{c_1}^{\circ n_1}(c_1)))\right\} =\left\{\Pi(\psi^{\mathrm{att}}_{c_2}(c_2)), \Pi(\psi^{\mathrm{att}}_{c_2}(f_{c_2}^{\circ n_2}(c_2)))\right\}.
$$  

\noindent We now consider two cases.
\bigskip

\noindent \textbf{Case 1: $\Pi(\psi^{\mathrm{att}}_{c_1}(c_1))= \Pi(\psi^{\mathrm{att}}_{c_2}(c_2))$.} \quad
We can assume, possibly after modifying the conformal conjugacy $\phi$ (as in the proof of Lemma~\ref{Parabolic_Germs_Determine_Roots}) that
\begin{align*}
\psi^{\mathrm{att}}_{c_1} &= \psi^{\mathrm{att}}_{c_2}\circ \phi,\ \psi^{\mathrm{att}}_{c_1}(c_1)=it=\psi^{\mathrm{att}}_{c_2}(c_2).
\end{align*}

\noindent Since $\psi^{\mathrm{att}}_{c_2}$ maps the attracting equator (at $z_2$) to the real line, it conjugates $f_{c_2}^{\circ n_2}$ to the map $\zeta\mapsto\overline{\zeta}+1/2$. Hence, $\psi^{\mathrm{att}}_{c_2}(f_{c_2}^{\circ n_2}(c_2))=1/2-it$. On the other hand, $\psi^{\mathrm{att}}_{c_1}$ conjugates $f_{c_1}^{\circ 2n_1}$ to the translation $\zeta\mapsto\zeta+1$, and hence must conjugate $f_{c_1}^{\circ n_1}$ to a map of the form $\zeta\mapsto\overline{\zeta}+1/2+i\beta$, for some $\beta\in\R$ (compare \cite[Lemma~2.3]{HS}). Thus, we have that $\psi^{\mathrm{att}}_{c_1}(f_{c_1}^{\circ n_1}(c_1))=1/2-it+i\beta$. However, by our assumption, $\Pi(\psi^{\mathrm{att}}_{c_1}(f_{c_1}^{\circ n_1}(c_1)))=\Pi(\psi^{\mathrm{att}}_{c_2}(f_{c_2}^{\circ n_2}(c_2)))$, and hence, $\beta=0$. This shows that
$$
\psi^{\mathrm{att}}_{c_1}(f_{c_1}^{\circ n_1}(c_1))=1/2-it=\psi^{\mathrm{att}}_{c_2}(f_{c_2}^{\circ n_2}(c_2)).
$$

In particular, $f_{c_1}$ and $f_{c_2}$ have equal critical {\'E}calle height $t$, and hence $f_{c_1}^{\circ 2n_1}\vert_{U_{c_1}}$ and $f_{c_2}^{\circ 2n_2}\vert_{U_{c_2}}$ are conformally conjugate. Moreover, since the germ conjugacy $\phi$ respects both the infinite critical orbits of $f_{c_i}^{\circ 2n_i}\vert_{U_{c_i}}$, we can argue as in Lemma~\ref{umbilical_cubic} to see that $\phi$ extends to a conformal conjugacy between $f_{c_1}^{\circ 2n_1}$ and $f_{c_2}^{\circ 2n_2}$ restricted to some neighborhoods of $\overline{U_{c_i}}$. Since $c_i$ is a non-cusp parameter, $z_i$ has exactly one attracting petal, and hence $\phi$ induces a conformal conjugacy between the polynomial-like restrictions of $f_{c_1}^{\circ 2n_1}$ and $f_{c_2}^{\circ 2n_2}$ in some neighborhoods of $\overline{U_{c_1}}$ and $\overline{U_{c_2}}$ respectively. As in the even period case, we can now appeal to \cite[Theorem~1]{I2} and apply Ritt and Engstrom's reduction steps (similar to the proof of Lemma~\ref{Parabolic_Germs_Determine_Roots}) to conclude that $f_{c_1}$ and $f_{c_2}$ are affinely conjugate.

\noindent \textbf{Case 2: $\Pi(\psi^{\mathrm{att}}_{c_1}(c_1))= \Pi(\psi^{\mathrm{att}}_{c_2}(f_{c_2}^{\circ n_2}(c_2)))$.} \quad
Since $\phi$ is a conformal conjugacy between $f_{c_1}^{\circ 2n_1}\vert_{N_{z_1}}$ and $f_{c_2}^{\circ 2n_2}\vert_{N_{z_2}}$, the map $\widetilde{\phi}:=\iota\circ f_{c_2}^{\circ n_2}\circ\phi$ is a conformal conjugacy between the characteristic parabolic germs of $f_{c_1}^{\circ 2n_1}$ and $f_{c_2^*}^{\circ 2n_2}$. Let $\psi^{\mathrm{att}}_{c_2^*}$ be an extended attracting Fatou coordinate for $f_{c_2^*}$ at $z_2^*$ such that $\psi^{\mathrm{att}}_{c_2^*}\circ \iota\circ f_{c_2}^{\circ n_2}=\psi^{\mathrm{att}}_{c_2}$ in their common domain of definition. Therefore, 
\begin{align*}
\psi^{\mathrm{att}}_{c_1}
&=\psi^{\mathrm{att}}_{c_2}\circ \phi
\\
&=\psi^{\mathrm{att}}_{c_2^*}\circ \iota\circ f_{c_2}^{\circ n_2}\circ \phi
\\
&= \psi^{\mathrm{att}}_{c_2^*}\circ \widetilde{\phi}
\end{align*}
in their common domain of definition. 

Moreover, a simple computation shows that 
\begin{align*}
\Pi(\psi^{\mathrm{att}}_{c_1}(c_1))
&= \Pi(\psi^{\mathrm{att}}_{c_2^*}(c_2^*)).
\end{align*}
The situation now reduces to that of Case 1, and a similar argumentation shows that $f_{c_1}$ and $f_{c_2^*}$ are affinely conjugate.

Combining Case 1 and Case 2, we conclude that $c_2\in \lbrace c_1, c_1^*\rbrace$ in $\Omega_d^{\mathrm{odd}}/\mathord\sim$.
\end{proof}

\section{Polynomials with Real-Symmetric Parabolic Germs}\label{real-symmetric_parabolic_germs}

In this section, we will discuss another local-global principle for parabolic germs that are obtained by restricting a polynomial map of the plane near a parabolic fixed/periodic point. Recall that a parabolic germ $g$ at $0$ is said to be \emph{real-symmetric} if in some conformal coordinates, $\overline{g(\overline{z})}=g(z)$; i.e.,\ if all the coefficients in its power series expansion are real after a local conformal change of coordinates. This is a strong local condition, and we believe that in general, a polynomial parabolic germ can be real-symmetric only if the polynomial itself has a global anti-holomorphic involutive symmetry. 

By \cite[Corollary~4.8]{IM4}, if $f_c(z)=\overline{z}^d+c$ has a simple (exactly one attracting petal) parabolic orbit of odd period, and if the critical {\'E}calle height is $0$, then the corresponding parabolic germ is real-symmetric if and only if $f_c$ commutes with a global anti-holomorphic involution. In this section, we generalize this result, and also prove the corresponding theorem for unicritical holomorphic polynomials.

We will make use of our discussion on extended horn maps in Section~\ref{cauliflower_recover}. The following characterization of real-symmetric parabolic germs, and the symmetry of its upper and lower horn maps will be useful for us. The result is classical \cite[\S 2.8.4]{FL}.

\begin{lemma}\label{real_germs}
For a simple parabolic germ $g$, the following are equivalent:
\begin{itemize}
\item $g$ is a real-symmetric germ,

\item there is a $g$-invariant real-analytic curve $\Gamma$ passing through the parabolic fixed point of $g$,

\item there is an anti-holomorphic involution $\widetilde{\iota}$ defined in a neighborhood of the parabolic fixed point (and fixing it) of $g$ such that $g$ commutes with $\widetilde{\iota}$.

If any of these equivalent conditions are satisfied, one can choose attracting and repelling Fatou coordinates for $g$ such that the involution $w\mapsto 1/\overline{w}$ is a conjugacy between the upper and lower horn map germs $h^+$ and $h^-$; i.e.,\ $1/\overline{h^-\left(1/\overline{w}\right)}=h^+(w)$ for $w$ near $0$.
\end{itemize} 
\end{lemma}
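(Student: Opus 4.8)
The plan is to prove the cycle of implications $(1)\Rightarrow(3)\Rightarrow(2)\Rightarrow(1)$ and then to read off the horn map symmetry from $(3)$; throughout, normalize the parabolic point of $g$ to be at the origin. For $(1)\Rightarrow(3)$: if $\alpha$ is a conformal germ fixing $0$ with $\alpha\circ g\circ\alpha^{-1}$ commuting with complex conjugation $\iota$, then $g$ commutes with the anti-holomorphic involution $\widetilde\iota:=\alpha^{-1}\circ\iota\circ\alpha$, which fixes $0$. For $(3)\Rightarrow(2)$: put $\Gamma:=\mathrm{Fix}(\widetilde\iota)$; since an anti-holomorphic involution fixing $0$ is holomorphically conjugate to $\iota$ (a classical normal form), $\Gamma$ is a regular real-analytic arc through $0$, and the commutation $\widetilde\iota\circ g=g\circ\widetilde\iota$ gives $g(\Gamma)\subseteq\Gamma$ near $0$ (if $\widetilde\iota(p)=p$ then $\widetilde\iota(g(p))=g(\widetilde\iota(p))=g(p)$). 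For $(2)\Rightarrow(1)$: a regular real-analytic arc through $0$ admits a real-analytic regular parametrization, which complexifies to a local biholomorphism; its inverse $\alpha$ carries $\Gamma$ to an interval $I\subset\mathbb{R}$ with $\alpha(0)=0$, and then $\widetilde g:=\alpha\circ g\circ\alpha^{-1}$ maps $I$ into $\mathbb{R}$ (because $\Gamma$ is $g$-invariant), so by the reflection principle $\widetilde g$ has real Taylor coefficients; hence $g$ is real-symmetric.

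For the final assertion, use $(3)$ together with the above normal form to conjugate so that $g$ is a real germ; after a real rescaling, $g(z)=z+z^2+O(z^3)$ with real coefficients, $g$ commutes with $\iota$, and the attracting and repelling axes at $0$ lie along $\mathbb{R}$. Choose the attracting and repelling petals $\mathcal{P}^{\mathrm{att}}$, $\mathcal{P}^{\mathrm{rep}}$ to be $\iota$-invariant. The attracting Fatou coordinate is unique up to an additive constant, and $\iota\circ\psi^{\mathrm{att}}\circ\iota$ is again an attracting Fatou coordinate, so it equals $\psi^{\mathrm{att}}+c$ for some $c\in\mathbb{C}$; conjugating this relation once more by $\iota$ forces $c+\overline c=0$, so replacing $\psi^{\mathrm{att}}$ by $\psi^{\mathrm{att}}+c/2$ we may assume $\overline{\psi^{\mathrm{att}}(\overline z)}=\psi^{\mathrm{att}}(z)$, and likewise arrange $\overline{\psi^{\mathrm{rep}}(\overline z)}=\psi^{\mathrm{rep}}(z)$. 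With these symmetric normalizations both Fatou coordinates send the upper half-plane to the lower half-plane under $\iota$, so $\iota$ interchanges the two components $\mathcal{U}^{+}$, $\mathcal{U}^{-}$ of $\mathcal{P}^{\mathrm{att}}\cap\mathcal{P}^{\mathrm{rep}}$, hence interchanges the sepals $\mathcal{S}^{\pm}$ (which commute with $g$). Therefore $V^{-}=\overline{V^{+}}$ and $W^{-}=\overline{W^{+}}$, and using $\psi^{\mathrm{att}}\circ\iota=\iota\circ\psi^{\mathrm{att}}$ together with $(\psi^{\mathrm{rep}})^{-1}\circ\iota=\iota\circ(\psi^{\mathrm{rep}})^{-1}$ one computes $\overline{H^{-}(\overline\zeta)}=H^{+}(\zeta)$ on $V^{+}$; that is, $H^{+}=\iota\circ H^{-}\circ\iota$.

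Projecting by $\Pi(\zeta)=\exp(2\pi i\zeta)$ and using $\Pi\circ\iota=\sigma\circ\Pi$ with $\sigma(w)=1/\overline w$, the identity $H^{+}=\iota\circ H^{-}\circ\iota$ descends to $h^{+}=\sigma\circ h^{-}\circ\sigma$, i.e.\ $1/\overline{h^{-}(1/\overline w)}=h^{+}(w)$ for $w$ near $0$, which is the claim. The argument is classical and contains no deep obstacle; the points requiring care are the verification that the additive ambiguity of the Fatou coordinate is purely imaginary (so that halving it produces a genuinely $\iota$-symmetric coordinate), and the orientation statement that $\iota$ swaps $\mathcal{U}^{+}$ with $\mathcal{U}^{-}$ rather than fixing each — this orientation reversal is precisely what forces the upper and lower horn maps to be exchanged. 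The sole external input is the normal form for anti-holomorphic involutions near a fixed point, whose fixed set is a real-analytic arc.
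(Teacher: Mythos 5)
Your proof is correct. Note that the paper itself offers no proof of this lemma: it is stated with the citation ``classical, \cite[\S 2.8.4]{FL}'', so there is no in-paper argument to compare against; your write-up is essentially the standard derivation that the citation stands in for. The cycle $(1)\Rightarrow(3)\Rightarrow(2)\Rightarrow(1)$ is the natural one, and the two delicate points you flag are indeed the right ones and are handled correctly: the additive ambiguity $c$ of the $\iota$-symmetrized Fatou coordinate satisfies $c+\overline{c}=0$ (apply $\iota\circ(\cdot)\circ\iota$ to the relation $\iota\circ\psi^{\mathrm{att}}\circ\iota=\psi^{\mathrm{att}}+c$), so shifting by $c/2$ kills it; and since $\psi^{\mathrm{att}}\circ\iota=\iota\circ\psi^{\mathrm{att}}$ after this normalization, $\iota$ necessarily interchanges $\mathcal{U}^{+}$ with $\mathcal{U}^{-}$ and hence the sepals, which is exactly what exchanges $h^{+}$ and $h^{-}$ rather than fixing each. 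Two small remarks. First, in $(2)\Rightarrow(1)$ you tacitly read ``real-analytic curve through the parabolic point'' as a \emph{regular} arc; this is the intended meaning (and is what the other implications produce, since $\mathrm{Fix}(\widetilde{\iota})$ is regular by the normal form), but it is worth saying explicitly, as the complexification argument fails at a singular point of $\Gamma$. Second, your ``sole external input'', the linearization of an anti-holomorphic involution germ, can itself be made self-contained in one line: after a rotation making $\widetilde{\iota}'$ tangent to $\iota$, the average $\phi(z)=\tfrac12\bigl(z+\overline{\widetilde{\iota}(z)}\bigr)$ is a local biholomorphism satisfying $\phi\circ\widetilde{\iota}=\iota\circ\phi$.
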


In fact, the statements about the horn map germs $h^{\pm}$ (near $0$ and $\infty$ respectively) can be made somewhat more global. 

\begin{lemma}[Extended Horn Maps for Real-symmetric Germs]\label{extended_horns_symmetric}
Let $p$ be a polynomial with a simple parabolic fixed point $z_0$ such that the parabolic germ of $p$ at $z_0$ is real-symmetric. If we normalize the attracting and repelling Fatou coordinates of $p$ at $z_0$ such that they map the real-analytic curve $\Gamma$ to the real line, then the following is true for the corresponding horn maps: $\mathcal{D}_0^-$ is the image of $\mathcal{D}_0^+$ under $w\mapsto 1/\overline{w}$, and $1/\overline{h^-\left(1/\overline{w}\right)}=h^+(w)$ for all $w\in \mathcal{D}_0^+$.
\end{lemma}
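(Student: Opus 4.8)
The plan is to leverage Lemma~\ref{real_germs}, which already gives the symmetry relation $1/\overline{h^-(1/\overline{w})}=h^+(w)$ at the level of germs near $0$ and $\infty$, and then use the \emph{maximality} of the extended horn maps' domains (established earlier in this section) together with the identity principle to promote this germ relation to a global statement. More precisely, first fix the normalization: since the germ is real-symmetric, Lemma~\ref{real_germs} supplies a $p$-invariant real-analytic curve $\Gamma$ through $z_0$, and we normalize $\psi^{\mathrm{att}}$ and $\psi^{\mathrm{rep}}$ so that each maps $\Gamma$ (restricted to the appropriate petal) to $\mathbb{R}$. Equivalently, the anti-holomorphic involution $\widetilde{\iota}$ fixing $z_0$ and commuting with $p$ is conjugated by these Fatou coordinates to $\zeta\mapsto\overline{\zeta}$, hence the associated map on $w$-coordinates is $w\mapsto 1/\overline{w}$.

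Next I would define the candidate extension. Consider the map $w\mapsto 1/\overline{h^-(1/\overline{w})}$. Because $h^-$ is holomorphic on $\mathcal{D}_0^-$ and the two conjugations are anti-holomorphic, this composite is holomorphic on the set $\{w: 1/\overline{w}\in\mathcal{D}_0^-\}$, which is precisely the image of $\mathcal{D}_0^-$ under $w\mapsto 1/\overline{w}$. By Lemma~\ref{real_germs}, this composite agrees with $h^+$ on a punctured neighborhood of $0$. The key point is that $\mathcal{D}_0^+\cup\{0\}$ is the \emph{maximal domain of analyticity} of $h^+$ (the argument given earlier in the excerpt, using pre-parabolic points accumulating on $\partial U_0$ and the fact that $h^+$ is asymptotically a rotation at $0$). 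Therefore the holomorphic function $w\mapsto 1/\overline{h^-(1/\overline{w})}$, being an analytic continuation of $h^+$, must have its domain contained in $\mathcal{D}_0^+\cup\{0\}$; conversely, running the same argument with the roles of $h^+$ and $h^-$ swapped (using maximality of $\mathcal{D}_0^-\cup\{\infty\}$ as the domain of $h^-$) shows the reverse inclusion. Hence the image of $\mathcal{D}_0^-$ under $w\mapsto 1/\overline{w}$ equals $\mathcal{D}_0^+$ — equivalently $\mathcal{D}_0^-$ is the image of $\mathcal{D}_0^+$ under $w\mapsto 1/\overline{w}$ — and the identity $1/\overline{h^-(1/\overline{w})}=h^+(w)$ holds throughout $\mathcal{D}_0^+$.

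The cleanest way to organize the domain-equality step is to work upstairs with the extended lifted horn maps: $H^+=\psi^{\mathrm{att}}\circ\zeta^{\mathrm{rep}}$ on $D_0^+$ and $H^-$ on $D_0^-$. With the normalization above, $\widetilde{\iota}$ conjugates $\zeta^{\mathrm{rep}}$ and $\psi^{\mathrm{att}}$ equivariantly to $\zeta\mapsto\overline{\zeta}$, so that $\overline{H^-(\overline{\zeta})}=H^+(\zeta)$ wherever both sides make sense, and $D_0^+$ and $D_0^-$ are exchanged by complex conjugation; projecting via $\Pi(\zeta)=\exp(2\pi i\zeta)$ turns $\zeta\mapsto\overline{\zeta}$ into $w\mapsto 1/\overline{w}$ and yields the stated formula directly. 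I expect the main obstacle to be bookkeeping the normalizations carefully: one must check that the \emph{extended} Fatou coordinate $\psi^{\mathrm{att}}$ on the whole immediate basin $U_0$ (not merely on a petal) can be chosen to intertwine $\widetilde{\iota}|_{U_0}$ with $\zeta\mapsto\overline{\zeta}$ — this uses that $\widetilde{\iota}$, initially defined only near $z_0$, extends to an anti-holomorphic involution of $U_0$ by analytic continuation along the dynamics (since it commutes with $p$), and that $U_0$ is $\widetilde{\iota}$-invariant because $\Gamma$ meets $U_0$. Once this global equivariance of $\psi^{\mathrm{att}}$ and of $\zeta^{\mathrm{rep}}$ is in hand, the identity-principle/maximality argument closes everything with no further computation.
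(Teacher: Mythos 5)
Your proposal is correct and follows essentially the same route as the paper, whose proof is precisely the one-line appeal to Lemma~\ref{real_germs} together with the identity principle and the maximality of the extended horn maps' domains of analyticity; you have simply spelled out the details (the equivariant normalization of the Fatou coordinates and the two-sided domain comparison) that the paper leaves implicit.
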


\begin{proof}
This follows from Lemma~\ref{real_germs}, and the identity principle for holomorphic maps (since the extended horn maps are the maximal analytic continuations of the horn map germs).
\end{proof}

\begin{definition}
We say that $p_c$ (respectively $f_c$) is a \emph{real} polynomial (respectively anti-polynomial) if $p_c$ (respectively $f_c$) commutes with an anti-holomorphic involution of the plane.
\end{definition}
 
We now prove Theorem~\ref{Real_Germs_Real_Parameters}, which is another local-global principle for unicritical holomorphic polynomials with parabolic cycles. Recall that $\mathcal{M}_d^{\mathrm{par}}$ is the set of all parabolic parameters of $\mathcal{M}_d$. For $c\in \mathcal{M}_d^{\mathrm{par}}$, let $z_c$ be the characteristic parabolic point, and $U_c$ be the characteristic Fatou component (of period $n$) of $p_{c}(z)=z^d+c$. 

\begin{proof}[Proof of Theorem~\ref{Real_Germs_Real_Parameters}]
We assume that the parabolic germ of $g:=p_c^{\circ n}$ at $z_c$ is real-symmetric. Let $\alpha$ be a local conformal conjugacy between $g$, and a real germ $h$ fixing $0$. Observe that $\iota : z \mapsto z^*$ is an anti-holomorphic conjugacy between $p_c$ and $p_{c^*}$. It is easy to check that the germ $\iota \circ g\circ \iota= p_{c^*}^{\circ n}$ at $z_c^*$ is also real-symmetric, and the local biholomorphism $\iota \circ \alpha \circ \iota$ conjugates the parabolic germ $\iota \circ g\circ \iota$ at $z_c^*$ to the same real parabolic germ $h$ as obtained above. Thus, the parabolic germs $g$ at $z_c$, and $\iota\circ g\circ\iota$ at $z_c^*$ are conformally conjugate by $\eta :=\left(\iota \circ \alpha \circ \iota\right)^{-1}\circ\alpha$. Therefore, by Theorem~\ref{Parabolic_Germs_Determine_Roots_Co_Roots}, the maps $p_c$ and $p_{c^*}$ are affinely conjugate. A straightforward computation now shows that $c^*=\omega^j c$ where $\omega=\exp(\frac{2\pi i}{d-1})$, and $j \in \mathbb{N}$. But this precisely means that $p_c$ commutes with the global anti-holomorphic involution $\zeta \mapsto \omega^{-j} \zeta^*$.
\end{proof}

Finally, let us record the analogue of Theorem~\ref{Real_Germs_Real_Parameters} in the unicritical anti-holomorphic family. The following theorem also sharpens \cite[Corollary~4.8]{IM4}. We continue with the terminologies introduced in the previous section.

\begin{proof}[Proof of Theorem~\ref{Real_Germs_Real_Parameters_Anti}]
The case when $c\in \Omega_{d_i}^{\mathrm{even}}$ is similar to the holomorphic case (Theorem~\ref{Real_Germs_Real_Parameters}). By a completely similar argument using Theorem~\ref{recovering_Anti-polynomials}, we can conclude that $c^*=\omega^j c$ for some $j\in \lbrace 0, 1, \cdots, d\rbrace$, where $\omega=\exp(\frac{2\pi i}{d+1})$. But this is equivalent to saying that $f_c$ commutes with the global anti-holomorphic involution $\zeta \mapsto \omega^{-j} \zeta^*$.

Now we focus on the case $c\in \Omega_{d_i}^{\mathrm{odd}}$. Note that in this case, the invariant real-analytic curve $\Gamma$ passing through $z_c$ (compare  Lemma~\ref{real_germs}) is simply the union of the attracting equator at $z_c$, the parabolic point $z_c$, and the repelling equator at $z_c$. By \cite[Lemma~2.3]{HS}, we can choose an attracting Fatou coordinate $\psi^{\mathrm{att}}_{c}$ for the first return map $f_{c}^{\circ n}$ on the attracting petal $\mathcal{P}^{\textrm{att}}_c\subset U_c$ such that $$\re(\psi^{\mathrm{att}}_{c}(c))=0,\ \textrm{and}\ \psi^{\mathrm{att}}_{c}(f_c^{\circ n}(z))=\overline{\psi^{\mathrm{att}}_{c}(z)}+1/2,$$ for $z\in\mathcal{P}^{\textrm{att}}_c$. We then have $$\psi^{\mathrm{att}}_{c}(c)=it,$$ where $t\in\R$ is the critical {\'E}calle height of $f_c$. Since $\psi^{\mathrm{att}}_{c}$ conjugates $f_c^{\circ n}$ to $\zeta\mapsto\overline{\zeta}+1/2$, it follows that $$\psi^{\mathrm{att}}_{c}(f_{c}^{\circ n}(c))=1/2-it.$$

By construction of $\psi^{\mathrm{att}}_{c}$, it maps the attracting equator to the real line. Moreover, we can choose a repelling Fatou coordinate at $z_c$ such that it maps the repelling equator to the real line (once again by \cite[Lemma~2.3]{HS}). With such choice of Fatou coordinates at $z_c$, the extended upper and lower horn maps of $f_{c}^{\circ 2n}$ at $z_c$ are conjugated by $w\mapsto 1/\overline{w}$ (by Lemma~\ref{extended_horns_symmetric}). In particular, we have
\begin{align*}
\{ \Pi(\psi^{\mathrm{att}}_{c}(c)), \Pi(\psi^{\mathrm{att}}_{c}(f_{c}^{\circ n}(c)))\} &=\{1/\overline{\Pi(\psi^{\mathrm{att}}_{c}(c)}), 1/\overline{\Pi(\psi^{\mathrm{att}}_{c}(f_{c}^{\circ n}(c)))}\}.
\end{align*}  

Now a simple computation using the relations $$\psi^{\mathrm{att}}_{c}(c)=it,\ \textrm{and}\ \psi^{\mathrm{att}}_{c}(f_{c}^{\circ n}(c))=\frac{1}{2}-it$$ shows that we must have $\Pi(\psi^{\mathrm{att}}_{c}(c))=1/\overline{\Pi(\psi^{\mathrm{att}}_{c}(c)})$, and hence $t=0$. Therefore, $c$ is a critical {\'E}calle height $0$ parameter. 

Now as in the even period case, there exists a local conformal conjugacy $\alpha$ conjugating the germ of $f_c^{\circ 2n}$ at $z_c$ to a real germ $h$ fixing $0$. Therefore, the local biholomorphism $\iota \circ \alpha \circ \iota$ conjugates the parabolic germ of $f_{c^*}^{\circ 2n}=\iota \circ f_c^{\circ 2n}\circ \iota$ at $z_{c^*}$ to the same real parabolic germ $h$ obtained above. It follows that the germ of $f_{c}^{\circ 2n}$ at $z_{c}$ and the germ of $f_{c^*}^{\circ 2n}$ at $z_{c^*}$ are conformally conjugate via $\eta :=\left(\iota \circ \alpha \circ \iota\right)^{-1}\circ\alpha$, and $\eta$ preserves the corresponding dynamically marked critical orbits (here we have used the fact that $c$ is a critical {\'E}calle height $0$ parameter). Choosing an extended attracting Fatou coordinate $\psi^{\mathrm{att}}_{c^*}$ for $f_{c^*}^{\circ n}$ at $z_{c^*}$ (normalized so that the attracting equator maps to the real line), we can find an extended attracting Fatou coordinate $\psi^{\mathrm{att}}_{c}$ for $f_{c}^{\circ n}$ at $z_c$ such that $\psi^{\mathrm{att}}_{c}=\psi^{\mathrm{att}}_{c^*}\circ \eta$ in their common domain of definition. Moreover, by our construction of $\eta$, we have that $\Pi(\psi^{\mathrm{att}}_{c}(c))= \Pi(\psi^{\mathrm{att}}_{c^*}(c^*)).$ It now follows from (Case 1 of the proof of) Theorem~\ref{recovering_Anti-polynomials} that $c^*=\omega^j c$ for some $j\in \lbrace 0, 1, \cdots, d\rbrace$, where $\omega=\exp(\frac{2\pi i}{d+1})$. Therefore, $f_c$ commutes with the global anti-holomorphic involution $\zeta \mapsto \omega^{-j} \zeta^*$.
\end{proof}
\begin{remark}
It follows from the proof of the above theorem that if an odd period non-cusp parabolic parameter of $\mathcal{M}_d^*$ has a real-symmetric parabolic germ, then it must be a critical {\'E}calle height $0$ parameter. This is another example where a global feature of the dynamics can be read off from its local properties.
\end{remark}

 
\end{document}